\title{Equitable 2-partitions of the Johnson graphs $J(n,3)$}
\author{
	Rhys~J.~Evans\\
	\small Sobolev Institute of Mathematics\\[-0.8ex]
	\small Siberian Branch of the Russian Academy of Sciences\\[-0.8ex]
	\small 4 Acad. Koptyug avenue\\[-0.8ex]
	\small 630090 Novosibirsk, Russia\\[-0.8ex]
	\small\tt rhysjevans00@gmail.com 
	\and
	Alexander~L.~Gavrilyuk\\
	\small Interdisciplinary Faculty of Science and Engineering\\[-0.8ex]
	\small Shimane University, Matsue, Japan\\[-0.8ex]
	\small\tt gavrilyuk@riko.shimane-u.ac.jp
	\and
	Sergey~Goryainov\\
	\small School of Mathematical Sciences \\[-0.8ex]
	\small Hebei Key Laboratory of Computational Mathematics and Applications\\[-0.8ex]
	\small Hebei Normal University\\[-0.8ex]
	\small Shijiazhuang  050024\\[-0.8ex]
	\small P.R. China\\[-0.8ex]
	\small\tt sergey.goryainov3@gmail.com 
	\and
	Konstantin~Vorob'ev\\
	\small Sobolev Institute of Mathematics\\[-0.8ex]
	\small Siberian Branch of the Russian Academy of Sciences\\[-0.8ex]
	\small 4 Acad. Koptyug avenue\\[-0.8ex]
	\small 630090 Novosibirsk, Russia\\[-0.4ex]
	\small Institute of Mathematics and Informatics\\[-0.8ex]
	\small Bulgarian Academy of Sciences\\[-0.8ex]
	\small 8 Acad G. Bonchev str.\\[-0.8ex]
	\small 1113 Sofia, Bulgaria\\[-0.8ex]
	\small\tt konstantin.vorobev@gmail.com\\
}
\date{\today}
\newtheorem{thm}{Theorem}
\newtheorem{lemma}[thm]{Lemma}
\newtheorem{cor}[thm]{Corollary}
\numberwithin{thm}{section}
\begin{document}

\maketitle

\begin{abstract}
	We finish the classification of equitable 2-partitions of the Johnson graphs of diameter 3, $J(n,3)$, for $n>10$. 
\end{abstract}

\section{Introduction}

The relationship between association schemes and codes was the topic of the thesis of Delsarte \cite{D_1975}. Motivated by previous authors, Delsarte puts a particular emphasis on the Hamming and Johnson schemes. He makes a comment \cite[pg 55]{D_1975}, suggesting that there does not exist any non-trivial perfect codes in the Johnson graphs.

Martin \cite{M_1992} expands on the work of Delsarte by studying completely regular subsets in detail. In his work, Martin presents the relation between perfect codes and equitable $2$-partitions more explicitly. In the literature, many substructures of regular graphs correspond to or are equivalent to equitable $2$-partitions (e.g., see \cite{N_1982}, \cite{BCGG_2019}, \cite{Ihringer}). 
Equitable $2$-partitions have been studied for several families of graphs, such as in the hypercubes by Fon-Der-Flaass \cite{F_2007}. Due to Delsarte's original work, there is continued interest in the Hamming graphs (e.g. \cite{MV_2020}) and Johnson graphs (e.g. \cite{M_2007} \cite{GG_2013}). 

Any given equitable 2-partition of a graph is naturally associated with a non-principal eigenvalue $\theta$ of the graph, and such a partition is called $\theta$-equitable. As mentioned in \cite{GG_2013}, for each integer $k\geq 2$ the $\theta$-equitable 2-partitions of $J(n,k)$ associated to the second largest and smallest eigenvalues of $J(n,k)$ have been characterised in Meyerowitz \cite{M_2003} and Martin \cite{M_1994}, respectively. In particular, for $k=3$ the third largest eigenvalue of $J(n,3)$ is only eigenvalue for which the $\theta$-equitable 2-partitions of $J(n,3)$ are not fully characterised. For $k>3$ and $n\geq2k$, all $\theta$-equitable 2-partitions of $J(n,k)$ associated to the third largest eigenvalue of $J(n,k)$ are characterised in an unpublished work of Vorob'ev \cite{V_2020}. 

In this paper, we work on the next open case of equitable 2-partitions of the graphs $J(n,3)$ associated with $\lambda_2$, the third largest eigenvalue of $J(n,3)$. In fact, we present two distinct methods to classify such equitable partitions, and characterise the $\lambda_2$-equitable 2-partitions of $J(n,3)$ for all $n>10$. After the preliminary Section \ref{sec:Jn3Def}, we present the known $\lambda_2$-equitable 2-partitions.

In Section \ref{sec:PartialDiff} we start by analysing certain eigenfunctions in the Johnson graph. The results we obtain are then used to show that for all $n\geq10$, any $\lambda_2$-equitable 2-partition is found via one of the constructions of Section \ref{sec:knJn3}.  
	
In Section \ref{sec:Jn3t2} we give an alternative approach to the classification problem. We first introduce a combinatorial tool which enables us to analyse the local structure of $\theta$-equitable 2-partitions, which has been used previously by Gavrilyuk and Goryainov \cite{GG_2013}. In Sections \ref{sec:Jt2} and \ref{sec:largen} we apply this tool to find restrictions on the local structure of a given partition. We use these results in Section \ref{sec:ng14} to show that for all $n>14$, any $\lambda_2$-equitable 2-partition is found via one of the constructions of Section \ref{sec:knJn3}.

\section{Preliminaries}\label{sec:Jn3Def}

In this section we introduce the Johnson graphs $J(n,3)$ and equitable partitions.

For a positive integer $p$, we define $\left[p\right]:=\{1,\dots,p\}$. For positive integers $p,q$, the \emph{$p\times q$-lattice} is the graph with vertex set $\{(i,j):i\in \left[p\right],j\in \left[q\right]\}$, and two distinct vertices are joined by an edge precisely when they have the same value at one coordinate.

Let $n$ be an integer, $n\geq 6$. The \emph{Johnson graph} $J(n,3)$ has vertex set \linebreak
$\{K\subseteq \left[n\right]:|K|=3\},$ 
and distinct vertices $K,L$ are adjacent if and only if \linebreak$|K\cap L|=2$. Throughout this paper, the graph $\Gamma$ will be the Johnson graph $J(n,3)$, where the value of $n$ will be specified in advance. For any triple of distinct elements $a,b,c \in \left[n\right]$, let $abc$ denote the set $\{a,b,c\}$. For distinct elements $i,j \in \left[n\right]$, denote by $ij\ast$ the set of subsets of $\left[n\right]$ of size $3$ that contain both elements $i$ and $j$. Note that $ij\ast$ induces a clique of size $(n-2)$ in $J(n,3)$.

The Johnson graph $J(n,3)$ is a distance-regular graph with diameter $3$, 
and the eigenvalues of $J(n,3)$ are
\begin{eqnarray*}
	k &=& 3(n-3),\\
	\lambda_1 &=& 2n-9,\\
	\lambda_2 &=& n-7,\\ 
	\lambda_3 &=& -3.
\end{eqnarray*}
For more information on Johnson graphs, see Brouwer, Cohen and Neumaier \cite[Section 9.1]{BCN_1989}. It is known that the neighbourhood $\Gamma(x)$ of any vertex $x$ in $J(n,3)$ is isomorphic to the $3 \times (n-3)$-lattice. In particular, there  are three maximal cliques of size $n-3$ in the
neighbourhood of $abc$, given by $ab\ast$, $ac\ast$ and $bc\ast$, and $n-3$ maximal  cliques of size $3$, given by the triples $\{abi,aci,bci\}$, where $i\in \left[n\right] \setminus \{a,b,c\}$. The $ab$-\emph{row} of $\Gamma(abc)$ is the set $ab\ast \setminus abc$. For an element $i \in \left[n\right] \setminus \{a,b,c\}$, the $i$-\emph{column} of $\Gamma(abc)$ is the set $\{abi,aci,bci\}$, and $i$ is the \emph{index} of this column.

Let $\Delta$ be a graph, and  $\Pi=\{X_{1},...,X_{q}\}$ be a partition of the vertex set $V(\Delta)$ of $\Delta$. Then the sets $X_{i}$ are called the \emph{cells} of $\Pi$, and $\Pi$ is called a \emph{$q$-partition}. Let $A_{i,j}$ be the matrix $A(\Delta)$ restricted to the rows indexed by vertices in $X_{i}$, and columns indexed by vertices in $X_{j}$. Then there is an ordering of $V(\Delta)$ such that the adjacency matrix $A(\Delta)$ has the following block matrix form: 
$$A(\Delta)=\left(\begin{array}{ccc}
A_{1,1} & \cdots & A_{1,q}\\
\vdots & \ddots & \vdots\\
A_{q,1} & \cdots & A_{q,q}
\end{array}\right)$$

Let $b_{i,j}$ be the average row-sum of $A_{i,j}$. The matrix
$$A(\Delta \slash \Pi) =\left(\begin{array}{ccc}
b_{1,1} & \cdots & b_{1,q}\\
\vdots & \ddots & \vdots\\
b_{q,1} & \cdots & b_{q,q}
\end{array}\right)$$ 
is called the \emph{quotient matrix} of $A(\Delta)$ with respect to the partition $\Pi$. The partition $\Pi$ is \emph{equitable} if for each cell $X_{i}$ and every vertex $u\in X_{i}$ we have $|X_{j}\cap \Delta(u)|=b_{i,j}$ for every $j$.   

Every equitable 2-partition in a regular graph can be naturally associated to an eigenvalue of the graph.

\begin{lemma}\label{lem:theta}
	Let $\Pi$ be an equitable $2$-partition of a $k$-regular graph $\Delta$ with quotient matrix
	$$A(\Delta \slash \Pi) =\left(\begin{array}{cc}
	b_{1,1} & b_{1,2}\\
	b_{2,1} & b_{2,2}
	\end{array}\right)$$ 
	Then the eigenvalues of $A(\Delta \slash \Pi)$ are given by $k = b_{1,1} + b_{1,2} = b_{2,1} + b_{2,2}$ and $\theta = b_{1,1}-b_{2,1} = b_{2,2}-b_{1,2}$, where $\theta$ is an eigenvalue of $\Delta$, $\theta\not= k$.
\end{lemma}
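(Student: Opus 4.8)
The plan is to first use $k$-regularity to pin down the row sums of the quotient matrix, then read off both eigenvalues of the $2\times 2$ matrix directly, and finally invoke the standard ``lifting'' of a quotient eigenvector to an eigenvector of the full graph.

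First I would observe that for any vertex $u\in X_i$ the equitable condition gives $|X_1\cap\Delta(u)|+|X_2\cap\Delta(u)|=|\Delta(u)|=k$, so that $b_{i,1}+b_{i,2}=k$ for $i=1,2$. In particular both row sums of $A(\Delta\slash\Pi)$ equal $k$, so $(1,1)^{\mathsf T}$ is an eigenvector with eigenvalue $k$; since the matrix is $2\times 2$, the remaining eigenvalue is $\theta=\operatorname{tr}A(\Delta\slash\Pi)-k=b_{1,1}+b_{2,2}-k$. Substituting $k=b_{1,1}+b_{1,2}$ and $k=b_{2,1}+b_{2,2}$ into this expression yields the two claimed forms $\theta=b_{2,2}-b_{1,2}$ and $\theta=b_{1,1}-b_{2,1}$ respectively.

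The heart of the argument is showing that $\theta$ is an eigenvalue of $\Delta$ itself. I would take a nonzero eigenvector $v=(v_1,v_2)^{\mathsf T}$ of $A(\Delta\slash\Pi)$ for $\theta$ and lift it to the function $\hat v$ on $V(\Delta)$ that is constant equal to $v_i$ on each cell $X_i$. Then for $u\in X_i$,
\[
(A(\Delta)\hat v)(u)=\sum_{x\in\Delta(u)}\hat v(x)=\sum_{j}|X_j\cap\Delta(u)|\,v_j=\sum_j b_{i,j}v_j=\theta v_i=\theta\hat v(u),
\]
where the crucial third equality is exactly the equitable condition. Since $v\neq 0$ and both cells are nonempty, the lift $\hat v$ is a nonzero vector, so $\theta$ is an eigenvalue of $A(\Delta)$ with eigenfunction $\hat v$.

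Finally, to see $\theta\neq k$, I note that $\theta=k$ would force $b_{1,2}+b_{2,1}=0$, i.e.\ $b_{1,2}=b_{2,1}=0$ by nonnegativity, meaning there are no edges between the two cells; as $\Delta=J(n,3)$ is connected and both cells are nonempty, this cannot occur, so $\theta<k$. I do not expect a genuine obstacle here: the lifting computation is the only real content, and it is a short direct verification. The one subtlety to keep in mind is ensuring the lifted vector is genuinely nonzero, which is precisely why nonemptiness of the cells is invoked.
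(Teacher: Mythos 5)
Your proof is correct and follows essentially the same route the paper intends: the paper's one-line proof simply cites the routine $2\times 2$ eigenvalue computation and the standard fact that the quotient eigenvalues of an equitable partition are eigenvalues of the graph, and your lifting of a quotient eigenvector to a cell-wise constant eigenfunction is exactly the standard proof of that cited fact. The only small remark is that for $\theta\neq k$ you appeal to connectedness of $\Delta=J(n,3)$ even though the lemma is stated for a general $k$-regular graph; this is a hypothesis the paper leaves implicit, and your argument correctly identifies that some such assumption (connectedness, or at least $b_{1,2}+b_{2,1}>0$) is needed.
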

\begin{proof}
	This is a simple application of eigenvalue interlacing of quotient matrices and a routine calculation of the eigenvalues of a 2x2 matrix (see, e.g., \cite{GG_2013}).
\end{proof}

Let $\Delta$ be a $k$-regular graph and $\theta$ be a real number, $\theta\not=k$. An equitable $2$-partition $\Pi$ of $\Delta$ is \emph{$\theta$-equitable} if $A(\Delta \slash \Pi)$ has eigenvalue $\theta$ . By Lemma \ref{lem:theta}, the quotient matrix of an equitable $2$-partition $\Pi$ has eigenvalues $k$ and $\theta$, where $\theta$ is an eigenvalue of $\Delta$. Therefore, we can enumerate equitable 2-partitions of a regular graph by enumerating $\theta$-equitable $2$-partitions for each eigenvalue $\theta$ of $\Gamma$.   

\section{Known equitable 2-partitions in $J(n,3)$}\label{sec:knJn3}

The $\theta$-equitable $2$-partitions of $J(n,3)$ have been classified for the eigenvalues $\theta=\lambda_1$ and $\lambda_3$. For more references and information on these partitions, see \cite{GG_2013}. The open case corresponds to the eigenvalue $\lambda_2$.

An ad hoc analysis of the classification problem for small values of $n$ can be found in  Mogilnykh \cite{M_2007} and Avgustinovich and Mogilynkh \cite{AM_2008,AM_2011}. For example, they find all of the possible quotient matrices for an equitable 2-partition in $J(8,3)$. Avgustinovich and Mogilynkh give many constructions, but there were no full classification results for $\lambda_2$-equitable 2-partitions in $J(n,3)$ for any $n>6$ before the current paper.

In \cite{AM_2011}, an equitable $3$-partition of $J(2m,3)$ was constructed for all $m\geq 3$. This construction was used to produce three families of $\lambda_2$-equitable $2$-partitions of $J(2m,3)$. Here we give a detailed presentation of this construction.

Let $U = \{u_1,\dots,u_m\}$ and 
$W = \{w_1,\dots,w_m\}$ be sets of integers such that $U \cup W = \left[2m\right]$ (i.e. $U$ and $W$ partition the set $\left[2m\right]$). Let $\Delta_{\{U,W\}}$ be the graph with vertices $U\cup W$ and edge set $\{u_iw_j:i\not=j\}$. In other words, $\Delta_{\{U,W\}}$  is constructed by taking the complete bipartite graph with parts $U,W$, and then removing the edges  $u_1w_1,u_2w_2,\dots,u_mw_m$.  
\begin{figure}[ht!]
	\centering
	\includegraphics[scale=0.30]{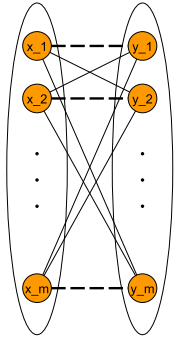}
	\caption[The graph $\Delta_{\{U,W\}}$]{The graph $\Delta_{\{U,W\}}$.}	
\end{figure}

There are three ``types'' of unordered triples of vertices in $\Delta_{\{U,W\}}$, each of which are illustrated in Figure \ref{fig:delUW}.
\begin{figure}
	\centering
	\includegraphics[scale=0.30]{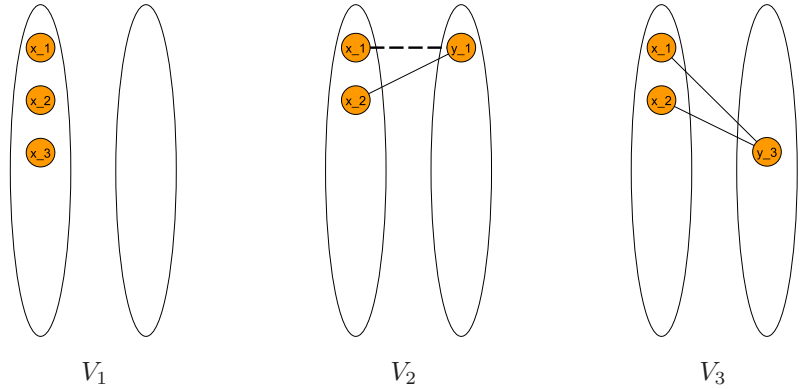}
	
	$V_1$~~~~~~~~~~~~~~~~~~~~~~~~~~~~~~~~$V_2$~~~~~~~~~~~~~~~~~~~~~~~~~~~~~~~~$V_3$				
	\caption[Triples of vertices in $\Delta_{\{U,W\}}$]{Triples of vertices in $\Delta_{\{U,W\}}$}\label{fig:delUW}
\end{figure}
Any set of three distinct vertices $abc \subseteq \left[2m\right]$ lies in one of the following sets:
\begin{eqnarray*}
	X_1&=&\{u_iu_ju_k:i,j,k\text{ distinct} \}\cup\{w_iw_jw_k:i,j,k\text{ distinct}\}.\\
	X_2&=&\{u_iu_jw_i:i,j\text{ distinct}\} \cup \{w_iw_ju_i:i,j\text{ distinct}\}.\\
	X_3&=&\{u_iu_jw_k:i,j,k \text{ distinct}\}\cup\{w_iw_ju_k:i,j,k \text{ distinct} \}.
\end{eqnarray*}
Now we regard the triples of vertices of $\Delta_{\{U,W\}}$ as the vertices of $J(2m,3)$. The partition $\Pi=\{X_1,X_2,X_3\}$ gives a 3-partition of the vertex set of $J(2m,3)$.

Consider $\Gamma:=J(2m,3)$ and vertices $t_1=u_1u_2u_3,t_2=u_1u_2w_1,t_3=u_1u_2w_3$, so $t_i \in X_i$ for each $i$. The following $3\times(2m-3)$-arrays represent the $3\times(2m-3)$-grids induced by $\Gamma(t_1)$, $\Gamma(t_2)$ and $\Gamma(t_3)$. The indexes of columns are given below the braces. The rows are given by the comments on the right. The entries are from the set $\{1,2,3\}$, and an entry equals to $j$ whenever the corresponding vertex belongs to the cell $X_j$.

\begin{center}
	$
	\begin{array}{c@{}c@{}@{}c@{}@{}c@{}@{}c@{}@{}c@{}@{}c@{}}
	\Gamma(t_1): ~~&
	~
	\underbrace{\begin{array}{@{}c@{}@{}c@{}@{}c@{}}  
		1 & ~~\ldots~~ & 1 \\
		1 & ~~\ldots~~ & 1 \\
		1 & ~~\ldots~~ & 1
		\end{array}}_{U\setminus \{u_1,u_2,u_3\}}
	& ~
	\underbrace{\begin{array}{@{}c@{}@{}c@{}@{}c@{}}  
		3 & ~~\ldots~~ & 3 \\
		3 & ~~\ldots~~ & 3 \\
		3 & ~~\ldots~~ & 3
		\end{array}}_{W\setminus \{w_1,w_2,w_3\}}
	& 
	\underbrace{\begin{array}{@{}c@{}}  
		2 \\								
		2 \\
		3 
		\end{array}}_{w_1}
	& ~
	\underbrace{\begin{array}{@{}c@{}}  
		2 \\
		3 \\
		2 
		\end{array}}_{w_2}
	& ~
	\underbrace{\begin{array}{@{}c@{}}  
		3 \\
		2 \\
		2 
		\end{array}}_{w_3}
	& ~
	\begin{array}{@{}c@{}}  
	~\leftarrow\text{$u_1u_2$-row} \\
	~\leftarrow\text{$u_1u_3$-row} \\
	~\leftarrow\text{$u_2u_3$-row} 
	\end{array}
	\end{array}
	\bigskip
	$
	$
	\begin{array}{c@{}c@{}@{}c@{}@{}c@{}@{}c@{}@{}c@{}@{}c@{}}
	\Gamma(t_2): ~~&
	~
	\underbrace{\begin{array}{@{}c@{}@{}c@{}@{}c@{}}  
		1 & ~~\ldots~~ & 1 \\
		2 & ~~\ldots~~ & 2 \\
		3 & ~~\ldots~~ & 3
		\end{array} }_{U\setminus \{u_1,u_2\}}
	& ~
	\underbrace{\begin{array}{@{}c@{}@{}c@{}@{}c@{}}  
		3 & ~~\ldots~~ & 3 \\
		2 & ~~\ldots~~ & 2 \\
		3 & ~~\ldots~~ & 3
		\end{array}}_{W\setminus \{w_1,w_2\}}
	& ~
	\underbrace{\begin{array}{@{}c@{}}  
		2 \\
		2 \\
		2 
		\end{array}}_{w_2}
	& ~
	\begin{array}{@{}c@{}}  
	~\leftarrow\text{$u_1u_2$-row} \\
	~\leftarrow\text{$u_1w_1$-row} \\
	~\leftarrow\text{$u_2w_1$-row} 
	\end{array}
	\end{array}
	\bigskip
	$
	$
	\begin{array}{c@{}c@{}@{}c@{}@{}c@{}@{}c@{}@{}c@{}@{}c@{}}
	\Gamma(t_3): ~~&
	~
	\underbrace{\begin{array}{@{}c@{}@{}c@{}@{}c@{}}  
		1 & ~~\ldots~~ & 1 \\
		3 & ~~\ldots~~ & 3 \\
		3 & ~~\ldots~~ & 3
		\end{array} }_{U\setminus \{u_1,u_2,u_3\}}
	& ~
	\underbrace{\begin{array}{@{}c@{}@{}c@{}@{}c@{}}  
		3 & ~~\ldots~~ & 3 \\
		3 & ~~\ldots~~ & 3 \\
		3 & ~~\ldots~~ & 3
		\end{array}}_{W\setminus \{w_1,w_2,w_3\}}
	& 
	\underbrace{\begin{array}{@{}c@{}}  
		1 \\
		2 \\
		2 
		\end{array}}_{u_3}
	& ~
	\underbrace{\begin{array}{@{}c@{}}  
		2 \\
		2 \\
		3 
		\end{array}}_{w_1}
	& ~
	\underbrace{\begin{array}{@{}c@{}}  
		2 \\
		3 \\
		2 
		\end{array}}_{w_2}
	& ~
	\begin{array}{@{}c@{}}  
	\leftarrow\text{$u_1u_2$-row} \\
	~\leftarrow\text{$u_1w_3$-row} \\
	~\leftarrow\text{$u_2w_3$-row} 
	\end{array}									\end{array}
	$
\end{center}

With the knowledge of these neighbourhoods and the symmetry of the graph $\Delta_{\{U,W\}}$, we deduce that $\Pi=\{X_1,X_2,X_3\}$ is an equitable $3$-partition of $J(2m,3)$.

\begin{lemma}								Let $\Pi=\{X_1,X_2,X_3\}$ be the partition of the vertices of $\Gamma=J(2m,3)$ defined above. Then $\Pi$ is equitable, and has quotient matrix 
	$$
	A(\Gamma / \Pi) =
	\left(
	\begin{array}{ccc}
	3m-9 & 6 & 3m-6  \\
	m-2 & 2m-1& 3m-6 \\
	m-2 & 6 & 5m-13
	\end{array}
	\right).
	$$
\end{lemma}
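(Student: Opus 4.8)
The plan is to exploit the symmetry of the construction to reduce the verification of equitability to a finite local computation at one representative vertex per cell, and then to read off the three rows of the quotient matrix directly from the neighbourhood lattices displayed above.

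First I would set up the symmetry. Every permutation $\sigma$ of $[m]$ induces a permutation of $[2m]$ via $u_i \mapsto u_{\sigma(i)}$ and $w_i \mapsto w_{\sigma(i)}$; this preserves adjacency in $\Delta_{\{U,W\}}$ (since $u_iw_j$ is an edge if and only if $i\neq j$), hence induces an automorphism of $\Gamma=J(2m,3)$. The involution swapping $U$ and $W$ by $u_i\mapsto w_i$, $w_i\mapsto u_i$ does likewise. Because the defining conditions of $X_1,X_2,X_3$ are stated symmetrically in $U$ and $W$ and depend only on the pattern of index coincidences among the three elements, each such automorphism \emph{preserves} the partition $\Pi$, mapping every cell onto itself. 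I would then check that the resulting group acts transitively on each cell: on $X_1$ the index permutations move any monochromatic triple to any other of the same colour while the swap exchanges the two colours; on $X_2$ and $X_3$ the same permutations realise every admissible coincidence pattern $u_iu_jw_i$, respectively $u_iu_jw_k$ with $i,j,k$ distinct. Since any automorphism $g$ fixing each cell restricts to a bijection $\Gamma(v)\cap X_t \to \Gamma(g(v))\cap X_t$, the quantity $|\Gamma(v)\cap X_t|$ is constant as $v$ ranges over a cell $X_s$. Consequently it suffices to evaluate $|\Gamma(t_s)\cap X_t|$ at the three representatives $t_1=u_1u_2u_3$, $t_2=u_1u_2w_1$, $t_3=u_1u_2w_3$.

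The local computation is then a direct bookkeeping exercise on the $3\times(2m-3)$-lattice $\Gamma(abc)$, whose cells are the triples $\{abi,aci,bci\}$ indexed by a choice of retained pair (the row) and a new element $i\in[2m]\setminus\{a,b,c\}$ (the column). For each neighbour I would determine its cell by inspecting the colours and index coincidences of its three elements; this reproduces exactly the entry tables shown above, where the only columns needing individual attention are the special columns $w_1,w_2,w_3$ and $u_3$, the remaining columns splitting into the two uniform blocks over $U$ and $W$. Counting the occurrences of the labels $1,2,3$ in each table then yields the rows $(3m-9,\,6,\,3m-6)$, $(m-2,\,2m-1,\,3m-6)$ and $(m-2,\,6,\,5m-13)$ of the stated quotient matrix; as a consistency check, each row sums to $3(2m-3)=6m-9=k$, the valency of $\Gamma$.

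I expect the main obstacle to be the careful case analysis underlying the entry tables: one must correctly classify, for every row–column pair in each of the three neighbourhoods, whether the resulting triple is monochromatic, has a matching odd index, or has a new odd index, and in particular handle the boundary columns where a retained element and the new element share a part or an index. The symmetry reduction is conceptually the crucial ingredient, but it requires (mild) care to confirm that both the $U$–$W$ swap and the index permutations preserve each $X_s$ rather than merely permuting the cells among themselves; this is exactly where the symmetric form of the definitions of $X_1,X_2,X_3$ is used. Everything else is routine arithmetic.
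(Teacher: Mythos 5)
Your proposal is correct and follows essentially the same route as the paper: the paper's proof likewise observes that for any vertex of $X_j$ there is a permutation of $\left[2m\right]$ carrying its neighbourhood array to that of the representative $t_j$, and then reads the quotient matrix off the three displayed arrays. Your version merely makes explicit the group of partition-preserving permutations and its transitivity on each cell, which the paper leaves implicit.
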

\begin{proof}
	Any vertex $abc\in \Gamma$ lies in $X_j$ for exactly one value $j\in \{1,2,3\}$. Then there is a permutation of $\left[2m\right]$ such that the $3\times (2m-3)$-array of $\Gamma(abc)$ is the array of $t_j$ in the above. This observation shows that the partition $\Pi$ is equitable, and we can use the arrays of the neighbourhoods to determine the quotient matrix.
\end{proof}

Now we construct three different equitable $2$-partitions by merging the cells of the partition $\Pi$. Let $\Pi_1 := \{X_2 \cup X_3,X_1\}$, $\Pi_2 := \{X_1 \cup X_3,X_2\}$ and $\Pi_3 = \{X_3,X_1 \cup X_2\}$. 

\begin{lemma}\label{thm:knJn3}
	Let $\Pi_j$ be the partitions of the vertices of $\Gamma=J(2m,3)$ defined above. Then the partitions $\Pi_j$ are equitable, and have quotient matrices 
	$$
	A(\Gamma / \Pi_1) = 
	\left(
	\begin{array}{cc}
	5m - 7 & m-2 \\
	3m & 3m-9
	\end{array}
	\right),
	$$
	$$
	A(\Gamma / \Pi_2) = 
	\left(
	\begin{array}{cc}
	6m - 15 & 6 \\
	4m-8 & 2m-1
	\end{array}
	\right),$$
	$$
	A(\Gamma / \Pi_3) = 
	\left(
	\begin{array}{cc}
	5m - 13 & m+4\\
	3m-6 & 3m-3
	\end{array}
	\right).$$
\end{lemma}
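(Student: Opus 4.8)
The plan is to derive the equitability of each $\Pi_j$ directly from the equitability of $\Pi=\{X_1,X_2,X_3\}$, which has already been established together with its quotient matrix $A(\Gamma/\Pi)$. The governing principle is the elementary merging criterion for equitable partitions, which I would state and verify straight from the definition: if $\Pi$ is equitable and $\Pi'$ is a coarsening obtained by merging its cells into blocks, then $\Pi'$ is again equitable provided that whenever two original cells $X_i,X_{i'}$ lie in the same block of $\Pi'$, they have equal row sums into every block of $\Pi'$, that is $\sum_{l\in S}b_{i,l}=\sum_{l\in S}b_{i',l}$ for each block $\bigcup_{l\in S}X_l$. Each $\Pi_j$ merges exactly two of the three cells of $\Pi$ and keeps the third as a singleton block, so there is only one pair of cells to test in each case.

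First I would record the simplification coming from regularity. Since $\Gamma=J(2m,3)$ is $k$-regular with $k=3(2m-3)=6m-9$, every row of $A(\Gamma/\Pi)$ sums to $k$. Hence, for a two-block coarsening, once the two constituent cells of the non-singleton block agree on their number of neighbours in the singleton block, they automatically agree on their number of neighbours inside the merged block as well, both being $k$ minus the common value. Thus for each $\Pi_j$ it suffices to verify a single column equality in the $3\times 3$ quotient matrix.

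Next I would read these equalities off the matrix. For $\Pi_1=\{X_2\cup X_3,X_1\}$ the merged block is $X_2\cup X_3$ and the check is $b_{2,1}=b_{3,1}=m-2$; for $\Pi_2=\{X_1\cup X_3,X_2\}$ the merged block is $X_1\cup X_3$ and the check is $b_{1,2}=b_{3,2}=6$; for $\Pi_3=\{X_3,X_1\cup X_2\}$ the merged block is $X_1\cup X_2$ and the check is $b_{1,3}=b_{2,3}=3m-6$. Each of these is precisely a repeated entry within a single column of $A(\Gamma/\Pi)$, so the criterion holds and all three partitions are equitable. I would then obtain each quotient matrix by summing the appropriate blocks: the diagonal entry of a merged block $X_i\cup X_{i'}$ is $b_{i,i}+b_{i,i'}$ (equivalently $b_{i',i}+b_{i',i'}$), the entry from the merged block to the singleton $X_l$ is the common value $b_{i,l}=b_{i',l}$, and the entry from $X_l$ to the merged block is $b_{l,i}+b_{l,i'}$. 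For example, in $\Pi_1$ this yields diagonal entry $b_{2,2}+b_{2,3}=5m-7$, merged-to-singleton entry $m-2$, singleton-to-merged entry $b_{1,2}+b_{1,3}=3m$, and singleton diagonal $b_{1,1}=3m-9$, reproducing $A(\Gamma/\Pi_1)$; the analogous sums give $A(\Gamma/\Pi_2)$ and $A(\Gamma/\Pi_3)$.

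There is no genuine analytic obstacle here; the only real care is bookkeeping, namely getting the ordering of the cells right in each $\Pi_j$ (in particular $\Pi_3$ lists the singleton $X_3$ first, so it occupies the top-left of the matrix) and summing the correct rows versus columns when forming off-diagonal entries. As a consistency check I would finally confirm that each $2\times 2$ quotient matrix has trace $8m-16$, hence second eigenvalue $\theta=(8m-16)-(6m-9)=2m-7=n-7=\lambda_2$, so that all three are indeed $\lambda_2$-equitable partitions.
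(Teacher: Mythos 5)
Your proof is correct and follows essentially the same route as the paper, which also observes that each column of $A(\Gamma/\Pi)$ has equal off-diagonal entries (citing \cite[Lemma 1]{AM_2011}) and merges cells accordingly; you merely spell out the merging criterion and the resulting arithmetic explicitly. All the computed entries and the final eigenvalue check $\theta=2m-7=\lambda_2$ agree with the stated quotient matrices.
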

\begin{proof}
	This follows from \cite[Lemma 1]{AM_2011}, which shows the quotient matrix $A(\Gamma / \Pi)$ has equal non-diagonal entries for each column. Thus, we can merge cells of the equitable 3-partition to get equitable 2-partitions. The quotient matrices also follow easily from this argument. 
\end{proof}

\section{Proof based on partial differences}\label{sec:PartialDiff}

In this section, we prove that for $n>10$, all $\lambda_2$-equitable $2$-partitions have the same structure and quotient matrices as in Lemma \ref{thm:knJn3}. 

Given a graph $\Delta$, a real-valued function $f\colon V(\Delta)\longrightarrow{\mathbb{R}}$ is called a {\em $\theta$--eigenfunction} of $G$ if the equality $$\theta\cdot f(x)=\sum_{y\in V(\Delta(x))}f(y)$$ \\
holds for any $x\in V(\Delta)$ and $f$ is not the all-zeros function. Equivalently, the vector of values of a $\theta$--eigenfunction is an eigenvector of the adjacency matrix of $\Delta$ with an eigenvalue $\theta$. 

Given a real-valued function $f$ defined on vertices of $J(n,w)$ and distinct $a,b\in [n]$, define a {\it partial difference $f_{ab}$} as follows. For every $(w-1)$-subset $y$ of $[n]\setminus \{a,b\}$, set $$f_{ab}(y)=f(y\cup \{a\})-f(y\cup \{b\}).$$
Clearly, $f_{ab}$ can be treated as a function defined on vertices of $J(n-2,w-1)$. Moreover, there is a useful connection between a function and its partial difference. Recall the spectra of the Johnson graph $J(n,w)$ consists of $\lambda_{i}(n,w)=(w-i)(n-w-i)-i$, $i\in \{0,1, \dots, w\}$ (see \cite[Section~9.1]{BCN_1989}).

\begin{lemma}\label{lem:partdif}(\cite{VMV_2018})
	Let $f$ be a $\lambda_{i}(n,w)$-eigenfunction of $J(n,w)$, $a$, $b$ $\in [n]$. Then $f_{ab}$ is a
	$\lambda_{i-1}(n-2,w-1)$-eigenfunction of $J(n-2,w-1)$ or the all-zeros function.  
\end{lemma}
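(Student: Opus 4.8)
The plan is to verify the eigenfunction equation for $f_{ab}$ directly from the defining relation of $f$, by choosing a convenient vertex in $J(n-2,w-1)$ and expanding its neighbour sum. Fix a $(w-1)$-subset $y\subseteq[n]\setminus\{a,b\}$, which is a vertex of $J(n-2,w-1)$. I would begin by writing down $\sum_{z}f_{ab}(z)$, where $z$ ranges over the neighbours of $y$ in $J(n-2,w-1)$, and then substitute $f_{ab}(z)=f(z\cup\{a\})-f(z\cup\{b\})$. The key observation is that a neighbour $z$ of $y$ in $J(n-2,w-1)$ is obtained by swapping one element of $y$ for an element of $([n]\setminus\{a,b\})\setminus y$; the resulting sets $z\cup\{a\}$ and $z\cup\{b\}$ are then related to neighbours of $y\cup\{a\}$ and $y\cup\{b\}$ in the larger graph $J(n,w)$.

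The main step is a careful bookkeeping of the neighbours of $y\cup\{a\}$ in $J(n,w)$, sorted according to whether $a$ is retained or swapped out. I would partition the neighbour set of $x:=y\cup\{a\}$ in $J(n,w)$ into three classes: those obtained by swapping an element of $y$ for another element of $[n]\setminus(y\cup\{a\})$ while keeping $a$; those obtained by swapping $a$ for $b$; and those obtained by swapping $a$ for some element $c\notin\{b\}\cup y$. Writing out the eigenvalue equation $\lambda_i(n,w)f(y\cup\{a\})=\sum_{x'\sim x}f(x')$ and the analogous equation for $y\cup\{b\}$, I would subtract the two. The $a\leftrightarrow b$ swap terms cancel or combine neatly, the ``keep the new element'' terms reassemble into $\sum_z f_{ab}(z)$ over neighbours $z$ of $y$ in $J(n-2,w-1)$, and the remaining terms form partial differences $f_{ab}$ evaluated at $y$ itself, contributing a multiple of $f_{ab}(y)$.

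The hard part will be tracking the exact multiplicities so that the leftover diagonal contribution produces precisely the shift from $\lambda_i(n,w)$ to $\lambda_{i-1}(n-2,w-1)$. Using the explicit formula $\lambda_i(n,w)=(w-i)(n-w-i)-i$, one computes
\[
\lambda_i(n,w)-\lambda_{i-1}(n-2,w-1)
=(w-i)(n-w-i)-i-\bigl[(w-i)(n-w-i)-(i-1)\bigr]=-1,
\]
so the discrepancy between the two eigenvalues is a single unit, and I would need the combinatorial count of the ``swap $a$ (resp.\ $b$) for a third element $c$'' terms to supply exactly this correction. Concretely, the neighbours of $y\cup\{a\}$ that drop $a$ but land outside $y\cup\{b\}$ pair up with the corresponding neighbours of $y\cup\{b\}$ dropping $b$, and their difference reorganises into $-f_{ab}(y)$ plus a telescoping sum, yielding the required $\lambda_{i-1}(n-2,w-1)f_{ab}(y)$.

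Finally I would address the degenerate outcome: the computation shows $f_{ab}$ satisfies the eigenfunction equation for $\lambda_{i-1}(n-2,w-1)$ identically, but $f_{ab}$ may be the all-zeros function (for instance when $f$ is symmetric under interchanging $a$ and $b$). Since an eigenfunction is by definition nonzero, the statement must allow this exceptional case, which is exactly why the lemma is phrased as ``$f_{ab}$ is a $\lambda_{i-1}(n-2,w-1)$-eigenfunction \emph{or} the all-zeros function.'' The cleanest writeup would therefore establish the linear relation without assuming $f_{ab}\neq 0$, and only afterwards note the dichotomy. I expect the neighbour-counting in the middle step to be the principal source of index errors, so I would organise it by fixing notation for the three swap types before any summation is carried out.
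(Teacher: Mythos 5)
The paper does not prove this lemma at all: it is quoted verbatim from \cite{VMV_2018}, so there is no internal argument to compare against. Your direct verification is the right way to supply one, and it does go through. The clean bookkeeping is: the neighbours of $y\cup\{a\}$ in $J(n,w)$ split into (i) sets $z\cup\{a\}$ with $z$ a neighbour of $y$ in $J(n-2,w-1)$, (ii) sets $(y\setminus\{u\})\cup\{a,b\}$ for $u\in y$, (iii) the single set $y\cup\{b\}$, and (iv) sets $y\cup\{v\}$ with $v\notin y\cup\{a,b\}$; the neighbours of $y\cup\{b\}$ decompose symmetrically, with classes (ii) and (iv) producing \emph{identical} sets. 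Subtracting the two eigenvalue equations, (ii) and (iv) cancel outright, (i) assembles into $\sum_{z\sim y}f_{ab}(z)$, and (iii) contributes $f(y\cup\{b\})-f(y\cup\{a\})=-f_{ab}(y)$, giving
\begin{equation*}
\sum_{z\sim y}f_{ab}(z)=\bigl(\lambda_i(n,w)+1\bigr)f_{ab}(y)=\lambda_{i-1}(n-2,w-1)\,f_{ab}(y),
\end{equation*}
consistent with your computation that the two eigenvalues differ by $1$. Two points in your plan need tightening when you write this out. First, your class ``swap an element of $y$ for something outside, keeping $a$'' must be split according to whether the incoming element is $b$: when it is, the resulting set contains both $a$ and $b$, is not of the form $z\cup\{a\}$ for a vertex $z$ of $J(n-2,w-1)$, and must instead be cancelled against its twin from the $y\cup\{b\}$ side. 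Second, you attribute the unit correction to the ``swap $a$ (resp.\ $b$) for a third element $c$'' terms; those in fact cancel exactly, and the $-f_{ab}(y)$ comes solely from the single $a\leftrightarrow b$ swap. Neither issue breaks the argument, and your handling of the all-zeros alternative is exactly right.
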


The following lemma shows that all-zeros partial differences impose additional restrictions on the values of the initial function.  

\begin{lemma}\label{L:zero_part_diff}(\cite{VMV_2018})
	Let $f$ be a real-valued function defined on vertices of $J(n,w)$. Suppose that $f_{ab}\equiv 0$ and $f_{ac}\equiv 0$ for some pairwise distinct $a,b,c\in [n]$. Then $f_{bc}\equiv 0$.
\end{lemma}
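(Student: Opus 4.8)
The plan is to exploit the definition of the partial difference directly and combine the two hypotheses on a fixed $(w-1)$-subset. Concretely, the statements $f_{ab}\equiv 0$ and $f_{ac}\equiv 0$ say that for every valid $(w-1)$-subset $y$ we have $f(y\cup\{a\})=f(y\cup\{b\})$ and $f(y\cup\{a\})=f(y\cup\{c\})$, and we want to conclude $f(z\cup\{b\})=f(z\cup\{c\})$ for every $(w-1)$-subset $z$ of $[n]\setminus\{b,c\}$. The obstacle is purely one of domains: the hypotheses are statements about subsets avoiding $\{a,b\}$ (resp.\ $\{a,c\}$), so to chain the two equalities I need a common $(w-1)$-set $y$ avoiding all of $a,b,c$ simultaneously, and I must also account for the case where the target subset $z$ already contains the element $a$, which the hypotheses do not directly reach.

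First I would handle the generic case. Fix a $(w-1)$-subset $z\subseteq[n]\setminus\{b,c\}$ and suppose $a\notin z$. Then $z$ avoids $a,b,c$, so I may apply $f_{ab}(z)=0$ to get $f(z\cup\{a\})=f(z\cup\{b\})$, and apply $f_{ac}(z)=0$ to get $f(z\cup\{a\})=f(z\cup\{c\})$. Subtracting (or simply transitivity) yields $f(z\cup\{b\})=f(z\cup\{c\})$, i.e.\ $f_{bc}(z)=0$. This disposes of every subset $z$ not containing $a$.

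The remaining and more delicate case is when $a\in z$. Write $z=z'\cup\{a\}$ where $z'$ is a $(w-2)$-subset of $[n]\setminus\{a,b,c\}$. I want $f(z'\cup\{a,b\})=f(z'\cup\{a,c\})$. Here the idea is to feed the $(w-1)$-sets $z'\cup\{b\}$ and $z'\cup\{c\}$, which \emph{do} avoid $a$, into the two hypotheses. Applying $f_{ab}\equiv 0$ to the subset $y=z'\cup\{c\}$ (valid, since $y$ avoids $a$ and $b$) gives $f(z'\cup\{c,a\})=f(z'\cup\{c,b\})$; applying $f_{ac}\equiv 0$ to the subset $y=z'\cup\{b\}$ (valid, since it avoids $a$ and $c$) gives $f(z'\cup\{b,a\})=f(z'\cup\{b,c\})$. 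Since $z'\cup\{b,c\}=z'\cup\{c,b\}$, the right-hand sides coincide, and I conclude $f(z'\cup\{a,c\})=f(z'\cup\{a,b\})$, which is exactly $f_{bc}(z)=0$ in this case. Combining the two cases shows $f_{bc}\equiv 0$, completing the argument. The whole proof is a short domain-bookkeeping exercise; the only subtlety to watch is checking in each application that the chosen $(w-1)$-subset lies in the admissible domain of the relevant partial difference.
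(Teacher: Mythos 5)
Your argument is correct and complete. Note that the paper itself gives no proof of this lemma --- it is quoted from \cite{VMV_2018} --- so there is no in-paper argument to compare against; your self-contained domain-splitting proof is exactly the kind of elementary verification one would expect. Both cases check out: when $a\notin z$ the two hypotheses chain through the common value $f(z\cup\{a\})$, and when $a\in z$ your choice of test sets $z'\cup\{c\}$ for $f_{ab}$ and $z'\cup\{b\}$ for $f_{ac}$ correctly lands both equalities on the common value $f(z'\cup\{b,c\})$. The only degenerate situation, $w=1$, is harmless since then every admissible $z$ is empty and the second case never arises.
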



Throughout the rest of this paper, we will let $X = \{X_1,X_2\}$, a $\theta$-equitable $2$-partition of $J(n,3)$. 
Let $\theta$ be a non-principal eigenvalue of $\Gamma=J(n,3)$ and suppose $X$ is a $\theta$-equitable $2$-partition with quotient matrix 
$$
A(\Gamma / X) =
\left(
\begin{array}{cc}
p_{11} & p_{12} \\
p_{21} & p_{22} \\
\end{array}
\right),
$$
where $p_{12}<p_{21}$ and  $p_{12}+p_{21}=2n-2$.

For a vertex $u\in V(\Gamma)$, the $X_1$\emph{-indicator function} on $u$ is
$$
\overline{u} :=
\left\{
\begin{array}{ll}
1 & \text{if }u \in X_1; \\
0 & \text{if }u \in X_2,
\end{array}
\right.
$$
and for a set of vertices $U\subseteq V(\Gamma)$ we define \begin{equation*}
\overline{U}:=\sum\limits_{u \in U}\overline{u}
\end{equation*}

Consider a function $f\colon V(\Gamma)\to \mathbb{R}$ given by  
$$f(x)= \begin{cases}
\frac{-p_{21}}{p_{12}+p_{21}}, x \in X_1\\
\frac{p_{12}}{p_{12}+p_{21}}, x \in X_2.
\end{cases}$$
By direct calculations one can easily check that $f$ is a $\lambda_2$-eigenfunction of $\Gamma$. 

The following proof of the main result is based on an analysis of the structure of partial differences of $f$, and we need one more auxiliary statement for it.

\begin{lemma}\label{L:Derivates}
	Let $f$ be $\lambda_1(n,2)$-eigenfunction of $J(n,2)$ taking values $\{-1,0,1\}$, $f\not\equiv 0$, $n\geq 5$. Then $f$ is equal to one of the following functions:
	\begin{enumerate}
		\item $f_1(x)= \begin{cases}
		1, &a\in x,\,b\notin x\\
		-1, &a\notin x,\,b\in x\\
		0,&\text{otherwise,} \end{cases}$ \\for some $a,b\in [n]$,

		\item $f_2(x)= \begin{cases}
		1, &x \subset M_1,\\
		-1, &x \subset M_2,\\
		0,&\text{otherwise,} \end{cases}$\\ for some $M_1\cup M_2=[n], |M_1|=|M_2|=\frac{n}{2}$, and even $n$.
		
	\end{enumerate}
\end{lemma}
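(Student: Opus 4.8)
The plan is to first pin down the structure of the $\lambda_1(n,2)$-eigenspace of $J(n,2)$ and then solve an elementary extremal problem on real numbers. For each $a\in[n]$ let $\phi_a$ be the indicator of the star $\{x : a\in x\}$, so that $\phi_a(x)=1$ if $a\in x$ and $0$ otherwise. A direct count of neighbours shows that the adjacency operator $A$ of $J(n,2)$ acts by $A\phi_a = 2\cdot\mathbf{1}+(n-4)\phi_a$, where $\mathbf{1}$ is the all-ones function: indeed $(A\phi_a)(x)$ counts the neighbours of $x$ containing $a$, of which there are $n-2$ when $a\in x$ and $2$ when $a\notin x$. Consequently $A(\phi_a-\phi_b)=(n-4)(\phi_a-\phi_b)$, so every difference $\phi_a-\phi_b$ is a $\lambda_1(n,2)=(n-4)$-eigenfunction, and more generally $\sum_a c_a\phi_a$ is such an eigenfunction exactly when $\sum_a c_a=0$. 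First I would check that the $\phi_a$ are linearly independent for $n\ge 3$ and invoke the standard fact that the $\lambda_1$-eigenspace of $J(n,2)$ has dimension $\binom{n}{1}-\binom{n}{0}=n-1$ (see \cite[Section~9.1]{BCN_1989}); comparing dimensions then shows that every $\lambda_1(n,2)$-eigenfunction has the normal form $f(\{i,j\})=c_i+c_j$ for reals $c_1,\dots,c_n$ with $\sum_i c_i=0$.

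With this normal form in hand, the lemma reduces to a purely combinatorial classification: determine all tuples $(c_1,\dots,c_n)$ with $\sum_i c_i=0$, not all zero, such that $c_i+c_j\in\{-1,0,1\}$ for every pair $i\ne j$. I would first note that the two target functions correspond precisely to two patterns: $f_1=\phi_a-\phi_b$ is the tuple with one entry $1$, one entry $-1$ and the rest $0$; while $f_2$ is the tuple with $n/2$ entries equal to $\tfrac12$ (on $M_1$) and $n/2$ entries equal to $-\tfrac12$ (on $M_2$). The key first reduction is that $(c_i)$ takes at most three distinct values: if it took four distinct values $v_1>v_2>v_3>v_4$, then $v_1+v_2>v_1+v_3>v_1+v_4>v_2+v_4>v_3+v_4$ would be five distinct pairwise sums, impossible since at most three sums are allowed.

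Next I would split on the number of distinct values. If there are exactly three distinct values $\alpha>\beta>\gamma$, their three pairwise sums $\alpha+\beta>\alpha+\gamma>\beta+\gamma$ must equal $1,0,-1$, and solving this linear system forces $\{\alpha,\beta,\gamma\}=\{1,0,-1\}$; since $1+1=2$ and $(-1)+(-1)=-2$ are forbidden, each of $1$ and $-1$ occurs exactly once, $\sum_i c_i=0$ then holds automatically, and we obtain $f_1$. If there are exactly two distinct values $\alpha>\beta$ with multiplicities $p,q$ (so $p+q=n$), then when $p,q\ge 2$ all three of $2\alpha>\alpha+\beta>2\beta$ occur and must equal $1,0,-1$, forcing $\alpha=\tfrac12,\beta=-\tfrac12$, whereupon $\sum_i c_i=0$ gives $p=q=n/2$, i.e.\ $f_2$ (with $n$ even). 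The only remaining possibility is a two-value tuple with a singleton, say $p=1$; here $\sum_i c_i=0$ gives $\alpha=-q\beta$, and combining $2\beta\in\{-1,0,1\}$ with $\alpha+\beta\in\{-1,0,1\}$ forces $\beta=-\tfrac12$, $q=3$, hence $n=4$. The one-value case gives $f\equiv0$, which is excluded.

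The genuine content, and the only place the hypothesis $n\ge 5$ is used, is exactly this last step: the sporadic two-value solution $(\tfrac32,-\tfrac12,-\tfrac12,-\tfrac12)$ and its negative are legitimate $\{-1,0,1\}$-valued eigenfunctions on $J(4,2)$, ruled out only because $n\ge 5$. The main obstacle is therefore not any single hard estimate, but rather making the normal-form step of the first paragraph fully rigorous — establishing that the functions $f(\{i,j\})=c_i+c_j$ exhaust the eigenspace — and then being exhaustive in the multiplicity bookkeeping so that no sporadic configuration is overlooked.
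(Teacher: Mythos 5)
Your proposal is correct and follows essentially the same route as the paper: both reduce to the normal form $f(\{i,j\})=c_i+c_j$ with $\sum_i c_i=0$ and then run the identical case analysis on the number of distinct values among the $c_i$, using $n\ge 5$ only to exclude the sporadic singleton-multiplicity solution at $n=4$. The one difference is that you justify the normal form by a direct computation with the star indicators $\phi_a$ plus a dimension count, whereas the paper cites Delsarte's inducing operator from $J(n,1)$ to $J(n,2)$; both are valid and yield the same parametrisation.
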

\begin{proof}
	The main idea is to use one-to-one correspondence between eigenspaces of different Johnson graphs. In particular, it is known (see, for example \cite{D_1975}), that every $\lambda_i(n,w)$-eigenfunction $h_1$ of the graph $J(n,w)$, $1 \leq i\leq w-1$ can be obtain from some $\lambda_i(n,i)$-eigenfunction $h_2$ of $J(n,i)$ by so-called inducing operator $$h_1(x)=\sum_{y\in V(J(n,i)), y\subseteq x}{h_2(y)},$$ where $V(J(n,i))$ is the set of vertices of $J(n,i)$. 
	
	{In our case it means that $f$ can be obtained from some $\lambda_1(n,1)$-eigenfunction $g$ of the graph $J(n,1)$. The function $g$ can be treated as one defined on numbers $1,2,\dots, n$, and $f$, in turn, as one defined on pairs of these numbers.}
	
	Let us denote $\gamma_i=g(i)$, consequently, for any distinct $a,b\in [n]$ we have $f(ab)=\gamma_{a}+\gamma_{b}$. Since $g$ is a $\lambda_1(n,1)$-eigenfunction it must be orthogonal to a constant function. Therefore, $\gamma_1+\gamma_2+\dots+\gamma_n=0$. 
	The next step is the determination of all sets $A=\{\gamma_1,\gamma_2,\dots \gamma_n\}$ such that the set $\{\gamma_i+\gamma_j|\, i,j\in [n],\, i\neq j\}$ does not contain any elements except $-1$, $0$ and $1$.
	
	Let $A$ contain at least four pairwise distinct values, for example $\gamma_1>\gamma_2>\gamma_3>\gamma_4$. It means that $\gamma_1+\gamma_2>\gamma_1+\gamma_3>\gamma_1+\gamma_4>\gamma_3+\gamma_4$ and $f$ takes more than three values. 
	
	Let $A$ contain exactly three pairwise distinct values $\gamma_1>\gamma_2>\gamma_3$. Consequently, $\gamma_1+\gamma_2=1$, $\gamma_1+\gamma_3=0$ and $\gamma_2+\gamma_3=-1$. Obviously, $\gamma_1>0$ and $\gamma_3<0$. It is easy to see that $A$ must contain only one element $\gamma_1$ and one $\gamma_3$. Otherwise, $f$ will take one more new value $2\gamma_1$ or $2\gamma_3$. So, $A$ consists of one $\gamma_1$, one $\gamma_3$ and $n-2$ elements $\gamma_2$. If $\gamma_2\neq 0$ we again find a new value $2\gamma_2$ of $f$. Finally, we conclude, that $\gamma_1=1$, $\gamma_2=0$ and $\gamma_3=-1$ and build the function $f_1$.
	
	The last possible case is when $A$ contains exactly two pairwise distinct values $\gamma_1$, $\gamma_2$, $\gamma_1>\gamma_2$. Let $n_1$ and $n_2$ be multiplicities of these elements in $A$. By our previous arguments we have that $n_1\gamma_1+n_2\gamma_2=0$ and $n_1+n_2=n$. Clearly, if $n_1=1$ then $f$ takes values $\gamma_1\frac{n-2}{n-1}$ and $-\gamma_1\frac{2}{n-1}$. For $n\geq 5$, these values cannot be equal to $1$ and $-1$. Similarly, $n_2\neq 1$ too. Hence, $f$ takes three values $2\gamma_1$, $\gamma_1+\gamma_2$ and $2\gamma_2$. Particularly, $\gamma_1+\gamma_2=0$. Therefore, $n_1=n_2$, $\gamma_1=\frac{1}{2}$, $\gamma_2=-\frac{1}{2}$. Finally, we construct the function $f_2$.    
\end{proof}

In the rest of the paper, functions equal to $f_1$ and $f_2$ will be called functions of type $1$ and $2$ respectively. We will say, that elements $a, b$  and sets $M_1, M_2$ are {\it defining} for the functions $f_1$ and $f_2$ respectively. The knowledge of structure of possible partial differences allows us to prove the following statement.

\begin{lemma}\label{L:Supports}
	Let $X=(X_1, X_2)$ be $\lambda_2$-equitable $2$-partition of $J(n,3)$, $n\geq 7$, with the quotient matrix $\left(\begin{array}{cc}
	p_{11} & p_{12}\\
	p_{21} & p_{22}
	\end{array}\right)$ and $g$ be a characteristic function of $X_1$. Suppose that the set $$\{g_{ij}|\, 1\leq i<j\leq n\}$$ contains $t_1$ and $t_2$ partial differences of types $1$ and $2$ correspondingly, and ${n \choose 2}-t_1-t_2$ all-zero differences. Then $$p_{12}p_{21}n(n-2)=24t_1(n-4)+3t_2(n-2)(n-4).$$ 
\end{lemma}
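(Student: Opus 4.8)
\noindent
The plan is to introduce the single quantity
\[
S \;:=\; \sum_{1\le i<j\le n}\;\sum_{y} g_{ij}(y)^2 ,
\]
where for fixed $i<j$ the inner sum runs over the $2$-subsets $y$ of $[n]\setminus\{i,j\}$, i.e.\ over the vertices of the graph $J(n-2,2)$ on which $g_{ij}$ is defined. I would then evaluate $S$ in two independent ways and equate the results.

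First I would pin down the structure of each $g_{ij}$. Since $g$ is constant on each cell of the equitable partition $X$, the span of the two cell--indicator functions is $A(\Gamma)$--invariant and two--dimensional, containing the all--ones ($k$--)eigenvector; hence subtracting from $g$ its constant mean value yields a $\lambda_2$--eigenfunction of $\Gamma=J(n,3)$. As a partial difference annihilates constants, $g_{ij}$ equals the corresponding partial difference of this $\lambda_2(n,3)$--eigenfunction, so by Lemma~\ref{lem:partdif} every $g_{ij}$ is a $\lambda_1(n-2,2)$--eigenfunction of $J(n-2,2)$ or the all--zeros function. Because $g$ takes values in $\{0,1\}$, each $g_{ij}$ takes values in $\{-1,0,1\}$, and since $n-2\ge 5$ Lemma~\ref{L:Derivates} forces every nonzero $g_{ij}$ to be of type $1$ or type $2$. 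Writing $\|h\|^2:=\sum_y h(y)^2$, a direct count on $J(n-2,2)$ gives $\|f_1\|^2=2(n-4)$ (the function $f_1$ equals $+1$ on $n-4$ vertices and $-1$ on another $n-4$) and $\|f_2\|^2=\tfrac{(n-2)(n-4)}{4}$ (twice the number $\binom{(n-2)/2}{2}$ of $2$-subsets inside each half). Summing over the $t_1$ differences of type $1$ and the $t_2$ of type $2$ yields
\[
S \;=\; 2(n-4)\,t_1 \;+\; \frac{(n-2)(n-4)}{4}\,t_2 .
\]

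The second evaluation of $S$, which I expect to be the crux, is combinatorial. The assignment $(i,j,y)\mapsto\{\,y\cup\{i\},\,y\cup\{j\}\,\}$, with $i<j$ and $y$ a $2$-set avoiding $i,j$, is a bijection onto the edge set $E(\Gamma)$: any edge $\{u,v\}$ is recovered from $y=u\cap v$ and $\{i,j\}=\{u\setminus y,\,v\setminus y\}$. Consequently $S=\sum_{\{u,v\}\in E(\Gamma)}(g(u)-g(v))^2$, and since $g\in\{0,1\}$ this is exactly the number of edges of $\Gamma$ between $X_1$ and $X_2$. By equitability that cut has size $|X_1|\,p_{12}=|X_2|\,p_{21}$; solving this together with $|X_1|+|X_2|=\binom{n}{3}$ gives $|X_1|=\tfrac{p_{21}}{p_{12}+p_{21}}\binom{n}{3}$, so the cut has size $\tfrac{p_{12}p_{21}}{p_{12}+p_{21}}\binom{n}{3}$. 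Using the standing relation $p_{12}+p_{21}=2n-2$ this simplifies to
\[
S \;=\; \frac{p_{12}p_{21}}{2(n-1)}\cdot\frac{n(n-1)(n-2)}{6} \;=\; \frac{p_{12}p_{21}\,n(n-2)}{12}.
\]

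Equating the two expressions for $S$ and multiplying through by $12$ gives precisely
$p_{12}p_{21}\,n(n-2)=24t_1(n-4)+3t_2(n-2)(n-4)$, as claimed. The main obstacle is the third step: recognising the edge bijection that reinterprets $S$ as the size of the cut, and computing the cut size correctly from equitability. Everything else reduces to the value--counts for $\|f_1\|^2$ and $\|f_2\|^2$ and to routine bookkeeping with $p_{12}+p_{21}=2n-2$; the classification input from Lemmas~\ref{lem:partdif} and~\ref{L:Derivates} is what guarantees that no partial differences other than types $1$ and $2$ can occur.
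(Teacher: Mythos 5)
Your proposal is correct and is essentially the paper's own argument: both double-count the nonzero values of the partial differences $g_{ij}$ (your $S$ is exactly this count, since each $g_{ij}$ takes values in $\{-1,0,1\}$) against the number of edges of $\Gamma$ between $X_1$ and $X_2$, via the same bijection $(i,j,y)\mapsto\{y\cup\{i\},y\cup\{j\}\}$ and the same cut-size formula $\tfrac{p_{12}p_{21}}{p_{12}+p_{21}}\binom{n}{3}$. The only cosmetic difference is that you phrase the count as a sum of squares rather than directly as "the number of nonzero values," and you spell out the justification (via Lemma \ref{lem:partdif} and Lemma \ref{L:Derivates}) that only types $1$ and $2$ can occur, which the paper leaves implicit at this point.
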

\begin{proof}
	It is easy to see that there is a one-to-one correspondence between pairs of neighbours from distinct cells of the partition and nonzero values of partial differences of $g$. Clearly, the number of such pairs equals $\frac{p_{12}p_{21}}{p_{12}+p_{21}}{n \choose 3}$. By the structure of our partial differences we conclude that the number of their nonzero values is equal to $2t_1(n-4)+2t_2{\frac{n-2}{2} \choose 2}$. After equating these expressions and using $p_{12}+p_{21}=2n-2$ we get the claim of the theorem.  
\end{proof}

Based on our auxiliary statements we are ready to prove the main result of the paper.

\begin{thm}\label{T:PartDerMain} Let $X=(X_1, X_2)$ be $\lambda_2$-equitable $2$-partition of $J(n,3)$, $n\geq 12$. Then the partition $X$ can be constructed from a matching on the elements of $\left[n\right]$, which corresponds to an instance of a partition $\Pi_1$, $\Pi_2$ or $\Pi_3$.
\end{thm}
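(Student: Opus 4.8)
The plan is to analyse the partition entirely through the two-valued $\lambda_2$-eigenfunction $f$ attached to it (equivalently, through the indicator $g$ of $X_1$). Since the two values of $f$ differ by exactly $1$, every partial difference $f_{ab}$ takes values in $\{-1,0,1\}$, and by Lemma~\ref{lem:partdif} it is either identically zero or a $\lambda_1(n-2,2)$-eigenfunction of $J(n-2,2)$. In the latter case Lemma~\ref{L:Derivates} forces $f_{ab}$ to be of type~$1$ or type~$2$. Hence the whole partition is encoded by the assignment sending each pair $\{a,b\}$ to the type of $f_{ab}$ together with its defining data, and the problem becomes: determine which such assignments can arise from a genuine $\{0,1\}$-valued indicator $g$.

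First I would sort out the vanishing differences. Writing $a\sim b$ whenever $f_{ab}\equiv 0$, Lemma~\ref{L:zero_part_diff} together with the symmetry $f_{ab}=-f_{ba}$ shows that $\sim$ is an equivalence relation on $[n]$; on each class $g$ is invariant under swapping elements, so membership of a triple in $X_1$ depends only on how its elements distribute among the classes. Computing $\sim$ for the three known partitions yields exactly three profiles --- two classes of size $n/2$ for $\Pi_1$, $n/2$ classes of size $2$ for $\Pi_2$, and $n$ singleton classes for $\Pi_3$ --- and the goal of this stage is to prove these are the only profiles that can occur, by bounding the number and sizes of the classes.

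The heart of the argument is a rigidity statement for the nonzero differences, driven by the cocycle identity $f_{ab}+f_{bc}=f_{ac}$, which holds on the common domain of pairs avoiding $a$, $b$ and $c$. Restricting any three differences to this domain and comparing their supports, I would show that the defining data cannot be chosen freely: the defining pair of every type-$1$ difference $f_{ab}$ must equal $\{\bar a,\bar b\}$ for one fixed involution $a\mapsto\bar a$ of $[n]$, and the defining bipartitions of the type-$2$ differences are governed by the same involution. Crucially, one uses that $g$ itself (not merely each $f_{ab}$) is Boolean in order to eliminate inconsistent sign patterns. I expect this to be the main obstacle, and the delicate point is the mixed regime exhibited by $\Pi_3$, where type-$1$ and type-$2$ differences coexist and the cocycle relation must be pushed to show that their defining data interlock coherently rather than generating spurious hybrid assignments; showing that the involution is fixed-point-free --- so that $M:=\{\{a,\bar a\}\}$ is a perfect matching and $n$ is even --- is also part of this step.

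To finish I would combine the rigidity with the counting identity of Lemma~\ref{L:Supports}. Once the structure is known to be $M$-invariant, there are only finitely many candidate families of triples for $X_1$; feeding the corresponding counts $(t_1,t_2)$, together with $p_{12}+p_{21}=2n-2$ and $p_{12}<p_{21}$, into Lemma~\ref{L:Supports} pins $(p_{12},p_{21})$ down to the three values realised by $\Pi_1$, $\Pi_2$ and $\Pi_3$ and rules out every other assignment (this is where the hypothesis $n\geq 12$ makes the numerical bounds decisive). Reconstructing $X_1$ from the matching $M$ in each surviving case then identifies $X$ with an instance of $\Pi_1$, $\Pi_2$ or $\Pi_3$, completing the proof.
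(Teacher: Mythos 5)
Your framework coincides with the paper's own proof in Section \ref{sec:PartialDiff}: the same eigenfunction $f$, the same classification of its partial differences into types $1$ and $2$ via Lemmas \ref{lem:partdif} and \ref{L:Derivates}, the equivalence relation induced by vanishing differences via Lemma \ref{L:zero_part_diff}, and the count of Lemma \ref{L:Supports}. Those preliminary observations are correct. The problem is that everything you call ``the heart of the argument'' --- the rigidity statement --- is asserted rather than proved, and it is precisely where the entire content of the proof lies. Moreover, the statement you propose is not the right one. A single fixed-point-free involution governing all defining data describes only the $\Pi_2$ and $\Pi_3$ regimes: in the $\Pi_1$ regime there are no type-$1$ differences at all, so no involution is ever produced, and the structure is instead a two-class equivalence relation whose classes must be shown to have size exactly $n/2$; conversely in the $\Pi_2$ regime there are no type-$2$ differences. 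More importantly, the spurious configurations are not eliminated by showing that defining data fail to ``interlock.'' In the paper's case analysis several locally consistent configurations survive the cocycle identity and are killed only by counting: for instance, when $123,124\in X_1$ and $134,234\in X_2$ one reaches a coherent local picture that is ruled out by bounding the number of type-$2$ differences by $5(n-5)$ and deriving the impossible inequality $(n-10)^2(n-6)(n-5)\le 0$ from Lemma \ref{L:Supports}; when $123,124,134,234\in X_1$ one must separately refute four candidate quotient matrices by chasing where a type-$1$ or type-$2$ difference would have to place its defining elements. None of this is subsumed by a coherence argument about defining data.

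A second, smaller issue is the finishing step. Lemma \ref{L:Supports} determines $p_{12}p_{21}$ only once $(t_1,t_2)$ is known, and $(t_1,t_2)$ is determined only by the full structure of the partition; it therefore cannot pin down the quotient matrix independently of the rigidity you have not established. In the paper it is used exactly once, as an inequality inside one contradiction, not as the device that selects the three surviving partitions --- there the quotient matrices are read off directly from the reconstructed neighbourhoods. To turn your outline into a proof you would need to carry out, in some form, the case analysis anchored at a single nonzero partial difference (a type-$2$ difference when no type-$1$ difference exists, versus a type-$1$ difference $f_{12}$ with the values $\overline{123},\overline{124},\overline{134},\overline{234}$ split into subcases) and propagate the constraints explicitly; that analysis is the proof, and it is absent here.
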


\begin{proof}
	As it mentioned before, cases of odd $n$ and $p_{12}=p_{21}$ were characterized before by Gavrilyuk and Goryainov \cite{GG_2013}. However, the following proof allows to cover these cases too, so we shall not exclude these cases.
	Consider $$f(x)= \begin{cases}
	\frac{-p_{21}}{p_{12}+p_{21}}, x \in X_1\\
	\frac{p_{12}}{p_{12}+p_{21}}, x \in X_2.
	\end{cases}$$
	As it was previously established, $f$ is $\lambda_2$-eigenfunction of $J(n,3)$, where $\lambda_2=\lambda_2(n,3)$. By definition of $f$ and Lemma \ref{lem:partdif}, every partial difference of $f$ is a $\lambda_1(n-2,2)$-eigenfunction of $J(n-2,2)$ taking three values $\{-1,0,1\}$ or the identically zero function. This fact allows to use Lemma \ref{L:Derivates} for these partial differences in our arguments.
	
	Suppose that there are no partial differences of $f$ of type $1$. Hence, $n$ must be even.
	Let us take some not identically zero partial difference of $f$. Without loss of generality, we take $f_{12}$ with defining sets $M_1=\{3,4,5,\dots, \frac{n}{2}+1\}$, $M_2=\{\frac{n}{2}+2,\frac{n}{2}+3,\dots, n\}$.
	By definition of $f_{12}$ we conclude that $$1ab, 2cd \in X_2 \text{ and } 2ab, 1cd \in X_1$$ for every $a,b\in M_1$ and $c,d\in M_2$.
	
	Suppose that for some  $a,b\in M_1$ we have $f_{ab}\not\equiv 0$. We can assume that $a=3$ and $b=4$. By the structure of $f_{12}$ we know that $f_{34}(1i)=f_{34}(2i)=0$ for $i\in \{5,6,\dots \frac{n}{2}+1\}$. Hence, elements $1$ and $2$ belong to one defining set of $f_{34}$ and $5,6,\dots \frac{n}{2}+1$ to other one. Without loss of generality we may assume that $f_{34}(12)=1$ (if it equals $-1$ we swap elements $3$ and $4$) and $123\in X_2$. Consequently, $f_{14}(23)=1$. From our arguments follows that $f_{14}(ij)=0$ for $i,j\in \{5,6,\dots \frac{n}{2}+1\}$. However, for $n\geq 12$ the set $\{5,6,\dots \frac{n}{2}+1\}$ contains at least $3$ elements, so at least one pair $(i',j')$ from a one of defining sets of $f_{14}$. Therefore, $f_{14}(i'j')\neq 0$ and we get a contradiction.
	
	By similar arguments for all pairs $(i,j)$ such that $i,j\in M_1$ or $i,j\in M_2$ we have that $f_{ij}\equiv 0$.
	By the structure of $f_{12}$ and previous arguments we know that $f_{13}(ij)=0$ for $i,j\in \{4,5,\dots \frac{n}{2}+1\}$. For $n\geq 10$ the set $\{4,5,\dots \frac{n}{2}+1\}$ contains not less than $3$ elements, so $f_{13}$ has no defining sets and $f_{13}\equiv 0$. By similar arguments $f_{1i}\equiv 0$ for $i\in M_1$ and $f_{2i}\equiv 0$ for $i\in M_2$. 
	It guarantees us that each of the sets 
	$$S_1=\{ijk|\, i,j,k\in M_1\cup\{1\}, i\neq j, j\neq k, i\neq k\},$$ 
	$$S_2=\{ijk|\, i,j,k\in M_2\cup\{2\}, i\neq j, j\neq k, i\neq k\},$$ 
	$$S_3=\{ijk|\, i,j\in M_1\cup\{1\}, k\in M_2\cup\{2\}, i\neq j\},$$ 
	$$S_4=\{ijk|\, i,j\in M_2\cup\{2\}, k\in M_1\cup\{1\}, i\neq j\}$$ is a subset of $X_1$ or $X_2$.
	
	The triple $123\in S_1$ has $\frac{3}{2}n-9$ neighbours in $S_1$ and $\frac{3}{2}n$ ones in $S_3$. Since all triples in $S_1$ are elements of a one cell of the partition $X$, we conclude that $p_{12}=\frac{3}{2}n$ or $p_{21}=\frac{3}{2}n$. By our agreement $p_{21}\geq p_{12}$, so the quotient matrix of the partition equals 	
	$$\left(
	\begin{array}{cc}
	\frac{3}{2}n-7 & \frac{n}{2}-2 \\
	\frac{3}{2}n & \frac{3}{2}n-9 \\
	\end{array}
	\right),$$
	$S_1\subseteq X_2$, $S_3\subseteq X_1$. By similar arguments we have that $S_2\subseteq X_2$, $S_4\subseteq X_1$ and the partition is an instance of $\Pi_1$.

	The remaining case is if there is a partial difference of $f$ of type $1$.\\
	After appropriate rearrangement of elements of $[n]$ we may consider $f_{12}\not\equiv 0$ with defining elements $3$ and $4$. In other words, $$13i,\, 24i \in X_2 \text{ and } 14i,\, 23i  \in X_1 \text{ for } i \in \{5,6,\dots n\},$$
	and $$\overline{1ab}=\overline{2ab},\, \overline{3ab}=\overline{4ab} \text{ for } a,b\in  \{5,6,\dots n\},$$  $$\overline{123}=\overline{124},\, \overline{134}=\overline{234}.$$
	
	Values $\overline{123}, \overline{124}, \overline{134}, \overline{234}$ play a crucial role in the determining structure of the partition $X$, and we need to consider all admissible options.  
	\begin{enumerate}
		\item $\overline{123}=\overline{124}=1, \overline{134}=\overline{234}=0$.\\
		Equivalently, $123, \,124\in X_1$ and $134,\, 234 \in X_2$. By direct calculations, $123$ already has $n-2$ neighbours from $X_2$, so $p_{12}\geq n-2$. As we know $p_{12}+p_{21}=2n-2$, $p_{12}\leq p_{21}$. 
		Suppose that $p_{12}=n-2$, $p_{21}=n$. Consequently remaining neighbours of $123$ belong to $X_1$, i.e. $12i\in X_1$, $i\in \{5,6,\dots, n\}$. 
		
		Consider the function $f_{14}$. By our arguments $f_{14}(2i)=-1$ for $i\in \{5,6,\dots,n\}$. Therefore, $f_{14}$ is of type $1$ with defining elements $2$ and $3$. Hence, $f_{14}(23)=0$, but $123\in X_1$ and $234\in X_2$. We get a contradiction.
		
		The last case here is $p_{12}=p_{21}=n-1$. We need to put one more neighbour of $123$ to $X_2$. Without loss of generality we have $125\in X_2$ and $12i\in X_1$, $i\in \{6,7,\dots, n\}$. By our arguments $f_{14}(2i)=-1$ for $i\in \{3,6,7,\dots,n\}$, and $f_{14}(25)=0$. Consequently, $f_{14}$ is of type $1$ with defining elements $2$ and $5$. Therefore, $15i\in X_2$ and $45i\in X_1$ for $i\in \{3,6,7,\dots,n\}$. In particular, $345\in X_1$. The vertex $134 \in X_2$ already has $n-1$ neighbours from $X_1$, consequently, $34i\in X_2$ for $i\in \{1,2,6,7,\dots,n\}$. Now $345\in X_1$ has $n-1$ neighbours from $X_2$, consequently, $35i\in X_1$ for $i\in \{2,4,6,7,\dots,n\}$. Finally, $125\in X_2$ has $n-1$ neighbours from $X_1$, consequently, $25i\in X_12$ for $i\in \{1,4,6,7,\dots,n\}$.
		
		Putting it all together we have that
		$$123,\, 124,\, 145,\, 235,\, 345,\,  14i,\, 23i,\, 12i,\,35i,\,45i \in X_1,$$ 
		$$134,\, 234,\,135,\, 245,\,124,\,13i,\, 24i,\, 34i,\,15i,\,25i \in X_2$$
		for  $i\in \{6,7,\dots,n\}$.
		By direct calculation it is easy to see that all partial differences $f_{ij}$, $1\leq i<j\leq 5$ are of type $1$. 
		
		Consider a difference $f_{67}$. We know that $f_{67}(ij)=0$ for $i\in \{6,7,\dots,n\}$. Therefore $f_{67}$ is of type $2$ or the all-zero function. Consider a function $g(x)=\frac{p_{12}}{p_{12}+p_{21}}-f(x)$. Clearly, $g$ is the characteristic function of $X_1$ and corresponding partial differences of $f$ and $g$ always have the same type.
		As it was established above $g$ has at most $5(n-5)$ partial differences of type $2$. 
		Since for $n> 10$ we have that $2(n-4)<2{\frac{n-2}{2} \choose 2}$, the maximum possible sum of numbers of nonzero values of partial differences of $g$ is in the case when we have $5(n-5)$ differences of type $2$ and ${n \choose 2}-5(n-5)$ ones of type $1$. Therefore, by Lemma \ref{L:Supports} we have that $$(n-1)^2n(n-2)\leq 24\cdot \left({n \choose 2}-5(n-5)\right)(n-4)+15(n-5)(n-2)(n-4),$$
		which is equivalent to $$(n-10)^2(n-6)(n-5)\leq 0.$$
		This inequality does not take place for $n>10$ and we get a contradiction
		
		\item $\overline{123}=\overline{124}=\overline{134}=\overline{234}=1$.\\
		In other words,  $123, \,124, \, 134,\, 234 \in X_1$. Each of these vertices has $n-4$ neighbours from $X_2$, so $p_{12}\geq n-4$. There are four admissible quotient matrices:
		$$\left(
		\begin{array}{cc}
		2n-8 & n-1 \\
		n-1 & 2n-8 \\
		\end{array}
		\right),
		\left(
		\begin{array}{cc}
		2n-7 & n-2 \\
		n & 2n-9 \\
		\end{array}
		\right),$$
		$$
		\left(
		\begin{array}{cc}
		2n-6 & n-3 \\
		n+1 & 2n-10 \\
		\end{array}
		\right),
		\left(
		\begin{array}{cc}
		2n-5 & n-4 \\
		n+2 & 2n-11 \\
		\end{array}
		\right).
		$$ 
		\begin{enumerate}
			\item $p_{12}=n-1$.
			
			Take the vertex $123\in X_1$. It has $n-4$ neighbours from $X_2$, so we need to find three more. Without loss of generality let us take $125,\,126,\,127\in X_2$ and $12i\in X_1$ for $i\in \{8,9,\dots, n\}$. Next, consider $f_{13}$. By our arguments we know that $f_{13}(25)=f_{13}(26)=f_{13}(27)=1$, $f_{13}(24)=0$ and $f_{13}(2i)=0$ for $i\in \{8,9,\dots, n\}$. If $f_{13}$ is of type $2$ then $M_1=\{4,5,6,7\}$ and $|M_1|=\frac{n}{2}-1$, but it contradicts to $n>10$. Hence, it is of type $1$, and $2$ must be a defining element. However, $f_{13}(28)=f_{13}(29)=0$, so this case is also not possible.
			
			\item $p_{12}=n-2$.
			
			Here we repeat arguments from the previous case. As a result we have that $125,\,126\in X_2$ and $12i\in X_1$ for $i\in \{7,8,\dots, n\}$. Next,  $f_{13}(25)=f_{13}(26)=1$, $f_{13}(24)=0$ and $f_{13}(2i)=0$ for $i\in \{7,8,\dots, n\}$. Again if $f_{13}$ is of type $2$  then $M_1=\{4,5,6\}$ and $|M_1|=\frac{n}{2}-1$, but it contradicts to $n>10$. Therefore, it is of type $1$, and $2$ is a defining element. The fact that $f_{13}(27)=f_{13}(28)=0$ lead as to a contradiction.
			\item $p_{12}=n-3$.
			
			Consider the vertex $123\in X_1$. It has $n-4$ neighbours from $X_2$, so we need to find one more. Without loss of generality let us take $125\in X_2$ and $12i\in X_1$ for $i\in \{6,7,\dots, n\}$. Again let us analyse $f_{13}$. By construction we have $f_{13}(25)=1$ and $f_{13}(2i)=0$ for $i\in \{6,7,\dots, n\}$.
			If $f_{13}$ is of type $2$ then the defining set containing elements $2$, $5$ may contain the element $4$ and nothing else. Hence, for $n>8$ it is not realizable. Consequently, it is of type $1$. Since $f_{13}(25)=1$, $2$ or $5$ is a defining element. In the first case it follows from $f_{13}(24)=0$ that $4$ must be second defining element. Therefore, $f_{13}(26)=1$. We get a contradiction. The last possible case is when $5$ is one defining element and the second one $i$ belongs to $\{4,6,7,\dots, n\}$. For all these $i$, we have $f_{13}(5i)=0$ and consequently there is no such an element.   
			\item $p_{12}=n-4$.
			
			This quotient matrix immediately guarantees us that for $i\in \{5,6,\dots, n\}$, we have $12i, 34i\in X_1$. 
			Consequently, $f_{13}(4i)=f_{13}(2i)=0$ for $i\in \{5,6,\dots, n\}$ and $f$ cannot be of type $1$ or of type $2$. Hence, $f_{13}\equiv 0$. Similarly, $f_{24}\equiv 0$.
			
			By the structure of $f_{12}$ and $f_{34}$ we know that $f_{12}(ij)=0$ and $f_{34}(ij)=0$ for $i,j\in \{5,6,\dots, n\}$. 
			
			Therefore, we conclude that for any $i,j\in \{5,6,\dots, n\}$ following equalities take place  $$f(1ij)=f(2ij)=f(3ij)=f(4ij).$$
			
			Suppose that for all $i,j\in \{5,6,\dots, n\}$ we have $f_{ij}\equiv 0$. Hence, all triples from $\{5,6,\dots, n\}$ belong to one cell of the partition. The same is true for triples $\{kij| k\in \{1,2,3,4\}\,i,j\in \{5,6,\dots, n\} \}$.
			Since our partition is equitable, these two cells must be different. Therefore, the vertex $567$ has $3(n-7)$ neighbours of it's cell and $12$ neighbours from other one. Therefore, $p_{21}=12$ or $p_{12}=12$. The first case leads us to $n=10$ , but $n\geq 12$. So, the last opportunity is $n=16$ and $\{kij| k\in \{1,2,3,4\}\,i,j\in \{5,6,\dots, n\} \}\subseteq X_2$. In this case the vertex $125\in X_1$ has exactly $2+2(n-5)=24$ neighbours from $X_2$ instead of $n+2=18$ and we get a contradiction.
			
			Without loss of generality we may consider $f_{56}\not\equiv 0$. We know that $$f_{56}(12)=f_{56}(13)=f_{56}(14)=f_{56}(23)=f_{56}(24)=f_{56}(34)=0.$$
			Hence, $f$ is of type $1$ and we may take elements $7$ and $8$ as it's defining elements. By similar arguments as we provided above we conclude that for $i\in \{1,2,3,4,9,10,\dots, n\}$, we have $57i, 68i, 58i, 67i \in X_1$ and $56i, 78i\in X_2$. 
			All these conclusions guarantee us that $$f_{15}(24)=f_{15}(26)=f_{15}(46)=-1,$$
			so $f_{15}$ is of type $2$. We conclude that $\{2,4,6\}\subset M_2$ for $f_{15}$. Suppose that for all $i,j\in \{9,10,11,\dots, n\}$ we have $f_{ij}\equiv 0$. Consequently, $f_{15}(ij)$ is the same value for all $i,j\in \{9,10,\dots, n\}$. However, for $n\geq 12$ it contradicts to the type of $f_{15}$. The last opportunity is that there is one more partial difference of the same structure as $f_{12}$ and $f_{56}$. Without loss of generality it is $f_{9,10}$ with defining elements $11$ and $12$. Since $f_{15}$ is of type $2$ one may find such a pair  of elements $(i,j)$ from $\{9,10,11,12\}$, that $f_{15}(ij)\neq 0$, but it contradicts to the structure of $f_{9,10}$.

		\end{enumerate}	
		\item $\overline{123}=\overline{124}=\overline{134}=\overline{234}=0$.\\
		In other words,  $123, \,124, \, 134,\, 234 \in X_2$. This is the last possible case for vertices $123$, $124$, $134$ and $234$. Hence, we may consider that similar equalities take place for all partial differences of type $1$ of the function $f$. Further arguments depend on values $\overline{12i}, \overline{34i}$ for $i\in [n]\setminus \{1,2,3,4\}$.
		\begin{enumerate}
			\item $\overline{12i}=\overline{34i}=1$ for $i\in [n]\setminus \{1,2,3,4\}$.
			In other words, all these vertices belong to $X_1$. Evidently, the vertex $123\in X_2$ and has $n-1$ neighbours from $X_2$  and $2n-8$ ones from $X_1$. Therefore, the quotient matrix of this partition equals	$$\left(
			\begin{array}{cc}
			3n-15 & 6 \\
			2n-8 & n-1 \\
			\end{array}
			\right)$$
			For the moment, we know that every vertex containing the pair $13$ or $24$ belongs to $X_2$. Let us analyse the partial difference $f_{15}$. We know that $f_{15}(23)=f_{15}(34)=1$. Consequently, $f_{15}$ is of type $1$ and one its defining element is $3$. Since $f_{15}(24)=0$, the second defining element belongs to $\{6,7,\dots, n\}$. Without loss of generality it will be $6$. By structure of the partial difference we obtain that $i56\in X_2$, for $i\in [n]\setminus \{1,3\}$. Since $f_{15}$ is of type $1$ and by arguments provided above we conclude that $156$, $356\in X_2$ too. Therefore, every vertex containing the pair $56$ belongs to $X_2$. Further we consider $f_{17}$ and by same arguments conclude that it is of type $1$ with defining elements $3$ and $8$. This process ends with $f_{1(n-1)}$. Finally, for $j\in \{1,2\dots \frac{n}{2}\}$ and $i\in [n]\setminus \{2j-1,\, 2j\}$, we have that $\{2j-1,2j,i\} \in X_2$. It is easy to see that all these vertices have $n-1$ neighbours from $X_2$, so all their remaining neighbours belong to $X_1$. Evidently, this partition is an instance of $\Pi_2$. 
			\item $\overline{12i}=0$ for some $i$. As usual, it is convenient to take $i=5$ and conclude that $125\in X_2$. In this case we have $f_{13}(25)=1$ and $f_{13}(24)=0$. Let $f_{13}$ be of type $1$. If $2$ is a defining element then the only possible second element is $4$, so $f_{13}(2i)=1$ and $12i\in X_2$ for $i\in \{5,6,\dots, n\}$. By simple counting we have that $123$ has at least $2n-5$ neighbours from $X_2$, and not more that $n-4$ ones from $X_1$. Hence, $p_{21}\leq n-4$ and we get a contradiction. Another possible case is when one defining element is $5$. If the second one is $4$ then we obtain $f_{13}(24)=-1$ but is must be $0$. If the second element $j$ is from the set $\{6,7,\dots, n\}$ then we have that $f_{13}(5k)=1$ for $k\in [n]\setminus \{1,3,5,j\}$, but $f_{13}(4k)\leq 0$.
			
			Consequently, we conclude that $f_{13}$ is of type $2$. Since $f_{13}(25)=1$ and $f_{13}(24)=0$, elements $2$ and $5$ belong to one defining set and $4$ to another one. Without loss of generality let us take $M_1=\{2,5,7,\dots, n-1\}$ and $M_2=\{4,6,8,\dots,n\}$. At the moment the following information on the structure of our partition is known 
			$$13i\in X_2 \text{ for } i\in [n]\setminus \{1,3\},   24i \in X_2 \text{ for } i\in [n]\setminus \{2,4\},$$
			$$14i,\, 23i \in X_1 \text{ for } i\in [n]\setminus \{1,2,3,4\},$$
			$$1ij \in X_2 \text{ for } i,j\in \{2,5,7,\dots, n-1\}, 3ij \in X_2 \text{ for } i,j\in \{4,6,\dots, n\},$$
			$$2ij \in X_2 \text{ for } i,j\in \{1,5,7,\dots, n-1\}, 4ij \in X_2 \text{ for } i,j\in \{3,6,8,\dots, n\},$$
			$$3ij \in X_1 \text{ for } i,j\in \{2,5,7,\dots, n-1\}, 1ij \in X_1 \text{ for } i,j\in \{4,6,\dots, n\},$$
			$$4ij \in X_1 \text{ for } i,j\in \{1,5,7,\dots, n-1\}, 2ij \in X_1 \text{ for } i,j\in \{3,6,8,\dots, n\}.$$
			Based on this structure it is easy to count that the vertex $123$ has $\frac{3}{2}n-3$ neighbours from $X_2$ and $\frac{3}{2}n-6$ ones from $X_1$. Consequently, the partition has the following quotient matrix
			$$\left(
			\begin{array}{cc}
			\frac{5}{2}n-15 & \frac{n}{2}+4 \\
			\frac{3}{2}n-6 & \frac{3}{2}n-3 \\
			\end{array}
			\right)$$
			Suppose that $579\in X_1$. Then $f_{15}(79)=f_{15}(23)=f_{15}(34)=1$ and $f_{15}$ is of type $2$. It follows from these equalities that $2$, $3$, $4$, $7$ and $9$ belong to one defining set. Hence, $f_{15}(47)=1$ and $147\in X_2$, but it is false. Since these arguments work not only for $579$ but for all $i,j,k\in M_1\setminus \{1,2\}$ and by previous equalities we conclude that 
			$$ijk \in X_2 \text{ for } i,j,k\in \{1,2,5,7,\dots, n-1\}.$$
			Similar arguments for the triple $\{6,8,10\}$ and the partial difference $f_{36}$ give us
			$$ijk \in X_2 \text{ for } i,j,k\in \{2,3,4,6,\dots, n\}.$$
			Simple calculations show that vertices $123$ and $134$ both have $\frac{3}{2}n-3$ neighbours from $X_2$. Consequently, $$12i \in X_1 \text{ for } i\in \{6,8, \dots, n\},$$ $$34i \in X_1 \text{ for } i\in \{7,9,\dots, n-1\}.$$
			As we established before every triple containing the pair $(1,3)$ or $(2,4)$ belongs to $X_2$.
			Our next goal is to show that there exists a perfect matching between $M_1$ and $M_2$ such that these pairs have same property.
			Consider the vertex $126\in X_1$. By simple counting we have that $126$ has $\frac{n}{2}+2$ neighbours from $X_2$. In order to build the partition we need to find two more neighbours. Without loss of generality let use take $156\in X_2$, by the structure of $f_{12}$ we immediately have that $256\in X_2$ too. Since $f_{13}(56)=0$ and by the structure of $f_{12}$ we conclude that $356,456 \in X_2$.     
			All these arguments allow us to claim that $f_{15}(23)=f_{15}(34)=1$ and $f_{15}(26)=-1$. Obviously, $f_{15}$ has type $1$ and with defining elements $3$ and $6$. Hence, $56i\in X_2$ for $i \in [n]\setminus \{5,6\}$. As a result of consideration of $126$ we find one more pair $(5,6)$ with same properties as for $(1,3)$ and $(2,4)$. In particular, $$15i, 25i \in X_1 \text{ for } i\in \{6,8, \dots, n\},$$ $$36i, 46i \in X_1 \text{ for } i\in \{7,9,\dots, n-1\}.$$ 
			Now let us consider the vertex $128\in X_1$. Again we need to find two more adjacent vertices from $X_2$. As we have just showed $158,258\in X_1$, so without loss of generality we have $178,278\in X_2$. By similar arguments we can show that $78$ is one more pair with required properties. 
			After considering all remaining elements we will show that all triples containing pairs $(1,3)$ $(2,4)$, $(5,6)$, $(7,8)$, $\dots$, $(n-1,n)$ belong to $X_2$. Besides that, all triples from $\{1,2,5,7,\dots, n-1\}$ and $\{3,4,6,8,\dots, n\}$ also belong to $X_2$. Direct calculations show that each of these vertices has $\frac{3}{2}n-3$ neighbours from $X_2$. Therefore, all their remaining neighbours are from $X_1$. Finally, we constructed a partition which is an instance of $\Pi_3$.

		\end{enumerate}
	\end{enumerate}

\end{proof}

\section{Local structure of equitable 2-partitions}\label{sec:Jn3t2}

Gavrilyuk and Goryainov \cite{GG_2013} use an algebraic tool to analyse the structure of an equitable 2-partition in $J(n,3)$. In particular, they classify all equitable 2-partitions in $J(n,3)$ when $n$ is odd, and all equitable 2-partitions in $J(n,3)$ for which both cells have the same size (such a partition corresponds to a symmetric quotient matrix). In this section, we introduce the tools used by Gavrilyuk and Goryainov \cite{GG_2013}, and suggest an alternative approach to analyse the remaining cases of $\lambda_2$-equitable 2-partitions of $J(n,3)$. 


We will work with equitable $2$-partitions of Johnson graphs in notations from Section \ref{sec:PartialDiff}. In particular, $X = \{X_1,X_2\}$ is a $\theta$-equitable $2$-partition of $J(n,3)$ for a non-principal eigenvalue $\theta$ of $\Gamma=J(n,3)$ with quotient matrix 
$$
A(\Gamma / X) =
\left(
\begin{array}{cc}
p_{11} & p_{12} \\
p_{21} & p_{22} \\
\end{array}
\right),
$$
where $p_{12}<p_{21}$ and  $p_{12}+p_{21}=2n-2$. Further, we can define indicator functions for vertices and subsets of vertices as in Section \ref{sec:PartialDiff}.
	
We have the following identities, relating the indicator of a vertex $abc$ of $\Gamma$ and the number of vertices in each row of $\Gamma(abc)$ that are in $X_1$.
\begin{lemma}\label{lem:localIndicator}
	For a vertex $abc$ of $J(n,3)$, the following equality holds
	\begin{equation}\label{eq:locIndEq}
	\overline{abc}\cdot(\theta+3) + p_{21} = \overline{ab\ast} + \overline{ac\ast} + \overline{bc\ast}.
	\end{equation}
\end{lemma}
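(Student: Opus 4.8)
The plan is to evaluate the right-hand side of \eqref{eq:locIndEq} directly by separating, in each term $\overline{ab\ast}$, the contribution of the apex $abc$ from the contribution of the corresponding row of the neighbourhood $\Gamma(abc)$. Since $ab\ast$ consists of $abc$ together with the $ab$-row $ab\ast\setminus abc$, we have $\overline{ab\ast}=\overline{abc}+\overline{ab\ast\setminus abc}$, and likewise for the $ac$- and $bc$-rows. Summing the three identities gives
$$\overline{ab\ast}+\overline{ac\ast}+\overline{bc\ast}=3\,\overline{abc}+\bigl(\overline{ab\ast\setminus abc}+\overline{ac\ast\setminus abc}+\overline{bc\ast\setminus abc}\bigr),$$
so that $abc$ is counted once in each of the three cliques through it, producing the coefficient $3$.

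Next I would use the structural description of $\Gamma(abc)$ as the $3\times(n-3)$-lattice: its three rows partition the neighbourhood, so the parenthesised sum is precisely $|X_1\cap\Gamma(abc)|$, the number of neighbours of $abc$ lying in $X_1$. By the equitable-partition property this count equals $p_{11}$ when $abc\in X_1$ and $p_{21}$ when $abc\in X_2$; since $\overline{abc}\in\{0,1\}$, both cases are captured uniformly by
$$|X_1\cap\Gamma(abc)|=\overline{abc}\,p_{11}+(1-\overline{abc})\,p_{21}=\overline{abc}\,(p_{11}-p_{21})+p_{21}.$$

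Finally I would substitute this into the previous display and invoke Lemma \ref{lem:theta}, which gives $\theta=p_{11}-p_{21}$. Collecting terms yields
$$\overline{ab\ast}+\overline{ac\ast}+\overline{bc\ast}=3\,\overline{abc}+\overline{abc}\,(p_{11}-p_{21})+p_{21}=\overline{abc}\,(\theta+3)+p_{21},$$
which is exactly the claimed identity. There is no genuine obstacle here; the only point requiring care is the bookkeeping that each of $ab\ast$, $ac\ast$, $bc\ast$ contains the apex $abc$, which is what turns the expected eigenvalue $\theta$ into the shift $\theta+3$. Once this overcount and the partition of $\Gamma(abc)$ into its three rows are noted, the statement reduces to a one-line computation.
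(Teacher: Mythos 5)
Your proof is correct and is exactly the ``simple counting argument'' the paper invokes: the three cliques $ab\ast$, $ac\ast$, $bc\ast$ each contain the apex $abc$ once and their complements partition $\Gamma(abc)$, so the right-hand side equals $3\,\overline{abc}+\overline{\Gamma(abc)}$, and the equitable property together with $\theta=p_{11}-p_{21}$ from Lemma \ref{lem:theta} finishes the computation. The paper only sketches this and defers to \cite{GG_2013}; your write-up supplies the same argument in full detail, with the overcounting of the apex correctly identified as the source of the $+3$.
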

\begin{proof}
	This can be shown by using a simple counting argument and the relationship between the quotient matrix and the number $\overline{\Gamma(abc)}$. For the full details, see \cite[Equation (2)]{GG_2013}.
\end{proof}

\begin{lemma}\label{lem:lemeq1}
	For any four distinct elements $a,b,c,d \in  \left[n\right]$, the following condition holds:
	\begin{equation}\label{eq:eq1}
	\overline{ab\ast} - \overline{cd\ast} = \frac{\theta+3}{2}(\overline{abc} + \overline{abd} - \overline{acd} - \overline{bcd}).
	\end{equation}
\end{lemma}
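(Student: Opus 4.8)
The plan is to derive this identity directly from Lemma~\ref{lem:localIndicator} by writing that equation for each of the four triples contained in $\{a,b,c,d\}$ and then taking a suitable signed combination. Concretely, I would instantiate \eqref{eq:locIndEq} at the vertices $abc$, $abd$, $acd$, and $bcd$, obtaining the four equations
\begin{align*}
\overline{abc}\,(\theta+3) + p_{21} &= \overline{ab\ast} + \overline{ac\ast} + \overline{bc\ast},\\
\overline{abd}\,(\theta+3) + p_{21} &= \overline{ab\ast} + \overline{ad\ast} + \overline{bd\ast},\\
\overline{acd}\,(\theta+3) + p_{21} &= \overline{ac\ast} + \overline{ad\ast} + \overline{cd\ast},\\
\overline{bcd}\,(\theta+3) + p_{21} &= \overline{bc\ast} + \overline{bd\ast} + \overline{cd\ast}.
\end{align*}
The sign pattern to use is exactly the one appearing on the right-hand side of \eqref{eq:eq1}: I add the first two equations and subtract the last two.

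The key observation is that this particular combination produces two simultaneous cancellations. First, the constant terms $p_{21}$ appear with coefficients $+1,+1,-1,-1$, so they sum to zero; this is why no additive constant survives in the final identity. Second, among the six row-indicators $\overline{ab\ast},\overline{ac\ast},\overline{ad\ast},\overline{bc\ast},\overline{bd\ast},\overline{cd\ast}$, each of the four ``mixed'' terms $\overline{ac\ast},\overline{ad\ast},\overline{bc\ast},\overline{bd\ast}$ occurs once with a $+$ sign and once with a $-$ sign, hence cancels, while $\overline{ab\ast}$ accumulates coefficient $+2$ and $\overline{cd\ast}$ accumulates coefficient $-2$. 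The left-hand side, meanwhile, collapses to $(\theta+3)(\overline{abc}+\overline{abd}-\overline{acd}-\overline{bcd})$.

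Putting these together yields
\begin{equation*}
(\theta+3)\bigl(\overline{abc}+\overline{abd}-\overline{acd}-\overline{bcd}\bigr) = 2\overline{ab\ast} - 2\overline{cd\ast},
\end{equation*}
and dividing by $2$ gives precisely \eqref{eq:eq1}. There is no genuine obstacle here: the only thing to be careful about is the bookkeeping of which row-indicators cancel and verifying that the chosen $\pm$ pattern is the one that isolates $\overline{ab\ast}$ and $\overline{cd\ast}$ while eliminating the four cross terms. The symmetry of the configuration (the pairs $\{a,b\}$ and $\{c,d\}$ are the two pairs \emph{not} split across the two signs) makes this pattern the natural choice, so I expect the verification to be entirely routine once the four instances of Lemma~\ref{lem:localIndicator} are written down.
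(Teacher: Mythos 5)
Your proof is correct and is essentially identical to the paper's: both instantiate Lemma~\ref{lem:localIndicator} at the four triples $abc$, $abd$, $acd$, $bcd$, combine them with signs $+,+,-,-$ so that the $p_{21}$ terms and the four mixed row-indicators cancel, and divide by $2$.
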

\begin{proof}
	This result is a rearrangement and application of \cite[Lemma 3.5]{GG_2013}. By Lemma \ref{lem:localIndicator}, we have the following equalities:
	$$
	\overline{abc}\cdot(\theta+3) + p_{21} = \overline{ab\ast} + \overline{ac\ast} + \overline{bc\ast},
	$$
	$$
	\overline{abd}\cdot(\theta+3) + p_{21} = \overline{ab\ast} + \overline{ad\ast} + \overline{bd\ast},
	$$
	$$
	-\overline{acd}\cdot(\theta+3) - p_{21} = -\overline{ac\ast} - \overline{ad\ast} - \overline{cd\ast},
	$$
	$$
	-\overline{bcd}\cdot(\theta+3) - p_{21} = -\overline{bc\ast} - \overline{bd\ast} - \overline{cd\ast}.
	$$
	Then sum up these four equalities and divide by $2$ to see the result.
\end{proof}

Lemma \ref{lem:lemeq1} shows that for any two vertex-disjoint maximum cliques $ab\ast$ and $cd\ast$ in $J(n,3)$, the difference $\overline{ab\ast} - \overline{cd\ast}$ is determined by the eigenvalue $\theta$ and the four values		$\overline{abc}$, $\overline{abd}$, $\overline{acd}$ and $\overline{bcd}$.

\begin{lemma}\label{lem:eq2}
	For any five distinct elements $a,b,c,d,e \in \left[n\right]$, the following condition holds:
	\begin{equation}\label{eq:eq2}
	\overline{ab\ast} - \overline{ac\ast} = \frac{\theta+3}{2}(\overline{abd} + \overline{abe} + \overline{cde} - \overline{acd} - \overline{ace} - \overline{bde}).
	\end{equation}
\end{lemma}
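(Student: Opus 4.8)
The plan is to reduce the statement to the already-established Lemma~\ref{lem:lemeq1}, which computes the difference of two \emph{vertex-disjoint} maximum cliques. The cliques $ab\ast$ and $ac\ast$ occurring in \eqref{eq:eq2} are not disjoint: they share the single vertex $abc$, so identity \eqref{eq:eq1} cannot be applied to them directly. The idea is therefore to bridge through a third clique that is disjoint from both and to telescope. Since $a,b,c,d,e$ are pairwise distinct, the clique $de\ast$ is vertex-disjoint from each of $ab\ast$ and $ac\ast$, and Lemma~\ref{lem:lemeq1} applies to both pairs $(ab\ast,de\ast)$ and $(ac\ast,de\ast)$.

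Concretely, I would first apply \eqref{eq:eq1} to the disjoint pair $ab\ast$, $de\ast$ to get
\[
\overline{ab\ast} - \overline{de\ast} = \frac{\theta+3}{2}\left(\overline{abd} + \overline{abe} - \overline{ade} - \overline{bde}\right),
\]
and then to the disjoint pair $ac\ast$, $de\ast$ to get
\[
\overline{ac\ast} - \overline{de\ast} = \frac{\theta+3}{2}\left(\overline{acd} + \overline{ace} - \overline{ade} - \overline{cde}\right).
\]
Subtracting the second identity from the first cancels the common $\overline{de\ast}$ on the left, leaving $\overline{ab\ast} - \overline{ac\ast}$, while on the right the two copies of the bridging term $\overline{ade}$ cancel, leaving exactly $\frac{\theta+3}{2}\left(\overline{abd} + \overline{abe} + \overline{cde} - \overline{acd} - \overline{ace} - \overline{bde}\right)$. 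This is precisely \eqref{eq:eq2}.

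There is no real obstacle here beyond careful sign bookkeeping; the only point worth checking is that the bridging-triple term $\overline{ade}$ enters both intermediate identities with the same sign, so that it cancels on subtraction, which it does. As an alternative that avoids invoking Lemma~\ref{lem:lemeq1}, one can argue directly from the local identity \eqref{eq:locIndEq}: form the signed sum of the six instances of Lemma~\ref{lem:localIndicator} for the triples $abd,abe,cde$ with coefficient $+1$ and for $acd,ace,bde$ with coefficient $-1$. The additive constant $p_{21}$ then cancels (three $+$ and three $-$ contributions), every $\ast$-term except $\overline{ab\ast}$ and $\overline{ac\ast}$ cancels in pairs, and the $\ast$-side collapses to $2(\overline{ab\ast} - \overline{ac\ast})$; dividing by $2$ again yields \eqref{eq:eq2}. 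Either route is a short computation once the correct six triples have been identified.
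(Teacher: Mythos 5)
Your proof is correct and follows essentially the same strategy as the paper: telescoping differences of disjoint-clique pairs via Lemma~\ref{lem:lemeq1} (the paper chains through two intermediate cliques, $cd\ast$ and $be\ast$, using three applications, whereas you bridge through the single clique $de\ast$ with two applications — a marginally more economical but substantively identical argument). Your alternative direct computation from Lemma~\ref{lem:localIndicator} also checks out.
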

\begin{proof}
	By Lemma \ref{lem:lemeq1}, we have the following equalities:
	$$
	\overline{ab\ast} - \overline{cd\ast} = \frac{\theta+3}{2}(\overline{abc} + \overline{abd} - \overline{acd} - \overline{bcd}),
	$$
	$$
	\overline{cd\ast} - \overline{be\ast} = \frac{\theta+3}{2}(\overline{bcd} + \overline{cde} - \overline{bce} - \overline{bde}),
	$$
	$$
	\overline{be\ast} - \overline{ac\ast} = \frac{\theta+3}{2}(\overline{abe} + \overline{bce} - \overline{abc} - \overline{ace}).
	$$
	Then we sum up these three equalities to see the result.
\end{proof}

\subsection{Local structure of $\lambda_2$-equitable $2$-partitions}\label{sec:Jt2}

From now on, we will assume the partition $X=\{X_1,X_2\}$ is $\lambda_2$-equitable. Without loss of generality, we let $p_{11}\geq p_{22}$. By Lemma \ref{lem:theta}, we have 
$$
4(n-4) = k+\lambda_2 = p_{11}+p_{22},
$$
so we must have $p_{11}\geq 2n-8$. If $p_{11}=2n-8$, we see that $p_{11}=p_{22}$ and $p_{12}=p_{21}=n-1$. All $\lambda_2$-equitable $2$-partitions with such a quotient matrix are found in \cite{GG_2013}. For the rest of the paper we assume $p_{11}\geq 2n-7$. 

Let $abc$ be a vertex of $\Gamma$, such that $abc\in X_1$. By Equation \eqref{eq:eq2}, we have 

\begin{equation}\label{eq:rowdiff}
\overline{ab\ast} - \overline{ac\ast} = \frac{n-4}{2}(\overline{abd} + \overline{abe} + \overline{cde} - \overline{acd} - \overline{ace} - \overline{bde}).
\end{equation}

Using this and the fact that $0\leq \overline{ij\ast}\leq n-2$, we see that the difference  $\overline{ab\ast} - \overline{ac\ast}$ is equal to $h(n-4)/2$, where $h\in \{-2,-1,0,1,2\}$.  

Now we fix a vertex $abc\in V(\Gamma)$. Without loss of generality, we may assume that $\overline{ab\ast}\geq\overline{ac\ast}\geq\overline{bc\ast}$. Now we define the \emph{difference tuple} of $abc$ as the tuple $(\overline{ab\ast} - \overline{ac\ast},\overline{ac\ast} - \overline{bc\ast})$. We have the following possibilities for difference tuples:
~
\begin{varwidth}{\textwidth}
	\begin{tasks}[label=(\Roman*),label-width={0.7cm},column-sep={2cm}](2)\label[cases]{tasks:I-VI}
		\task\label{case:I}  $\left(n-4,0 \right)$
		\task\label{case:II} $\left( (n-4)/2, (n-4)/2 \right)$
		\task\label{case:III} $\left( 0, n-4\right)$
		\task\label{case:IV} $\left( (n-4)/2,0 \right)$
		\task\label{case:V} $\left( 0, (n-4)/2 \right)$
		\task\label{case:VI} $\left( 0,0 \right)$
	\end{tasks}
\end{varwidth}

\subsection{Case analysis}

To understand the arguments in this section, we will be using arrays of the following form. Consider the vertex $abc$ of $\Gamma$ and $X=\{X_1,X_2\}$, a $\lambda_2$-equitable 2-partition of $\Gamma$. The \emph{nb-array} of $abc$ is a $3\times(2m-3)$-array corresponding to the neighbourhood $\Gamma(abc)$, and at each vertex $u\in \Gamma(abc)$ the corresponding entry of the array is equal to $\overline{u}$. The rows are given by the comments on the right. If needed, the indices of columns are given below the braces.

$$
\Gamma(abc):
\underbrace{
	\begin{array}{c}
	\overline{abd}\\
	\overline{acd}\\
	\overline{bcd}
	\end{array}
}_{d}
\underbrace{
	\begin{array}{cc}
	\overline{abe} & \overline{abf}\\
	\overline{ace} & \overline{acf}\\
	\overline{bce} & \overline{bcf}
	\end{array}
}_{\{e,f\}}
\begin{array}{ccccccccc}
\dots & \dots & \dots & \dots & \dots & \dots & \dots & \dots & \dots\\
\dots & \dots & \dots & \dots & \dots & \dots & \dots & \dots & \dots\\
\dots & \dots & \dots & \dots & \dots & \dots & \dots & \dots & \dots
\end{array}
\begin{array}{c}  
~\leftarrow ab\text{-row} \\
~\leftarrow ac\text{-row} \\
~\leftarrow bc\text{-row} 
\end{array}
$$
When considering the cases \labelcref{case:I,case:II,case:III,case:IV,case:V,case:VI}, we can use Equation \eqref{eq:rowdiff} to deduce restrictions on the corresponding nb-arrays. Note that we have assumed that\linebreak $\overline{ab\ast}\geq\overline{ac\ast}\geq\overline{bc\ast}$.

\begin{lemma}\label{lem:eqq11}
	Let $abc$ be a vertex of $\Gamma$ and $\overline{abc}=1$ Then:
	\begin{enumerate}
		\item If $\overline{ab\ast}=\overline{ac\ast}$, there are no indices $d,e\in \left[n\right]\setminus \{a,b,c\}$ such that $\overline{abd}=\overline{abe}=1$ and $\overline{acd}=\overline{ace}=0$.
		\item If  $\overline{ab\ast}>\overline{ac\ast}$, there are no indices $d,e\in \left[n\right]\setminus \{a,b,c\}$ such that 
		\begin{enumerate}
			\item $\overline{acd}=1$ and $\overline{abe}=\overline{abd}=\overline{ace}=0$, 
			\item $\overline{abd}=0$ and $\overline{abe}=\overline{acd}=\overline{ace}=1$, or	
			\item $\overline{acd}=\overline{ace}=1$ and $\overline{abd}=\overline{abe}=0$.
		\end{enumerate}
	\end{enumerate}
\end{lemma}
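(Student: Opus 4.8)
The plan is to reduce every assertion to a single substitution into Equation~\eqref{eq:rowdiff} followed by a bounded-range argument. Since the partition is $\lambda_2$-equitable we have $\theta+3=n-4$, so for our vertex $abc$ (which lies in $X_1$) and any distinct $d,e\in[n]\setminus\{a,b,c\}$, Equation~\eqref{eq:rowdiff} reads
\[
\overline{ab\ast}-\overline{ac\ast}=\frac{n-4}{2}\bigl(\overline{abd}+\overline{abe}+\overline{cde}-\overline{acd}-\overline{ace}-\overline{bde}\bigr).
\]
As recorded just before the lemma, the left-hand side equals $h(n-4)/2$ with $h\in\{-2,-1,0,1,2\}$; because we have normalised $\overline{ab\ast}\ge\overline{ac\ast}\ge\overline{bc\ast}$, in fact $h\ge0$. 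Writing $S$ for the parenthesised sum, so that $S=h$, the strategy in each part is to assume the forbidden configuration exists, pick the offending indices $d,e$, substitute the prescribed values of $\overline{abd},\overline{abe},\overline{acd},\overline{ace}$ into $S$, and bound the two remaining terms $\overline{cde},\overline{bde}\in\{0,1\}$ so as to push $S$ outside the admissible range for $h$, yielding a contradiction.

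For Part~1 I would assume $\overline{ab\ast}=\overline{ac\ast}$, so $h=0$ and hence $S=0$. If indices $d,e$ with $\overline{abd}=\overline{abe}=1$ and $\overline{acd}=\overline{ace}=0$ existed, then $S=2+\overline{cde}-\overline{bde}\ge 2-1=1>0$, contradicting $S=0$. This disposes of the equality case.

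For Part~2 the hypothesis is $\overline{ab\ast}>\overline{ac\ast}$, so $h\ge1$ and therefore $S\ge1$. I would treat the three sub-cases by the identical substitute-and-bound step. In~(a), $\overline{acd}=1$ and $\overline{abe}=\overline{abd}=\overline{ace}=0$ give $S=-1+\overline{cde}-\overline{bde}\le0$; in~(b), $\overline{abd}=0$ and $\overline{abe}=\overline{acd}=\overline{ace}=1$ again give $S=-1+\overline{cde}-\overline{bde}\le0$; and in~(c), $\overline{acd}=\overline{ace}=1$ and $\overline{abd}=\overline{abe}=0$ give $S=-2+\overline{cde}-\overline{bde}\le-1$. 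Each outcome contradicts $S\ge1$, completing all cases.

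The computation is entirely routine, and the only point that I expect to require genuine care is the treatment of the two indicators $\overline{cde}$ and $\overline{bde}$: these are not fixed by the hypotheses, so each conclusion must be obtained from their worst-case values in $\{0,1\}$ rather than from any specific choice. I would also verify, for the range of $n$ in force, that the tighter bound $h\in\{-2,-1,0,1,2\}$ (rather than the a priori $\{-3,\dots,3\}$) is legitimate, since it is precisely this restriction that rules out the otherwise-admissible value $S=\pm3$; this follows from $0\le\overline{ij\ast}\le n-2$ together with $\tfrac{3(n-4)}{2}>n-2$ for the relevant $n$.
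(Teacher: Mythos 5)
Your proposal is correct and follows essentially the same route as the paper: both proofs substitute the hypothesised indicator values into Equation~\eqref{eq:rowdiff} and derive a contradiction by bounding the remaining terms $\overline{cde},\overline{bde}\in\{0,1\}$, using that the left-hand side is $0$ in Part~1 and at least $(n-4)/2$ in Part~2. Your added remark about verifying $h\in\{-2,\dots,2\}$ is harmless but not actually needed for the contradictions, which only use $S=0$ or $S\ge 1$.
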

\begin{proof}
	1. Suppose $d,e\in \left[n\right]\setminus \{a,b,c\}$ such that $\overline{abd}=\overline{abe}=1$ and $\overline{acd}=\overline{ace}=0$. Then by Equation \eqref{eq:rowdiff},
	$$
	0=2+\overline{cde}-\overline{bde}>0,
	$$
	giving a contradiction.
	
	2.(a) Suppose $d,e\in \left[n\right]\setminus \{a,b,c\}$ such that $\overline{acd}=1$ and $\overline{abe}=\overline{abd}=\overline{ace}=0$. By Equation \eqref{eq:rowdiff},
	$$
	0<\overline{cde}-1-\overline{bde}<1,
	$$
	giving a contradiction. The result of 2.(b) and (c) follows similarly.
\end{proof}

Now we use Lemma \ref{lem:eqq11} to enumerate all possible nb-arrays in each case. In fact, Cases (I) and (II) are shown to be impossible for all $n>12$. In Cases (III) and (IV) we can characterise the quotient matrix in terms of $n$.

\begin{lemma}[Case \labelcref{case:I}]
	Let $abc$ be a vertex of $\Gamma$ with $\overline{abc}=1$ and difference tuple $(n-4,0)$. Then $n\leq 6$
\end{lemma}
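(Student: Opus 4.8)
The plan is to analyze the nb-array of $abc$ directly, using the difference tuple $(n-4,0)$ together with the structural constraints from Lemma \ref{lem:eqq11}. The difference tuple tells us that $\overline{ab\ast}-\overline{ac\ast}=n-4$ while $\overline{ac\ast}-\overline{bc\ast}=0$. Since each $\overline{ij\ast}$ lies between $0$ and $n-2$, and since the total $\overline{ab\ast}+\overline{ac\ast}+\overline{bc\ast}$ is pinned down by Equation \eqref{eq:locIndEq} (with $\theta=\lambda_2=n-7$ and $\overline{abc}=1$), I would first solve for the three row-sums explicitly. The extreme gap of $n-4$ between the $ab$-row and the $ac$-row forces the $ab$-row to be almost entirely inside $X_1$ and the $ac$-row almost entirely inside $X_2$ (with $\overline{ac\ast}=\overline{bc\ast}$). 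So I expect to pin the values $\overline{ab\ast}$, $\overline{ac\ast}$, $\overline{bc\ast}$ to specific small-deficit quantities in terms of $n$.

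Next I would read off the column structure. Because the $ab$-row is nearly all $1$'s and the $ac$-row is nearly all $0$'s, most columns $d$ satisfy $\overline{abd}=1$, $\overline{acd}=0$. I would then invoke the prohibited configurations of Lemma \ref{lem:eqq11}(2): in particular, once $\overline{ab\ast}>\overline{ac\ast}$ holds, configurations such as $\overline{acd}=\overline{ace}=1$ with $\overline{abd}=\overline{abe}=0$ are forbidden, which severely limits how the few deviating columns can be placed. The idea is that the forced near-uniformity of the first two rows, combined with these local forbidden patterns, leaves essentially no room for the $ac$-row to simultaneously equal the $bc$-row unless $n$ is very small. I would count the number of columns of each admissible type and compare against the required row-sums; the resulting system of equalities and inequalities should collapse to a bound on $n$.

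The main obstacle will be bookkeeping the column types cleanly: each column $d$ contributes a triple $(\overline{abd},\overline{acd},\overline{bcd})\in\{0,1\}^3$, and I need to track how many columns of each of the eight types occur, subject to (i) the three prescribed row-sums, (ii) the $3\times(n-3)$-lattice adjacency constraints that come from $\Gamma(abc)$ being a lattice, and (iii) the forbidden patterns of Lemma \ref{lem:eqq11}. The cleanest route is probably to argue that the large deficit $n-4$ forces at most a constant number (independent of $n$) of columns to deviate from the dominant type $(1,0,0)$, then show each such deviation is either forbidden or forces $\overline{ac\ast}<\overline{bc\ast}$, contradicting the tuple, unless the total column count $n-3$ is itself bounded. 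Summing the contradictions yields $n\leq 6$.

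I expect the hard part to be handling the boundary columns $d$ where $\overline{abd}=0$: these are few (their number equals $n-2-\overline{ab\ast}$), but each one interacts with the $bc$-row through Equation \eqref{eq:rowdiff}, and I will need to verify that placing them anywhere violates either one of the three forbidden configurations or the equality $\overline{ac\ast}=\overline{bc\ast}$. Once that local case-check is complete, the global count forces $n-4$ to be small, giving the claimed $n\leq 6$.
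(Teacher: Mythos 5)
There is a genuine gap: your plan never invokes the standing assumption $p_{11}\geq 2n-7$, and without it the conclusion cannot be reached by the route you describe. The paper's proof is a two-line count: the difference tuple $(n-4,0)$ forces $\overline{ac\ast}=\overline{bc\ast}=\overline{ab\ast}-(n-4)\leq (n-2)-(n-4)=2$, hence $p_{11}=\overline{\Gamma(abc)}=(\overline{ab\ast}-1)+(\overline{ac\ast}-1)+(\overline{bc\ast}-1)\leq (n-3)+1+1=n-1$; comparing with the inequality $p_{11}\geq 2n-7$ (established at the start of Section \ref{sec:Jt2}, after the symmetric case $p_{11}=p_{22}=2n-8$ is set aside) gives $n\leq 6$. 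Your first step --- solving the row sums from Equation \eqref{eq:locIndEq} --- only expresses them in terms of $p_{21}$ (equivalently $p_{11}$), so the upper bound $\overline{ab\ast}\leq n-2$ yields $p_{11}\leq n-1$ but nothing more; the contradiction for $n>6$ comes entirely from the global lower bound on $p_{11}$, which you never use.

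The rest of your plan --- classifying column types and ruling them out via the forbidden configurations of Lemma \ref{lem:eqq11} --- would not rescue the argument, because the nb-arrays compatible with tuple $(n-4,0)$ are locally consistent with Lemma \ref{lem:eqq11} for every $n$. For instance, take the $ab$-row all $1$'s, and the $ac$- and $bc$-rows all $0$'s except at the single column contributed by $abc$ itself: then $\overline{ac\ast}=\overline{bc\ast}=1$ and no prohibited pair of columns from Lemma \ref{lem:eqq11} occurs, since parts 2(a)--(c) all require at least one index $d$ with $\overline{acd}=1$ in the neighbourhood. So no amount of bookkeeping of the eight column types inside $\Gamma(abc)$ "collapses to a bound on $n$"; what kills the configuration is that it forces $p_{11}\leq n-1<2n-7$. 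Your intuition about the extreme gap forcing the $ac$-row to be nearly empty is correct and is exactly the right observation, but the argument must be closed by the quotient-matrix inequality, not by the local forbidden patterns.
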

\begin{proof}
	In this case, we observe that $p_{11}=\overline{\Gamma(abc)}$ is at most $(n-3)+1+1=n-1$. Therefore $n-1\geq 2n-7$ by assumption, and so $n\leq 6$.
\end{proof}

\begin{lemma}[Case \labelcref{case:II}]
	Let $abc$ be a vertex of $\Gamma$ with $\overline{abc}=1$ and difference tuple $((n-4)/2,(n-4)/2)$. Then $n\leq 8$
\end{lemma}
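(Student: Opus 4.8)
The difference tuple is $((n-4)/2,(n-4)/2)$, which by definition means $\overline{ab\ast}-\overline{ac\ast}=(n-4)/2$ and $\overline{ac\ast}-\overline{bc\ast}=(n-4)/2$, so $\overline{ab\ast}=\overline{bc\ast}+(n-4)$ and $\overline{ac\ast}=\overline{bc\ast}+(n-4)/2$. The plan is to bound $p_{11}=\overline{\Gamma(abc)}=\overline{ab\ast}+\overline{ac\ast}+\overline{bc\ast}$ from above, and then compare with the standing assumption $p_{11}\geq 2n-7$ to force $n\leq 8$. Writing $s:=\overline{bc\ast}$, we have $\overline{\Gamma(abc)}=3s+\tfrac{3}{2}(n-4)$, so a good upper bound on $s$ is what I am after.

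\textbf{Bounding the bottom row.} The key obstacle is to show $s=\overline{bc\ast}$ must be small. Here I would invoke Lemma \ref{lem:eqq11}, applied with the roles of the rows permuted so that the strict inequalities match the hypotheses of part~2. Since $\overline{ab\ast}>\overline{ac\ast}>\overline{bc\ast}$ strictly (as $(n-4)/2>0$ for $n>4$), each of the three forbidden configurations in part~2 restricts how the columns can look. In particular, the idea is that the $bc$-row cannot contain too many $1$'s while the $ac$-row and $ab$-row accommodate the required larger counts without creating a forbidden pattern. Concretely, I expect that examining columns by their pattern of $0$'s and $1$'s across the three rows, the constraints \ref{lem:eqq11}(2a)--(2c) limit the number of columns whose $bc$-entry is $1$, yielding a bound of the form $s\leq c_0$ for a small constant $c_0$ (independent of $n$, most likely $c_0\in\{1,2\}$), analogously to how Case~\labelcref{case:I} simply read off $p_{11}\leq n-1$.

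\textbf{Closing the argument.} Once I have $s\leq c_0$, I substitute into $p_{11}=3s+\tfrac{3}{2}(n-4)\leq 3c_0+\tfrac{3}{2}(n-4)$. Combining with $p_{11}\geq 2n-7$ gives
\[
2n-7\leq 3c_0+\tfrac{3}{2}n-6,
\]
which rearranges to $\tfrac{1}{2}n\leq 3c_0+1$, i.e. $n\leq 6c_0+2$. For the target $n\leq 8$ this forces $c_0=1$, so the crux is precisely the sharp bound $\overline{bc\ast}\leq 1$. I would therefore concentrate the combinatorial effort on proving $\overline{bc\ast}\leq 1$ directly from the forbidden patterns: the main obstacle is checking that no assignment of column types is consistent with two distinct indices $d,e$ having $\overline{bcd}=\overline{bce}=1$ without triggering one of the excluded configurations in Lemma \ref{lem:eqq11}(2). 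I expect this to reduce to a short finite case check on the possible $(\overline{ab\cdot},\overline{ac\cdot},\overline{bc\cdot})$-patterns of the two columns indexed by $d$ and $e$, using Equation \eqref{eq:rowdiff} once more to rule out the surviving cases.
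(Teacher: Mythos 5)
Your overall strategy --- bound $p_{11}=\overline{\Gamma(abc)}$ from above using the difference tuple and compare with the standing assumption $p_{11}\geq 2n-7$ --- is exactly the paper's. But two things go wrong. First, the identity $p_{11}=\overline{ab\ast}+\overline{ac\ast}+\overline{bc\ast}$ is off by $3$: each of the cliques $ab\ast$, $ac\ast$, $bc\ast$ contains the vertex $abc$ itself, which is not a neighbour of $abc$, so in fact $p_{11}=\overline{ab\ast}+\overline{ac\ast}+\overline{bc\ast}-3$ (this is Lemma \ref{lem:localIndicator} with $\theta+3=n-4$ and $p_{21}=p_{11}-(n-7)$). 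This slip is what leads you to conclude that the trivial bounds are insufficient and that the sharp estimate $\overline{bc\ast}\leq 1$ is the crux. Second --- and this is the genuine gap --- you never prove that crux: the entire combinatorial content of your argument, namely ruling out two indices $d,e$ with $\overline{bcd}=\overline{bce}=1$ via the forbidden patterns of Lemma \ref{lem:eqq11}, is only announced as an expectation ("I expect this to reduce to a short finite case check"), not carried out. As written, the proof is incomplete at precisely the step that does all the work.

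Once the arithmetic is corrected, no such combinatorial effort is needed. Since $\overline{ab\ast}\leq n-2$, the difference tuple forces $\overline{bc\ast}=\overline{ab\ast}-(n-4)\leq 2$ and $\overline{ac\ast}=\overline{ab\ast}-(n-4)/2\leq n/2$, whence
$$p_{11}=\overline{ab\ast}+\overline{ac\ast}+\overline{bc\ast}-3\leq (n-2)+\tfrac{n}{2}+2-3=\tfrac{3(n-2)}{2}.$$
Combined with $p_{11}\geq 2n-7$ this gives $3n-6\geq 4n-14$, i.e.\ $n\leq 8$. This is the paper's one-line proof; in your notation the constant $c_0$ can simply be taken to be $2$, which holds for free, and the sharper claim $\overline{bc\ast}\leq 1$ is both unproven and unnecessary.
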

\begin{proof}
	In this case, we observe that $p_{11}=\overline{\Gamma(abc)}$ is at most $(n-3)+(n-2)/2+1=3(n-2)/2$. Therefore $3(n-2)/2\geq 2n-7$ by assumption, and so $n\leq 8$.
\end{proof}

\begin{lemma}[Case \labelcref{case:III}]
	Let $abc$ be a vertex of $\Gamma$ with $\overline{abc}=1$ and difference tuple $(0,n-4)$. Then $abc$ has the following nb-array (up to reordering).
	$$
	\Gamma(abc):
	\begin{array}{c}
	1\\
	1\\
	1
	\end{array}
	\begin{array}{ccccccccccc}
	1 & 1 & \dots & \dots & \dots & \dots & \dots & 1 & 1 & 1 & 1\\
	1 & 1 & \dots & \dots & \dots & \dots & \dots & 1 & 1 & 1 & 1\\
	0 & 0 & \dots & \dots & \dots & \dots & \dots & 0 & 0 & 0 & 0
	\end{array}
	\begin{array}{c}  
	~\leftarrow ab\text{-row} \\
	~\leftarrow ac\text{-row} \\
	~\leftarrow bc\text{-row} 
	\end{array}
	$$
\end{lemma}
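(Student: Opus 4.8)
The plan is to read the three clique sums $\overline{ab\ast},\overline{ac\ast},\overline{bc\ast}$ off the difference tuple, show that only one assignment survives the standing hypothesis $p_{11}\ge 2n-7$ from Section \ref{sec:Jt2}, and then observe that this assignment forces the displayed array. Throughout I keep the ordering $\overline{ab\ast}\ge\overline{ac\ast}\ge\overline{bc\ast}$ that defines the difference tuple.

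First I would convert the tuple $(0,n-4)$ into arithmetic: it says exactly that $\overline{ab\ast}=\overline{ac\ast}$ and $\overline{ac\ast}-\overline{bc\ast}=n-4$. Since $abc\in X_1$ lies in the clique $bc\ast$, we have $\overline{bc\ast}\ge\overline{abc}=1$, while every clique sum is at most the clique size $n-2$. Hence $n-3\le\overline{ac\ast}\le n-2$, leaving only the two possibilities $(\overline{ab\ast},\overline{ac\ast},\overline{bc\ast})=(n-3,n-3,1)$ and $(n-2,n-2,2)$.

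Next I would eliminate the first possibility. Because $abc$ is counted once in each of the three cliques through it (equivalently by Lemma \ref{lem:localIndicator} with $\theta+3=n-4$), we have $\overline{\Gamma(abc)}=\overline{ab\ast}+\overline{ac\ast}+\overline{bc\ast}-3$, and since $abc\in X_1$ this equals $p_{11}$. The assignment $(n-3,n-3,1)$ gives $p_{11}=2n-8$, which contradicts $p_{11}\ge 2n-7$; indeed this is precisely the symmetric case $p_{12}=p_{21}=n-1$ already classified in \cite{GG_2013} and set aside at the start of Section \ref{sec:Jt2}. Thus $(\overline{ab\ast},\overline{ac\ast},\overline{bc\ast})=(n-2,n-2,2)$, so $p_{11}=2n-5$.

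Finally, with this assignment the $ab$- and $ac$-rows each carry $\overline{ab\ast}-\overline{abc}=\overline{ac\ast}-\overline{abc}=n-3$ ones distributed over the $n-3$ columns of $\Gamma(abc)$, so both rows are identically $1$; the $bc$-row carries $\overline{bc\ast}-\overline{abc}=1$, hence a single $1$. The unique column on which the $bc$-row equals $1$ therefore reads $(1,1,1)$ and every other column reads $(1,1,0)$; reordering columns so that the all-ones column comes first yields exactly the stated array. I expect no serious obstacle: unlike Cases \labelcref{case:I,case:II}, the entire argument is carried by the clique-sum bounds together with the standing assumption, and Lemma \ref{lem:eqq11} is not needed. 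The only point demanding care is the constant bookkeeping between clique sums $\overline{ij\ast}$ (which include the contribution of $abc$) and the row sums that appear in the nb-array, and recognising that the eliminated branch is exactly the previously treated symmetric quotient matrix.
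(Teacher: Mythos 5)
Your proposal is correct and follows essentially the same route as the paper: both bound the three clique sums via the tuple $(0,n-4)$ together with $1\le\overline{bc\ast}$ and $\overline{ij\ast}\le n-2$, use the standing assumption $p_{11}\ge 2n-7$ to rule out the assignment giving $p_{11}=2n-8$, and conclude $\overline{ab\ast}=\overline{ac\ast}=n-2$, $\overline{bc\ast}=2$, which forces the displayed array. Your version is merely a more explicit write-up of the paper's one-line elimination, with the (correct) extra observation that the discarded branch is the symmetric case $p_{12}=p_{21}=n-1$ already set aside.
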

\begin{proof}	
	In this case, we observe that $p_{11}=\overline{\Gamma(abc)}$ is at most $(n-3)+n-3+1=2n-5$. Furthermore, we must have 
	$\overline{ab\ast}\geq n-2$, as otherwise $p_{11}\leq (n-4)+(n-4)+0=2n-8$. Therefore, $\overline{ab\ast}=n-2$ and $p_{11}=2n-5$. 
	
	From this we deduce that the only possible nb-arrays (up to reordering) are the following.
\end{proof}

\begin{lemma}[Case \labelcref{case:IV}]
	Let $abc$ be a vertex of $\Gamma$ with $\overline{abc}=1$ and difference tuple $((n-4)/2,0)$. Then $abc$ has one of the following nb-arrays (up to reordering).
	\begin{enumerate}[(i)]
		\item\label{case:IVi} 
		$$
		\Gamma(abc):
		\begin{array}{ccccc}
		1 & \dots & \dots & 1 & 1\\
		1 & \dots & \dots & 1 & 1\\
		1 & \dots & \dots & 1 & 1
		\end{array}
		\begin{array}{ccccc}
		1 & 1 & \dots & \dots & 1\\
		0 & 0 & \dots & \dots & 0\\
		0 & 0 & \dots & \dots & 0
		\end{array}
		\begin{array}{c}  
		~\leftarrow ab\text{-row} \\
		~\leftarrow ac\text{-row} \\
		~\leftarrow bc\text{-row} 
		\end{array}
		$$
		\item\label{case:IVii}
		$$
		\Gamma(abc):
		\begin{array}{c}
		1\\
		1\\
		0
		\end{array}
		\begin{array}{c}
		1\\
		0\\
		1
		\end{array}
		\begin{array}{cccc}
		1 & \dots & \dots & 1\\
		1 & \dots & \dots & 1\\
		1 & \dots & \dots & 1
		\end{array}
		\begin{array}{cccc}
		1 & \dots & \dots & 1\\
		0 & \dots & \dots & 0\\
		0 & \dots & \dots & 0
		\end{array}
		\begin{array}{c}  
		~\leftarrow ab\text{-row} \\
		~\leftarrow ac\text{-row} \\
		~\leftarrow bc\text{-row} 
		\end{array}
		$$
	\end{enumerate}
\end{lemma}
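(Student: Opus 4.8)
The plan is to pin down the three row-sums $\overline{ab\ast}$, $\overline{ac\ast}$, $\overline{bc\ast}$ exactly and then simply read off the column patterns of the nb-array. Since $\overline{abc}=1$ and $abc$ belongs to each of the cliques $ab\ast$, $ac\ast$, $bc\ast$ but is not one of its own neighbours, summing the three clique-indicators over-counts the centre three times, so $p_{11}=\overline{\Gamma(abc)}=\overline{ab\ast}+\overline{ac\ast}+\overline{bc\ast}-3$. Substituting the difference tuple $\overline{ab\ast}-\overline{ac\ast}=(n-4)/2$ and $\overline{ac\ast}=\overline{bc\ast}$ gives $p_{11}=3\overline{ac\ast}+(n-10)/2$. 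First I would feed this into the two available bounds: the standing assumption $p_{11}\geq 2n-7$ forces $\overline{ac\ast}\geq n/2$, while the clique bound $\overline{ab\ast}\leq n-2$ together with $\overline{ab\ast}=\overline{ac\ast}+(n-4)/2$ forces $\overline{ac\ast}\leq n/2$. Hence $\overline{ac\ast}=\overline{bc\ast}=n/2$, $\overline{ab\ast}=n-2$, and $p_{11}=2n-5$.

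The crucial payoff of $\overline{ab\ast}=n-2$ is that the entire $ab$-row lies in $X_1$: every column $i$ satisfies $\overline{abi}=1$. This renders the part~(2) restrictions of Lemma~\ref{lem:eqq11} vacuous (each of them requires some top-row entry to be $0$) and, more importantly, it excludes any column with a $0$ in the top row. The only freedom left is then the pattern $(\overline{aci},\overline{bci})$ in the bottom two rows, so every column has one of the four shapes $(1,1,1)$, $(1,1,0)$, $(1,0,1)$, $(1,0,0)$; let their multiplicities be $n_8,n_7,n_6,n_5$. The equalities $\overline{ac\ast}=\overline{bc\ast}=n/2$ read $n_7+n_8=n_6+n_8=n/2-1$, whence $n_6=n_7$.

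Next I would bound $n_6$ and $n_7$ by applying part~(1) of Lemma~\ref{lem:eqq11} to the two equal rows $ac\ast$ and $bc\ast$, after the relabelling that makes $c$ the shared index. In one orientation this forbids two columns $d,e$ with $\overline{acd}=\overline{ace}=1$ and $\overline{bcd}=\overline{bce}=0$, that is, two columns of shape $(1,1,0)$, giving $n_7\leq 1$; the symmetric orientation gives $n_6\leq 1$. With $n_6=n_7$ this leaves exactly two possibilities. If $n_6=n_7=0$ then $n_8=n/2-1=(n-2)/2$ and $n_5=(n-4)/2$, which after reordering is array~\ref{case:IVi}; if $n_6=n_7=1$ then $n_8=(n-4)/2$ and $n_5=(n-6)/2$, which is array~\ref{case:IVii}. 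A final tally verifying that both arrays sum to $p_{11}=2n-5$ confirms consistency, and permuting columns within each constant block accounts for the ``up to reordering'' clause.

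The main obstacle I anticipate is the opening pincer step: everything downstream is clean counting plus one invocation of Lemma~\ref{lem:eqq11}, but it is the simultaneous use of the lower bound $p_{11}\geq 2n-7$ and the clique bound $\overline{ab\ast}\leq n-2$ that collapses $\overline{ac\ast}$ to the single value $n/2$ and thereby forces the whole $ab$-row into $X_1$. Without that exact value one would have to contend with all-zero and mixed top-row columns, which the two displayed arrays implicitly exclude. I would also take care that the relabelling used to apply Lemma~\ref{lem:eqq11}(1) to the rows $ac\ast,bc\ast$ (rather than the literal rows $ab\ast,ac\ast$ of its statement) is carried out correctly, since that is where an index slip is easiest.
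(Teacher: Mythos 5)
Your proposal is correct and follows essentially the same route as the paper: both pin down $\overline{ab\ast}=n-2$ and $p_{11}=2n-5$ by squeezing between the clique bound and the standing assumption $p_{11}\geq 2n-7$, and both then invoke Lemma \ref{lem:eqq11}(1) on the equal-sum rows $ac\ast,bc\ast$ to cap each mixed column shape at one occurrence, with the balance condition $\overline{ac\ast}=\overline{bc\ast}$ forcing the two mixed shapes to appear in equal numbers. Your explicit multiplicity bookkeeping $(n_5,n_6,n_7,n_8)$ is just a more detailed rendering of the paper's closing count.
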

\begin{proof}
	In this case, we observe that $p_{11}=\overline{\Gamma(abc)}$ is at most $(n-3)+(n-2)/2+(n-2)/2=2n-5$. Furthermore, we must have  $\overline{ab\ast}\geq n-2$, as otherwise  $p_{11}\leq (n-3-1) + 2(n-3-1-(n-4)/2)= 2n-8$. Therefore, $\overline{ab\ast}=n-2$ and $p_{11}=2n-5$. 
	
	Using Lemma \ref{lem:eqq11} 1., we see that any two distinct columns of the nb-array of $\Gamma(abc)$ (up to reordering) cannot look like one of the following pairs. 
	$$
	\begin{array}{cc}
	1 & 1\\
	0 & 0\\
	1 & 1
	\end{array} \text{ or } 
	\begin{array}{cc}
	1 & 1\\
	1 & 1\\
	0 & 0
	\end{array}
	\begin{array}{c}  
	~\leftarrow ab\text{-row} \\
	~\leftarrow ac\text{-row} \\
	~\leftarrow bc\text{-row} 
	\end{array}
	$$
	Furthermore, if we have a single column from one of the above pairs present in the nb-array, then a column of the other pair must also be present, as $\overline{ac\ast}=\overline{bc\ast}$.
	
	Therefore, the only possible nb-arrays (up to reordering) are the above.
\end{proof}

\begin{lemma}[Case \labelcref{case:V}]
	Let $abc$ be a vertex of $\Gamma$ with $\overline{abc}=1$ and difference tuple $(0,(n-4)/2)$. Then $abc$ has one of the following nb-arrays (up to reordering).
	\begin{enumerate}[(i)]
		\item\label{case:Vi} 
		$$
		\Gamma(abc):
		\begin{array}{cccc}
		1 & \dots & \dots & 1\\
		1 & \dots & \dots & 1\\
		1 & \dots & \dots & 1
		\end{array}
		\begin{array}{cccc}
		1 & \dots & \dots & 1\\
		1 & \dots & \dots & 1\\
		0 & \dots & \dots & 0
		\end{array}
		\begin{array}{cccc}
		0 & \dots & \dots & 0\\
		0 & \dots & \dots & 0\\
		0 & \dots & \dots & 0
		\end{array}
		\begin{array}{c}  
		~\leftarrow ab\text{-row} \\
		~\leftarrow ac\text{-row} \\
		~\leftarrow bc\text{-row} 
		\end{array}
		$$
		(This case occurs in the partition $\Pi_1$ found in Lemma \ref{thm:knJn3}, for which $p_{11}=5n/2-7$.) 
		\item\label{case:Vii}
		$$
		\Gamma(abc):
		\begin{array}{c}
		1\\
		0\\
		0
		\end{array}
		\begin{array}{c}
		0\\
		1\\
		0
		\end{array}
		\begin{array}{cccc}
		1 & \dots & \dots & 1\\
		1 & \dots & \dots & 1\\
		1 & \dots & \dots & 1
		\end{array}
		\begin{array}{cccc}
		1 & \dots & \dots & 1\\
		1 & \dots & \dots & 1\\
		0 & \dots & \dots & 0
		\end{array}
		\begin{array}{cccc}
		0 & \dots & \dots & 0\\
		0 & \dots & \dots & 0\\
		0 & \dots & \dots & 0
		\end{array}
		\begin{array}{c}  
		~\leftarrow ab\text{-row} \\
		~\leftarrow ac\text{-row} \\
		~\leftarrow bc\text{-row} 
		\end{array}
		$$
		(This case occurs in the partition $\Pi_3$ found in Lemma \ref{thm:knJn3}, for which $p_{11}=5n/2-13$.) 
	\end{enumerate}
\end{lemma}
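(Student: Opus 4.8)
The plan is to reduce the statement to a handful of linear constraints on the multiplicities of the eight possible column types of the nb-array of $abc$, and then to eliminate every column type that does not appear in arrays (i) or (ii). For each index $d\in[n]\setminus\{a,b,c\}$ I encode the corresponding column as the triple $(\overline{abd},\overline{acd},\overline{bcd})\in\{0,1\}^3$; this gives eight column types, whose multiplicities I denote $n_{111},n_{110},n_{101},n_{011},n_{100},n_{010},n_{001},n_{000}$, with total $n-3$. Since $\overline{ab\ast}=\overline{abc}+\sum_d\overline{abd}=1+\sum_d\overline{abd}$ and likewise for the other two rows, the hypothesis $\overline{ab\ast}=\overline{ac\ast}$ becomes $n_{101}+n_{100}=n_{011}+n_{010}$, call it $(\ast)$, while $\overline{ac\ast}-\overline{bc\ast}=(n-4)/2$ becomes $n_{110}+n_{010}-n_{101}-n_{001}=(n-4)/2$, call it $(\ast\ast)$.

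Next I would read off the forbidden-pattern restrictions by substituting the three row-difference values into Equation \eqref{eq:eq2} for the three ordered pairs of rows $(ab,ac)$, $(ac,bc)$, $(ab,bc)$. For the equal pair $(ab,ac)$ the bracket of \eqref{eq:rowdiff} must vanish; this is exactly Lemma~\ref{lem:eqq11}(1) (applied in both orientations) and forbids two columns that agree as $(1,0)$ in the top two rows, and two that agree as $(0,1)$, giving $n_{101}+n_{100}\le 1$ and $n_{011}+n_{010}\le 1$. For the two strictly ordered pairs the bracket equals $1$: the pair $(ac,bc)$ yields $n_{101}+n_{001}\le 1$, together with the exclusion that any column with $(\overline{acd},\overline{bcd})=(0,1)$ rules out \emph{every} column $e$ with $(\overline{ace},\overline{bce})\in\{(0,0),(1,1)\}$; the pair $(ab,bc)$ yields symmetrically $n_{011}+n_{001}\le 1$ with the analogous exclusion read in the first and third rows. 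Each of these follows directly by noting the right-hand bracket would otherwise force a difference of two $\{0,1\}$-indicators to exceed $1$.

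Then I would eliminate the three column types absent from (i) and (ii), namely $(1,0,1)$, $(0,1,1)$ and $(0,0,1)$. Suppose $n_{001}\ge 1$. The two "at most one" bounds force $n_{101}=n_{011}=0$ and $n_{001}=1$, and the two exclusions kill every surviving type except $n_{110}$; then the total $n-3$ and $(\ast\ast)$ give $n_{110}=n-4$ and $n-5=(n-4)/2$, i.e.\ $n=6$, contradicting $n>6$. The cases $n_{101}\ge 1$ and $n_{011}\ge 1$ are symmetric: the bounds and exclusions again leave only two column types, and $(\ast)$, $(\ast\ast)$ together with the total again force $n=6$. Hence $n_{101}=n_{011}=n_{001}=0$.

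With those three types removed, $(\ast)$ reads $n_{100}=n_{010}$, and each is at most $1$, so their common value is $0$ or $1$. The value $0$ leaves only the types $(1,1,1),(1,1,0),(0,0,0)$, which after grouping columns into blocks is array (i); the value $1$ adds exactly one $(1,0,0)$ column and one $(0,1,0)$ column, giving array (ii). I expect the only delicate point to be the second step: for each ordered pair of rows one must read off from \eqref{eq:eq2} precisely which column types are mutually exclusive, and the whole elimination in step three rests on getting these exclusions right; once they are in hand, the remaining work is routine manipulation of the linear relations $(\ast)$, $(\ast\ast)$ and the total.
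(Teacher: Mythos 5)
Your proposal is correct and follows essentially the same route as the paper: both rest on Lemma \ref{lem:eqq11} (applied to the three pairs of rows via Equation \eqref{eq:rowdiff}) to derive the same forbidden column pairs, then eliminate the column types $(1,0,1)$, $(0,1,1)$, $(0,0,1)$ by showing each would force $n\leq 6$, and finally observe that the types $(1,0,0)$ and $(0,1,0)$ occur at most once and only together. Your bookkeeping via multiplicities and the linear relations $(\ast)$, $(\ast\ast)$ is just a more explicit rendering of the paper's column-by-column argument.
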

\begin{proof}	
	Using Lemma \ref{lem:eqq11}, we see that any two distinct columns of the nb-array of $\Gamma(abc)$ (up to reordering) cannot look like the following pairs.
	$$
	\underbrace{\begin{array}{cc}
		1 & 1\\
		0 & 0\\
		* & *
		\end{array} \text{ or } 
		\begin{array}{cc}
		0 & 0\\
		1 & 1\\
		* & *
		\end{array}}_{\text{ by Lemma \ref{lem:eqq11} 1.}}\qquad
	\underbrace{\begin{array}{cc}
		0 & 0\\
		* & *\\
		1 & 0
		\end{array} \text{ or } 
		\begin{array}{cc}
		* & *\\
		0 & 0\\
		1 & 0
		\end{array}}_{\text{ by Lemma \ref{lem:eqq11} 2(a).} }
	$$
	$$
	\underbrace{\begin{array}{cc}
		0 & 1\\
		* & *\\
		1 & 1
		\end{array} \text{ or } 
		\begin{array}{cc}
		* & *\\
		0 & 1\\
		1 & 1
		\end{array}}_{\text{ by Lemma \ref{lem:eqq11} 2(b).} }\qquad
	\underbrace{\begin{array}{cc}
		* & *\\
		0 & 0\\
		1 & 1
		\end{array} \text{ or }
		\begin{array}{cc}
		0 & 0\\
		* & *\\
		1 & 1
		\end{array}}_{\text{ by Lemma \ref{lem:eqq11} 2(c).} }
	$$
	(where any entry $\ast$ can take either of the values 0 or 1).
	
	Now suppose we have a column 
	$$
	\begin{array}{c}
	* \\
	0 \\
	1
	\end{array}\begin{array}{c}  
	~\leftarrow ab\text{-row} \\
	~\leftarrow ac\text{-row} \\
	~\leftarrow bc\text{-row} 
	\end{array}
	$$
	By the above restrictions on pairs of columns, any other column of the 
	nb-array must look like
	$$
	\begin{array}{c}
	* \\
	1 \\
	0
	\end{array}\begin{array}{c}  
	~\leftarrow ab\text{-row} \\
	~\leftarrow ac\text{-row} \\
	~\leftarrow bc\text{-row} 
	\end{array}
	$$
	This shows us that $\overline{ac\ast}=n-3$ and $\overline{bc\ast}=2$. But this gives a contradiction to the fact that $\overline{ac\ast}-\overline{bc\ast}=(n-4)/2$ and $n>6$.
	
	By a similar argument, we can show that we cannot have the column 
	$$
	\begin{array}{c}
	0 \\
	* \\
	1
	\end{array}\begin{array}{c}  
	~\leftarrow ab\text{-row} \\
	~\leftarrow ac\text{-row} \\
	~\leftarrow bc\text{-row} 
	\end{array},
	$$
	and so the possible columns of the nb-array  are 
	$$
	\begin{array}{c}
	1 \\
	1 \\
	1
	\end{array}\qquad\begin{array}{c}
	1 \\
	1 \\
	0
	\end{array}\qquad\begin{array}{c}
	0 \\
	0 \\
	0
	\end{array}\qquad\begin{array}{c}
	1 \\
	0 \\
	0
	\end{array}\qquad
	\begin{array}{c}
	0 \\
	1 \\
	0
	\end{array}\begin{array}{c}  
	~\leftarrow ab\text{-row} \\
	~\leftarrow ac\text{-row} \\
	~\leftarrow bc\text{-row} 
	\end{array},
	$$
	By the restrictions on pairs of columns from Lemma \ref{lem:eqq11} 1., the last two columns in the list can occur at most once. If one of the last two columns are present in the nb-array, the other must be present, as $\overline{ab\ast}=\overline{ac\ast}$ (this will be called case (\labelcref{case:Vii})).
	
	From the discussion above, we deduce that the only possible nb-arrays (up to reordering) are the above.
\end{proof}

\begin{lemma}[Case \labelcref{case:VI}]
	Let $abc$ be a vertex of $\Gamma$ with $\overline{abc}=1$ and difference tuple $(0,0)$. Then $abc$ has one of the following nb-arrays (up to reordering).
	\begin{enumerate}[(i)]
		\item\label{case:VIi} 
		$$
		\Gamma(abc):
		\begin{array}{ccccccc}
		1 & 1 & 1 & 1 & \dots & \dots & 1\\
		1 & 1 & 1 & 1 & \dots & \dots & 1\\
		1 & 1 & 1 & 1 & \dots & \dots & 1
		\end{array}
		\begin{array}{cccc}
		0 & \dots & \dots & 0\\
		0 & \dots & \dots & 0\\
		0 & \dots & \dots & 0
		\end{array}
		\begin{array}{c}  
		~\leftarrow ab\text{-row} \\
		~\leftarrow ac\text{-row} \\
		~\leftarrow bc\text{-row} 
		\end{array}
		$$
		\item\label{case:VIii}
		$$
		\Gamma(abc):
		\begin{array}{c}
		1\\
		1\\
		0
		\end{array}
		\begin{array}{c}
		0\\
		0\\
		1
		\end{array}
		\begin{array}{ccccc}
		1 & 1 & \dots & \dots & 1\\
		1 & 1 & \dots & \dots & 1\\
		1 & 1 & \dots & \dots & 1
		\end{array}
		\begin{array}{cccc}
		0 & \dots & \dots & 0\\
		0 & \dots & \dots & 0\\
		0 & \dots & \dots & 0
		\end{array}
		\begin{array}{c}  
		~\leftarrow ab\text{-row} \\
		~\leftarrow ac\text{-row} \\
		~\leftarrow bc\text{-row} 
		\end{array}
		$$
		\item\label{case:VIiii}
		$$
		\Gamma(abc):
		\begin{array}{c}
		1\\
		0\\
		0
		\end{array}
		\begin{array}{c}
		0\\
		1\\
		0
		\end{array}
		\begin{array}{c}
		0\\
		0\\
		1
		\end{array}
		\begin{array}{cccc}
		1 & \dots & \dots & 1\\
		1 & \dots & \dots & 1\\
		1 & \dots & \dots & 1
		\end{array}
		\begin{array}{cccc}
		0 & \dots & \dots & 0\\
		0 & \dots & \dots & 0\\
		0 & \dots & \dots & 0
		\end{array}
		\begin{array}{c}  
		~\leftarrow ab\text{-row} \\
		~\leftarrow ac\text{-row} \\
		~\leftarrow bc\text{-row} 
		\end{array}
		$$
		(This case occurs in the partition $\Pi_2$ found in Lemma \ref{thm:knJn3}, for which $p_{11}=3(n-5)$.) 
		
		\item\label{case:VIiv}
		$$
		\Gamma(abc):
		\begin{array}{c}
		1\\
		1\\
		0
		\end{array}
		\begin{array}{c}
		1\\
		0\\
		1
		\end{array}
		\begin{array}{c}
		0\\
		1\\
		1
		\end{array}
		\begin{array}{cccc}
		1 & \dots & \dots & 1\\
		1 & \dots & \dots & 1\\
		1 & \dots & \dots & 1
		\end{array}
		\begin{array}{cccc}
		0 & \dots & \dots & 0\\
		0 & \dots & \dots & 0\\
		0 & \dots & \dots & 0
		\end{array}
		\begin{array}{c}  
		~\leftarrow ab\text{-row} \\
		~\leftarrow ac\text{-row} \\
		~\leftarrow bc\text{-row} 
		\end{array}
		$$
	\end{enumerate}
\end{lemma}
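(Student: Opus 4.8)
Since the difference tuple is $(0,0)$, all three row sums coincide: $\overline{ab\ast}=\overline{ac\ast}=\overline{bc\ast}$. The plan is to exploit this three-fold equality together with Lemma~\ref{lem:eqq11}(1). Because every pair of rows has equal sum, I would apply Equation~\eqref{eq:rowdiff} to each of the three pairs of rows and extract both directions of the resulting inequality: if columns $d,e$ satisfy $\overline{abd}=\overline{abe}=1$ and $\overline{acd}=\overline{ace}=0$, the right-hand side is $\tfrac{n-4}{2}(2+\overline{cde}-\overline{bde})\ge\tfrac{n-4}{2}>0$ while the left-hand side is $0$, a contradiction; the reversed pattern gives a contradiction symmetrically. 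Hence, for each pair of rows, at most one column carries the pattern $(1,0)$ and at most one carries $(0,1)$ on that pair.

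Encode each column by its entry vector in $\{0,1\}^3$ in the order $(ab,ac,bc)$, and let $n_\tau$ be the number of columns of type $\tau$. The constant types $(1,1,1)$ and $(0,0,0)$ are unconstrained, whereas the six mixed types are governed by the six bounds just obtained, one per pair of rows per direction (for instance $n_{(1,0,1)}+n_{(1,0,0)}\le 1$ for the $(ab,ac)$ pair); in particular each mixed type occurs at most once. The equalities $\overline{ab\ast}=\overline{ac\ast}=\overline{bc\ast}$ become balance equations among these multiplicities, e.g. $n_{(1,0,1)}+n_{(1,0,0)}=n_{(0,1,1)}+n_{(0,1,0)}$ together with the analogues for the other two row pairs.

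With all mixed multiplicities in $\{0,1\}$ and subject to these balance equations, the remaining work is a short finite enumeration over the eight choices of which of the three single-$0$ types $(1,1,0),(1,0,1),(0,1,1)$ are present, solving for the single-$1$ types in each case. This yields only four outcomes: no mixed columns (giving nb-array (i)); all three single-$1$ columns (iii); all three single-$0$ columns (iv); and exactly one single-$0$ column together with exactly one single-$1$ column (ii); every configuration with exactly two single-$0$ columns is eliminated because balance then forces single-$1$ columns that violate one of the pairwise bounds. The point requiring care—more a matter of phrasing than a deep difficulty—is that the last outcome appears in three guises, such as the pair $(1,0,1),(0,1,0)$, which is literally nb-array (ii) only after swapping two rows; this identification is legitimate because in Case~\labelcref{case:VI} all three row sums are equal, so the labelling of $a,b,c$ was arbitrary and ``up to reordering'' must include permutations of the three rows. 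Reading the list modulo this row symmetry leaves exactly the four stated nb-arrays.
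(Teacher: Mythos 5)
Your proposal is correct and follows essentially the same route as the paper: both arguments rest entirely on the restriction from Lemma~\ref{lem:eqq11}(1) that each directional $(1,0)$-pattern on a pair of rows occurs in at most one column, combined with the equal row sums $\overline{ab\ast}=\overline{ac\ast}=\overline{bc\ast}$; your enumeration over column-type multiplicities with balance equations is just a more systematic bookkeeping of the paper's direct case split on whether a column with two ones (resp.\ one one) is present. Your explicit observation that ``up to reordering'' must include permutations of the three rows is a point the paper uses implicitly (via its ``without loss of generality'' choices) but does not state, so making it explicit is a small improvement rather than a deviation.
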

\begin{proof}
	Using Lemma \ref{lem:eqq11} 1., we see that any two distinct columns of the nb-array of $\Gamma(abc)$ (up to reordering) cannot look like one of the following pairs. 
	$$
	\begin{array}{cc}
	1 & 1\\
	0 & 0\\
	* & *
	\end{array} \qquad 
	\begin{array}{cc}
	0 & 0\\
	1 & 1\\
	* & *
	\end{array} \qquad
	\begin{array}{cc}
	1 & 1\\
	* & *\\
	0 & 0
	\end{array} \qquad 
	\begin{array}{cc}
	0 & 0\\
	* & *\\
	1 & 1
	\end{array} \qquad
	\begin{array}{cc}
	* & *\\
	1 & 1\\
	0 & 0
	\end{array} \qquad 
	\begin{array}{cc}
	* & *\\
	0 & 0\\
	1 & 1
	\end{array}$$
	(where any entry $\ast$ can take either of the values 0 or 1).

	Suppose we have a column in the nb-array with exactly two entries equal to 1 (this will split into the cases (\labelcref{case:VIii}) and (\labelcref{case:VIiv}) in the following). Without loss of generality, let this column be as follows.
	$$
	\begin{array}{c}
	1 \\
	1 \\
	0
	\end{array}\begin{array}{c}  
	~\leftarrow ab\text{-row} \\
	~\leftarrow ac\text{-row} \\
	~\leftarrow bc\text{-row} 
	\end{array}
	$$
	As $\overline{ac\ast}=\overline{bc\ast}$, at least one other column must be of the form
	$$
	\begin{array}{c}
	* \\
	0 \\
	1
	\end{array}\begin{array}{c}  
	~\leftarrow ab\text{-row} \\
	~\leftarrow ac\text{-row} \\
	~\leftarrow bc\text{-row} 
	\end{array}
	$$
	Suppose we are in the case (this will be called case (\labelcref{case:VIiv})) that we have columns 
	$$
	\begin{array}{cc}
	1 & 1 \\
	1 & 0 \\
	0 & 1
	\end{array}\begin{array}{c}  
	~\leftarrow ab\text{-row} \\
	~\leftarrow ac\text{-row} \\
	~\leftarrow bc\text{-row} 
	\end{array}
	$$
	We use the fact that $\overline{ab\ast}=\overline{ac\ast}$ and the restrictions on the columns to find that we must have the three columns
	$$
	\begin{array}{ccc}
	1 & 1 & 0\\
	1 & 0 & 1\\
	0 & 1 & 1
	\end{array}\begin{array}{c}  
	~\leftarrow ab\text{-row} \\
	~\leftarrow ac\text{-row} \\
	~\leftarrow bc\text{-row} 
	\end{array}.
	$$
	From here, it is straightforward to see that each of the remaining columns must all three entries equal.  
	
	Now suppose we are in the case (this will be called case (\labelcref{case:VIii})) where instead, we have columns
	$$
	\begin{array}{cc}
	1 & 0 \\
	1 & 0 \\
	0 & 1
	\end{array}\begin{array}{c}  
	~\leftarrow ab\text{-row} \\
	~\leftarrow ac\text{-row} \\
	~\leftarrow bc\text{-row} 
	\end{array}
	$$
	Using the restrictions on pairs of columns, we see that any other column cannot be of the form
	$$
	\begin{array}{c}
	* \\
	0 \\
	1
	\end{array} \text{ or }\begin{array}{c}
	0 \\
	* \\
	1
	\end{array}\begin{array}{c}  
	~\leftarrow ab\text{-row} \\
	~\leftarrow ac\text{-row} \\
	~\leftarrow bc\text{-row} 
	\end{array}
	$$
	This means that any non-zero entry which contributes positively to the sum $\overline{bc\ast}$ must be in a column with all entries equal to 1. As $\overline{ab\ast}=\overline{ac\ast}=\overline{bc\ast}$, we deduce that all columns with at least one entry equal to 1 must have all entries equal to 1. 
	
	Now suppose we have no columns with exactly two entries equal to 1 (this will split into the cases (\labelcref{case:VIi}) and (\labelcref{case:VIiii}) in the following). Further suppose there is a column with exactly one entry equal to 1 (this will be called case (\labelcref{case:VIiii})). Without loss of generality, let this column be as follows.
	$$
	\begin{array}{c}
	1 \\
	0 \\
	0
	\end{array}\begin{array}{c}  
	~\leftarrow ab\text{-row} \\
	~\leftarrow ac\text{-row} \\
	~\leftarrow bc\text{-row} 
	\end{array}
	$$
	By the restrictions on the pairs of columns, the assumption we have no columns with two entries equal to 1, and $\overline{ab\ast}=\overline{ac\ast}=\overline{bc\ast}$,
	we must have the three columns.
	$$
	\begin{array}{ccc}
	1 & 0 & 0\\
	0 & 1 & 0\\
	0 & 0 & 1
	\end{array}\begin{array}{c}  
	~\leftarrow ab\text{-row} \\
	~\leftarrow ac\text{-row} \\
	~\leftarrow bc\text{-row} 
	\end{array}
	$$
	From here, it is straightforward to see that each of the remaining columns must all three entries equal.  
	
	The last case is where all entries of a single column are equal (this will be called case (\labelcref{case:VIi})). 
	
	In the discussion above, we use Lemma \ref{lem:eqq11} 1. to deduce that the only possible nb-arrays (up to reordering) are the following.
\end{proof}

\section{Local structure for large $n$}\label{sec:largen}

In this section, we will see that for $n$ large enough, many of the above nb-arrays cannot occur. In particular, for $n>14$ we have only three possible nb-arrays.

\subsection{Removing Cases \labelcref{case:III} and \labelcref{case:IV}(\labelcref{case:IVii})}

From now on, we will assume we do not see the cases \labelcref{case:I} or \labelcref{case:II} for any vertex in $\Gamma$. Note that when $n>8$, we know that these cases cannot occur. We will then prove that the cases \labelcref{case:III} and \labelcref{case:IV}(\labelcref{case:IVii}) cannot occur. First we will prove that these cases always occur together.

\begin{lemma}\label{lem:cIIIcIVii}
	Let $abc$ be a vertex of $\Gamma$ with $\overline{abc}=1$, and let $d,e\in \left[n\right]\setminus \{a,b,c\}$ be distinct. Then we have nb-array
	$$
	\Gamma(abc):
	\underbrace{
		\begin{array}{c}
		1\\
		1\\
		1
		\end{array}
	}_{d}
	\begin{array}{ccccccccccc}
	1 & 1 & \dots & \dots & \dots & \dots & \dots & 1 & 1 & 1 & 1\\
	1 & 1 & \dots & \dots & \dots & \dots & \dots & 1 & 1 & 1 & 1\\
	0 & 0 & \dots & \dots & \dots & \dots & \dots & 0 & 0 & 0 & 0
	\end{array}
	\begin{array}{c}  
	~\leftarrow ab\text{-row} \\
	~\leftarrow ac\text{-row} \\
	~\leftarrow bc\text{-row} 
	\end{array}
	$$
	if and only if we have nb-array
	$$
	\Gamma(abe):
	\underbrace{
		\begin{array}{c}
		1\\
		1\\
		0
		\end{array}
	}_{c}
	\underbrace{
		\begin{array}{c}
		1\\
		0\\
		1
		\end{array}
	}_{d}
	\underbrace{
		\begin{array}{cccc}
		1 & \dots & \dots & 1\\
		1 & \dots & \dots & 1\\
		1 & \dots & \dots & 1
		\end{array}
	}_{}
	\underbrace{
		\begin{array}{cccc}
		1 & \dots & \dots & 1\\
		0 & \dots & \dots & 0\\
		0 & \dots & \dots & 0
		\end{array}
	}_{}
	\begin{array}{c}  
	~\leftarrow ab\text{-row} \\
	~\leftarrow ae\text{-row} \\
	~\leftarrow be\text{-row} 
	\end{array}
	$$
\end{lemma}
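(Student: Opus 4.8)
The plan is to prove each implication by first reading off the indicator values that the given nb-array forces, and then using the difference identity of Lemma~\ref{lem:eq2} to determine the handful of entries (those involving the element $e$ paired with a generic index) that the array does not control directly. Throughout I use $\theta+3=\lambda_2+3=n-4$, and the fact that both vertices $abc$ and $abe$ lie in $X_1$, so they have $p_{11}=\overline{\Gamma(\cdot)}=2n-5$ and $p_{21}=n+2$ (the quotient matrix is the one of Case~\labelcref{case:III}).

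For the forward direction I would start from the nb-array of $abc$, which says exactly that $ab\ast$ and $ac\ast$ lie entirely in $X_1$, that $\overline{bcd}=1$, and that $\overline{bce}=0$ for every $e\in[n]\setminus\{a,b,c,d\}$. Fix such an $e$; then $\overline{abe}=\overline{ace}=1$. By Lemma~\ref{lem:localIndicator} applied to $abe$ we get $\overline{ab\ast}+\overline{ae\ast}+\overline{be\ast}=2n-2$, hence $\overline{ae\ast}+\overline{be\ast}=n$ since $\overline{ab\ast}=n-2$. Applying Equation~\eqref{eq:eq2} to the clique pairs $(ab\ast,ae\ast)$ and $(ab\ast,be\ast)$ with auxiliary elements $c,d$ and substituting the known values yields
\[
\overline{ab\ast}-\overline{ae\ast}=\tfrac{n-4}{2}\bigl(\overline{cde}-\overline{ade}\bigr),\qquad \overline{ab\ast}-\overline{be\ast}=\tfrac{n-4}{2}\bigl(1+\overline{cde}-\overline{bde}\bigr).
\]
Adding these and using $\overline{ae\ast}+\overline{be\ast}=n$ collapses everything to $\overline{ade}+\overline{bde}=2\overline{cde}-1$; as the left-hand side is nonnegative this forces $\overline{cde}=1$ and $\overline{ade}+\overline{bde}=1$ for every such $e$.

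The crux is to break the residual ambiguity between $(\overline{ade},\overline{bde})=(0,1)$ and $(1,0)$. Since $\overline{cde}=1$ for all $e$, and $\overline{cda}=\overline{acd}=1$, $\overline{cdb}=\overline{bcd}=1$, the entire clique $cd\ast$ lies in $X_1$. Now I would look at the vertex $acd\in X_1$: both its $ac$-row and its $cd$-row are identically $1$, so $\overline{\Gamma(acd)}=2n-5$ forces the $ad$-row sum to equal $1$, i.e. $\overline{ad\ast}=2$. As $\overline{adb}=\overline{adc}=1$ already account for this, we conclude $\overline{ade}=0$, hence $\overline{bde}=1$, for every $e$. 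Therefore $\overline{ae\ast}=\overline{be\ast}=n/2$, so $abe$ has difference tuple $((n-4)/2,0)$; its $c$-column is $(1,1,0)$ and its $d$-column is $(1,0,1)$, so by the classification of Case~\labelcref{case:IV} nb-arrays it must be Case~\labelcref{case:IV}(\labelcref{case:IVii}), exactly as claimed.

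The backward direction is symmetric. From the Case~\labelcref{case:IV}(\labelcref{case:IVii}) array of $abe$ I read off $ab\ast\subseteq X_1$ together with $\overline{abc}=\overline{abd}=\overline{ace}=\overline{bde}=1$ and $\overline{ade}=\overline{bce}=0$. The same two applications of Equation~\eqref{eq:eq2} give $\overline{acd}=\overline{bcd}$, and a third application to $(be\ast,bd\ast)$ with auxiliary elements $a,c$ yields $\overline{bd\ast}=\overline{be\ast}+\tfrac{n-4}{2}=n-2$; thus $bd\ast\subseteq X_1$ and in particular $\overline{bcd}=1$, so $\overline{acd}=1$ as well. Feeding $\overline{acd}=\overline{bcd}=1$ into the identity for $(ab\ast,cd\ast)$ shows $cd\ast\subseteq X_1$, whence $\overline{cde}=1$ and then $\overline{ac\ast}=\overline{ab\ast}=n-2$, forcing $\overline{bc\ast}=2$. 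Hence $abc$ has two full rows $ab\ast,ac\ast$, its $bc$-row carries its only off-$a$ one at column $d$, and this is precisely the displayed Case~\labelcref{case:III} array. The main obstacle in both directions is exactly this last step: the difference identities alone leave a one-parameter ambiguity, and pinning it down requires identifying the auxiliary vertex ($acd$ going forwards, and $bd\ast$ going backwards) whose neighbourhood has two full rows, which rigidly fixes the remaining small clique.
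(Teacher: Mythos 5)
Your proof is correct and rests on the same machinery as the paper's -- the row-difference identity \eqref{eq:rowdiff}, the counting identity of Lemma \ref{lem:localIndicator}, and the prior classification of Case \labelcref{case:IV} nb-arrays -- but the chain of deductions is genuinely different in both directions. In the forward direction the paper breaks the $a$/$b$ asymmetry immediately by applying Equation \eqref{eq:rowdiff} to the pair $ab\ast$, $bc\ast$, where $\overline{ab\ast}-\overline{bc\ast}=n-4$ is already known from the array of $abc$; this yields $\overline{cde}=1$ and $\overline{ade}=0$ in one line. You never use the $bc$-row difference: you obtain $\overline{cde}=1$ together with only $\overline{ade}+\overline{bde}=1$ by summing the $ae\ast$ and $be\ast$ differences against Lemma \ref{lem:localIndicator}, and then resolve the residual ambiguity by noting $cd\ast\subseteq X_1$ and counting $\overline{\Gamma(acd)}=2n-5$ to force $\overline{ad\ast}=2$. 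That detour is valid but avoidable. In the backward direction the paper computes $\overline{ac\ast}-\overline{ad\ast}=n-4$ and invokes the fact that row differences lie in $\{0,\pm(n-4)/2,\pm(n-4)\}$ to pin $\overline{ac\ast}=n-2$; you instead deduce $\overline{acd}=\overline{bcd}$, show that $bd\ast$ and then $cd\ast$ are entirely contained in $X_1$, and read off $\overline{ac\ast}=n-2$ via Lemma \ref{lem:lemeq1}. Your route extracts more structural information along the way (the monochromatic cliques $bd\ast$ and $cd\ast$) at the cost of a longer argument, whereas the paper's is shorter because it leans harder on the discreteness of the row differences. All the individual computations in your version check out.
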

\begin{proof}
	($\implies$) Suppose we have the nb-array for $abc$. Then by applying Equation \eqref{eq:rowdiff} to the rows $ab\ast,bc\ast$, we have
	$$
	2=\overline{abd}+\overline{abe}+\overline{cde}-\overline{bcd}-\overline{bce}-\overline{ade}=1+\overline{cde}-\overline{ade}.
	$$
	Therefore, we must have $\overline{cde}=1,\overline{ade}=0$. Applying Equation \eqref{eq:rowdiff} to the rows $ab\ast,ac\ast$, we have $\overline{bde}=1$.

	Further application of Equation \eqref{eq:rowdiff} to the rows $ab\ast,ae\ast$, and using the above, we see that
	$$
	\overline{ab\ast}-\overline{ae\ast}=\frac{n-4}{2}(\overline{abc}+\overline{abe}+\overline{cde}-\overline{ace}-\overline{ade}-\overline{bcd})=\frac{n-4}{2},
	$$
	and so $\overline{ae\ast}=n/2$.
	
	Now consider the nb-array of $abe$. We know the values for columns with indices $c,d$. We have assumed case \labelcref{case:II} does not occur, so the nb-array of $abe$ must be in case \labelcref{case:IV}. Using our knowledge of the $c$-column and $d$-column, we see the nb-array of $abe$ is in case \labelcref{case:IV}(\labelcref{case:IVii}).
	
	($\impliedby$) Suppose the nb-array of $abe$ is of the form above.
	
	Applying Equation \eqref{eq:rowdiff} to the rows $ab\ast,ad\ast$ and $ac\ast,ab\ast$, we see that
	$$
	\overline{ab\ast}-\overline{ad\ast}=\frac{n-4}{2}(\overline{abc}+\overline{abe}+\overline{cde}-\overline{acd}-\overline{ade}-\overline{bce})=\frac{n-4}{2}(2+\overline{cde}-\overline{acd}),
	$$
	and
	$$
	\overline{ac\ast}-\overline{ab\ast}=\frac{n-4}{2}(\overline{acd}+\overline{ace}+\overline{bde}-\overline{abd}-\overline{abe}-\overline{cde})=\frac{n-4}{2}(\overline{acd}-\overline{cde}).
	$$
	Summing these two together, we see that
	$$\overline{ac\ast}-\overline{ad\ast}=n-4$$
	and thus $\overline{ac\ast}\geq n-4$.
	
	Consider the nb-array of vertex $abc$. As the difference $\overline{ab\ast}-\overline{ac\ast}$ is an integer multiple of $(n-4)/2$ and $\overline{ab\ast}=n-2$, we must have $\overline{ac\ast}=n-2$, and the nb-array of $abc$ must be in case \labelcref{case:III}.
\end{proof}

Now we will work to prove that case \labelcref{case:III} leads to a contradiction, proving that cases \labelcref{case:III} and \labelcref{case:IV}(\labelcref{case:IVii}) cannot occur (when $n>8$).

\begin{lemma}\label{lem:cIIIij}
	Let $abc$ be a vertex of $\Gamma$ with $\overline{abc}=1$, and let $d\in \left[n\right]\setminus \{a,b,c\}$ be such that we have nb-array
	$$
	\Gamma(abc):
	\underbrace{
		\begin{array}{c}
		1\\
		1\\
		1
		\end{array}
	}_{d}
	\begin{array}{ccccccccccc}
	1 & 1 & \dots & \dots & \dots & \dots & \dots & 1 & 1 & 1 & 1\\
	1 & 1 & \dots & \dots & \dots & \dots & \dots & 1 & 1 & 1 & 1\\
	0 & 0 & \dots & \dots & \dots & \dots & \dots & 0 & 0 & 0 & 0
	\end{array}
	\begin{array}{c}  
	~\leftarrow ab\text{-row} \\
	~\leftarrow ac\text{-row} \\
	~\leftarrow bc\text{-row} 
	\end{array}
	$$
	
	Then:
	\begin{enumerate}
		\item The nb-array of $abd$ is 
		$$\Gamma(abd):
		\underbrace{
			\begin{array}{c}
			1\\
			1\\
			1
			\end{array}
		}_{c}
		\begin{array}{ccccccccccc}
		1 & 1 & \dots & \dots & \dots & \dots & \dots & 1 & 1 & 1 & 1\\
		1 & 1 & \dots & \dots & \dots & \dots & \dots & 1 & 1 & 1 & 1\\
		0 & 0 & \dots & \dots & \dots & \dots & \dots & 0 & 0 & 0 & 0
		\end{array}
		\begin{array}{c}  
		~\leftarrow ab\text{-row} \\
		~\leftarrow bd\text{-row} \\
		~\leftarrow ad\text{-row} 
		\end{array}.
		$$
		\item For any $i,j\in \left[n\right]\setminus \{a,b,c,d\}$ distinct, we have $\overline{aij}=\overline{bij}=\overline{cij}$.
		\item For any $e\in \left[n\right]\setminus \{a,b,c,d\}$, there exists $I_{\omega},I_{\beta}\subseteq \left[n\right]$ such that we have the following nb-arrays:
		$$
		\Gamma(abe):
		\underbrace{
			\begin{array}{c}
			1\\
			1\\
			0
			\end{array}
		}_{c}
		\underbrace{
			\begin{array}{c}
			1\\
			0\\
			1
			\end{array}
		}_{d}
		\underbrace{
			\begin{array}{cccc}
			1 & \dots & \dots & 1\\
			1 & \dots & \dots & 1\\
			1 & \dots & \dots & 1
			\end{array}
		}_{I_{\omega}}
		\underbrace{
			\begin{array}{cccc}
			1 & \dots & \dots & 1\\
			0 & \dots & \dots & 0\\
			0 & \dots & \dots & 0
			\end{array}
		}_{I_{\beta}}
		\begin{array}{c}  
		~\leftarrow ab\text{-row} \\
		~\leftarrow ae\text{-row} \\
		~\leftarrow be\text{-row} 
		\end{array}
		$$
		
		$$
		\Gamma(ace):
		\underbrace{
			\begin{array}{c}
			1\\
			1\\
			0
			\end{array}
		}_{b}
		\underbrace{
			\begin{array}{c}
			1\\
			0\\
			1
			\end{array}
		}_{d}
		\underbrace{
			\begin{array}{cccc}
			1 & \dots & \dots & 1\\
			1 & \dots & \dots & 1\\
			1 & \dots & \dots & 1
			\end{array}
		}_{I_{\omega}}
		\underbrace{
			\begin{array}{cccc}
			1 & \dots & \dots & 1\\
			0 & \dots & \dots & 0\\
			0 & \dots & \dots & 0
			\end{array}
		}_{I_{\beta}}
		\begin{array}{c}  
		~\leftarrow ac\text{-row} \\
		~\leftarrow ae\text{-row} \\
		~\leftarrow ce\text{-row} 
		\end{array}
		$$
	\end{enumerate}
\end{lemma}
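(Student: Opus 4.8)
The plan is to run everything through the two-clique identity \eqref{eq:eq1}, whose right-hand side collapses to $0$ whenever the four triples it involves all lie in $X_1$. From the hypothesised nb-array of $abc$ I read off $\overline{abc}=\overline{abd}=\overline{acd}=\overline{bcd}=1$, together with $\overline{abi}=\overline{aci}=1$ and $\overline{bci}=0$ for every $i\in\left[n\right]\setminus\{a,b,c,d\}$; these, and the resulting clique sums $\overline{ab\ast}=\overline{ac\ast}=n-2$, $\overline{bc\ast}=2$, are the only facts I feed into \eqref{eq:eq1} and \eqref{eq:eq2}. The one real observation driving the whole proof is that each time I apply \eqref{eq:eq1} the relevant four triples turn out to be exactly $\{abc,abd,acd,bcd\}$, so the bracket vanishes and the two clique sums are forced equal.

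For Part~1 I would compute the three clique sums attached to $abd$. Applying \eqref{eq:eq1} to the disjoint pairs $\{a,d\},\{b,c\}$ gives $\overline{ad\ast}-\overline{bc\ast}=\frac{n-4}{2}(\overline{abd}+\overline{acd}-\overline{abc}-\overline{bcd})=0$, and applying it to $\{b,d\},\{a,c\}$ gives $\overline{bd\ast}-\overline{ac\ast}=0$. Since $\overline{bc\ast}=2$ and $\overline{ac\ast}=n-2$ are known, this forces $\overline{ad\ast}=2$ and $\overline{bd\ast}=n-2$. As every $\overline{adi},\overline{bdi}\in\{0,1\}$ and $\overline{abd}=\overline{acd}=\overline{bcd}=1$, the value $\overline{ad\ast}=2$ forces $\overline{adi}=0$ and $\overline{bd\ast}=n-2$ forces $\overline{bdi}=1$ for all $i\neq a,b,c,d$, while the $c$-column $(\overline{abc},\overline{bcd},\overline{acd})=(1,1,1)$ is immediate; this is exactly the claimed array.

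For Part~2 I would sandwich the three indicators $\overline{aij},\overline{bij},\overline{cij}$ (with $i,j\notin\{a,b,c,d\}$, so the generic row values apply) by using \eqref{eq:eq1} against the clique $ij\ast$. Pairing $\{a,b\},\{i,j\}$ against $\{a,c\},\{i,j\}$ eliminates $\overline{ij\ast}$ and, using $\overline{abi}=\overline{abj}=\overline{aci}=\overline{acj}=1$ together with $\overline{ab\ast}=\overline{ac\ast}$, yields $\overline{bij}=\overline{cij}$; combining the $\{b,c\},\{i,j\}$ relation (where $\overline{bci}=\overline{bcj}=0$, $\overline{bc\ast}=2$) with the $\{a,b\},\{i,j\}$ relation yields $\overline{aij}=\overline{cij}$, and hence $\overline{aij}=\overline{bij}=\overline{cij}$.

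For Part~3 I would first record that $\overline{cde}=1$: the identity \eqref{eq:eq1} on $\{c,d\},\{a,b\}$ gives $\overline{cd\ast}=\overline{ab\ast}=n-2$, so every $\overline{cdi}=1$. The two arrays are then bookkeeping. For $\Gamma(abe)$ the $c$- and $d$-columns are $(\overline{abc},\overline{ace},\overline{bce})=(1,1,0)$ and $(\overline{abd},\overline{ade},\overline{bde})=(1,0,1)$ by Part~1, the $ab$-row is constantly $1$, and Part~2 applied to the outside pair $\{e,i\}$ gives $\overline{aei}=\overline{bei}$, so the remaining columns split into $I_{\omega}=\{i:\overline{aei}=1\}$ and its complement $I_{\beta}$. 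The array for $\Gamma(ace)$ is obtained the same way after checking its $b$- and $d$-columns, namely $(\overline{abc},\overline{abe},\overline{bce})=(1,1,0)$ and $(\overline{acd},\overline{ade},\overline{cde})=(1,0,1)$ (the last entry using $\overline{cde}=1$), and noting that Part~2 gives $\overline{aei}=\overline{cei}$, hence the \emph{same} sets $I_{\omega},I_{\beta}$. I expect the only delicate point to be keeping the index restrictions $i,j,e\notin\{a,b,c,d\}$ straight so that the correct (generic versus special) row values are used; once that is controlled, each step is a one-line evaluation of \eqref{eq:eq1}.
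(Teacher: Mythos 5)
Your proof is correct and follows essentially the same route as the paper: everything reduces to the clique-sum identities of Lemmas \ref{lem:lemeq1} and \ref{lem:eq2} together with the known values $\overline{ab\ast}=\overline{ac\ast}=n-2$, $\overline{bc\ast}=2$. The only cosmetic difference is that for Part~1 you rederive $\overline{ad\ast}=2$ and $\overline{bd\ast}=n-2$ directly from Equation \eqref{eq:eq1}, whereas the paper cites $\overline{ac\ast}-\overline{ad\ast}=n-4$ from Lemma \ref{lem:cIIIcIVii} and then uses the count $p_{11}=2n-5$; both are one-line consequences of the same linear relations.
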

\begin{proof}
	1. In Lemma \ref{lem:cIIIcIVii}, we prove that $\overline{ac\ast}-\overline{ad\ast}=n-4$
	and $\overline{ac\ast}=n-2$, so $\overline{ad\ast}=2$. As $p_{11}=2n-5$ and $\overline{ab\ast}=n-2$, we deduce that $\overline{bd\ast}=n-2$. We have completely determined the nb-array of $abd$. 
	
	2. Let $i,j\in \left[n\right]\setminus \{ a,b,c,d \}$. Applying Equation \eqref{eq:rowdiff} to the rows $ab\ast,bc\ast$, we have
	$$
	2=\overline{abi}+\overline{abj}+\overline{cij}-\overline{bci}-\overline{bcj}-\overline{aij}=2+\overline{cij}-\overline{aij}.
	$$ 
	Therefore, we have $\overline{cij}=\overline{aij}$.
	Similarly, applying Equation \eqref{eq:rowdiff} to the rows $ab\ast,ac\ast$, we deduce $\overline{cij}=\overline{bij}$.
	
	3. This follows from 1. and 2., and our previous knowledge of the nb-arrays of $abc,abe$.
	
\end{proof}

We show that $|I_{\omega}|<3$, which means that $n\leq 8$.

\begin{lemma}[Case \labelcref{case:III} and \labelcref{case:IV}(\labelcref{case:IVii})]
	Let $abc$ be a vertex of $\Gamma$ with $\overline{abc}=1$, and let $d\in \left[n\right]\setminus \{a,b,c\}$ be such that we have nb-array
	$$
	\Gamma(abc):
	\underbrace{
		\begin{array}{c}
		1\\
		1\\
		1
		\end{array}
	}_{d}
	\begin{array}{ccccccccccc}
	1 & 1 & \dots & \dots & \dots & \dots & \dots & 1 & 1 & 1 & 1\\
	1 & 1 & \dots & \dots & \dots & \dots & \dots & 1 & 1 & 1 & 1\\
	0 & 0 & \dots & \dots & \dots & \dots & \dots & 0 & 0 & 0 & 0
	\end{array}
	\begin{array}{c}  
	~\leftarrow ab\text{-row} \\
	~\leftarrow ac\text{-row} \\
	~\leftarrow bc\text{-row} 
	\end{array}
	$$
	
	Then $n\leq 8$.
\end{lemma}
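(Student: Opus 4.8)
The plan is to encode the restriction of $X$ to the triples avoiding $\{a,b,c,d\}$ as a graph and then force a contradiction through a Ramsey-type argument. Set $S:=\left[n\right]\setminus\{a,b,c,d\}$. In this case $p_{11}=2n-5$, so $p_{12}=n-4$, $p_{21}=n+2$ and $\theta+3=n-4$. By Lemma \ref{lem:cIIIij}(2) the quantity $\phi(i,j):=\overline{aij}=\overline{bij}=\overline{cij}$ is well defined for all distinct $i,j\in S$, and I let $H$ be the graph on vertex set $S$ in which $i\sim j$ whenever $\phi(i,j)=1$. Fixing $e\in S$, the set $I_{\omega}$ of Lemma \ref{lem:cIIIij}(3) is exactly the $H$-neighbourhood of $e$; counting the ones in the nb-array of $abe$ against $\overline{\Gamma(abe)}=p_{11}$ gives $|I_{\omega}|=(n-4)/2$, so $H$ is regular of this degree (and in particular $n$ is even).

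First I would pin down the full row sums $r_{ij}:=\overline{ij\ast}$ for $i,j\in S$. Applying Lemma \ref{lem:cIIIij}(3) with $e=i$ and the same count yields $r_{ai}=\overline{ai\ast}=n/2$ for every $i\in S$. Then applying Equation \eqref{eq:eq2} to the five elements $i,j,a,b,c$, and using $\overline{abc}=1$, $\overline{abi}=\overline{aci}=1$, $\overline{bcj}=0$ and $\overline{bij}=\overline{cij}=\phi(i,j)$, gives
$$r_{ij}=\frac{n}{2}+\frac{n-4}{2}\bigl(2\phi(i,j)-1\bigr),$$
so that $r_{ij}=n-2$ when $\{i,j\}$ is an edge of $H$ and $r_{ij}=2$ otherwise.

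Next I would read off the membership of every triple inside $S$. For distinct $i,j,k\in S$, Lemma \ref{lem:localIndicator} gives $r_{ij}+r_{ik}+r_{jk}=(n-4)\overline{ijk}+p_{21}$. Writing $\epsilon$ for the number of edges of $H$ among $\{i,j\},\{i,k\},\{j,k\}$, the left-hand side equals $\epsilon(n-4)+6$, and the identity collapses (for $n\neq 4$) to $\overline{ijk}=\epsilon-1$. Since $\overline{ijk}\in\{0,1\}$, we must have $\epsilon\in\{1,2\}$ for every triple of $S$: no three vertices of $S$ span $0$ or $3$ edges of $H$. Hence $H$ is triangle-free and has no independent set of size $3$. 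In particular the neighbours of $e$ are pairwise non-adjacent in $H$ (an edge between two of them would complete a triangle with $e$), so any three of them would form an independent $3$-set, which is excluded; therefore $|I_{\omega}|=\deg_{H}(e)<3$. As $|I_{\omega}|=(n-4)/2$, this forces $n\leq 8$. Equivalently, applying $R(3,3)=6$ to the triangle-free, independence-number-$\leq 2$ graph $H$ gives $|S|=n-4\leq 5$.

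The only delicate part is the second paragraph: carefully evaluating the indicators fed into Equation \eqref{eq:eq2} and confirming $\overline{ai\ast}=n/2$, so as to establish the dichotomy $r_{ij}\in\{2,n-2\}$. Once that dichotomy is in hand the combinatorial endgame is immediate, so I expect the row-sum computation to be the main obstacle, with everything after it being a short Ramsey-style deduction.
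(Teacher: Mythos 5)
Your proof is correct, and it takes a genuinely different route from the paper's. The paper argues purely locally: assuming $|I_{\omega}|\geq 3$, it pins down the nb-arrays of $bde$ and then $bei$ for $i\in I_{\omega}$, computes $\overline{bij}=\overline{bik}=0$, and finally shows the nb-array of $bei$ would have to fall into Case \labelcref{case:II}, contradicting the standing assumption that Case \labelcref{case:II} is excluded for $n>8$; hence $|I_{\omega}|\leq 2$ and $n\leq 8$. You instead globalise: using Lemma \ref{lem:cIIIij}(2) to define the graph $H$ on $S=\left[n\right]\setminus\{a,b,c,d\}$, you compute every clique sum $\overline{ij\ast}\in\{2,n-2\}$ via Equation \eqref{eq:eq2}, and then Lemma \ref{lem:localIndicator} applied to triples inside $S$ yields $\overline{ijk}=\epsilon-1$, so $H$ is triangle-free with independence number at most $2$; since $H$ is $(n-4)/2$-regular and a vertex's neighbourhood is independent, $(n-4)/2\leq 2$. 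I checked the key computations — $\overline{ai\ast}=n/2$, the dichotomy $r_{ij}=\frac{n}{2}+\frac{n-4}{2}(2\phi(i,j)-1)$, and the identity $\epsilon(n-4)+6=(n-4)\overline{ijk}+(n+2)$ — and they are all right. Your argument buys two things: it does not invoke the prior exclusion of Case \labelcref{case:II} (the contradiction is self-contained), and it determines the entire restriction of the partition to triples in $S$ in terms of $H$, which is more information than the bound requires. The paper's version is shorter to state given the nb-array machinery already set up, but your Ramsey-style endgame is cleaner and arguably more illuminating. One cosmetic remark: the pure $R(3,3)=6$ count gives only $n-4\leq 5$, i.e.\ $n\leq 9$; it is the regularity/degree argument (or the parity of $(n-4)/2$) that sharpens this to $n\leq 8$, so the degree formulation should be the one you keep.
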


\begin{proof}
	We take $d,e,I_{\omega},I_{\beta}$ as in Lemma \ref{lem:cIIIij}.	Suppose $|I_{\omega}|\geq 3$, and let $i,j,k\in I_{\omega}$ be distinct.
	
	First we find the nb-array of $bde$. By assumption, we know $\overline{abc},\overline{abe},\overline{bcd},\overline{bce}$, and by Lemma \ref{lem:cIIIij} 1., we know $\overline{ace},\overline{ade}$. 
	By Lemma \ref{lem:cIIIij} 1., we also know the $bd$-row, and by Lemma \ref{lem:cIIIij} 3., we have determined the $be$-row. We note that the nb-array of $bde$ must then be in Case (IV)(ii), and we have the nb-array
	\begin{equation}\label{eq:IIIiVbde}
	\Gamma(bde):
	\underbrace{
		\begin{array}{c}
		1\\
		1\\
		0
		\end{array}
	}_{a}
	\underbrace{
		\begin{array}{c}
		1\\
		0\\
		1
		\end{array}
	}_{c}
	\underbrace{
		\begin{array}{cccc}
		1 & \dots & \dots & 1\\
		1 & \dots & \dots & 1\\
		1 & \dots & \dots & 1
		\end{array}
	}_{I_{\omega}}
	\underbrace{
		\begin{array}{cccc}
		1 & \dots & \dots & 1\\
		0 & \dots & \dots & 0\\
		0 & \dots & \dots & 0
		\end{array}
	}_{I_{\beta}}
	\begin{array}{c}  
	~\leftarrow bd\text{-row} \\
	~\leftarrow be\text{-row} \\
	~\leftarrow de\text{-row} 
	\end{array}
	\end{equation}
	
	Now consider $bei$. By nb-array \eqref{eq:IIIiVbde}, we know $be\ast$. Applying Equation \eqref{eq:rowdiff},
	\begin{align*}
	\overline{ei\ast}-\overline{bi\ast}&=\frac{n-4}{2}(\overline{aei}+\overline{cei}+\overline{abc}-\overline{abi}-\overline{bci}-\overline{ace})\\
	&=\frac{n-4}{2}(1+1+1-1-0-1)\\
	&=\frac{n-4}{2}
	\end{align*}
	(here we use $\overline{bei}=\overline{cei}$ by Lemma \ref{lem:cIIIij} 3.).
	Therefore $\overline{ei\ast}=n-2$ and we have the nb-array
	\begin{equation}\label{eq:IIIiVbei}
	\Gamma(bei):
	\underbrace{
		\begin{array}{c}
		1\\
		1\\
		1
		\end{array}
	}_{a}
	\underbrace{
		\begin{array}{c}
		1\\
		0\\
		0
		\end{array}
	}_{c}
	\underbrace{
		\begin{array}{c}
		1\\
		1\\
		1
		\end{array}
	}_{d}
	\underbrace{
		\begin{array}{cccc}
		1 & \dots & \dots & 1\\
		\ast & \dots & \dots & \ast\\
		1 & \dots & \dots & 1
		\end{array}
	}_{I_{\omega}\setminus \{i\}}
	\underbrace{
		\begin{array}{cccc}
		1 & \dots & \dots & 1\\
		\ast & \dots & \dots & \ast\\
		0 & \dots & \dots & 0
		\end{array}
	}_{I_{\beta}}
	\begin{array}{c}  
	~\leftarrow ei\text{-row} \\
	~\leftarrow bi\text{-row} \\
	~\leftarrow be\text{-row} 
	\end{array}
	\end{equation}	
	
	Now we find the values of $\overline{bij},\overline{bik}$. By nb-array \eqref{eq:IIIiVbei}, we have 
	$$
	\overline{be\ast}-\overline{bi\ast}=\frac{n-4}{2}(\overline{abe}+\overline{bce}+\overline{aci}-\overline{abi}-\overline{bci}-\overline{ace})=0
	$$
	Therefore 
	\begin{align*}
	\frac{n-4}{2}&=\overline{ei\ast}-\overline{bi\ast}\\
	&=\frac{n-4}{2}(\overline{aei}+\overline{eij}+\overline{abj}-\overline{abi}-\overline{bij}-\overline{aej})\\
	&=\frac{n-4}{2}(1-\overline{bij}).
	\end{align*}
	Therefore $\overline{bij}=0$ and $\overline{bik}=0$, and we have nb-array
	$$
	\Gamma(bei):
	\underbrace{
		\begin{array}{c}
		1\\
		1\\
		1
		\end{array}
	}_{a}
	\underbrace{
		\begin{array}{c}
		1\\
		0\\
		0
		\end{array}
	}_{c}
	\underbrace{
		\begin{array}{c}
		1\\
		1\\
		1
		\end{array}
	}_{d}
	\underbrace{
		\begin{array}{c}
		1\\
		0\\
		1
		\end{array}
	}_{j}
	\underbrace{
		\begin{array}{c}
		1\\
		0\\
		1
		\end{array}
	}_{k}
	\underbrace{
		\begin{array}{cccc}
		1 & \dots & \dots & 1\\
		0 & \dots & \dots & 0\\
		1 & \dots & \dots & 1
		\end{array}
	}_{I_{\omega}\setminus \{i,j,k\}}
	\underbrace{
		\begin{array}{cccc}
		1 & \dots & \dots & 1\\
		\ast & \dots & \dots & \ast\\
		0 & \dots & \dots & 0
		\end{array}
	}_{I_{\beta}}
	\begin{array}{c}  
	~\leftarrow ei\text{-row} \\
	~\leftarrow bi\text{-row} \\
	~\leftarrow be\text{-row} 
	\end{array}
	$$	
	Applying equation \eqref{eq:rowdiff} to the rows $bi\ast,be\ast$, we have 
	$$
	\overline{be\ast}- \overline{bi\ast}=\frac{n-4}{2}(2+\overline{ijk}-\overline{ejk}),
	$$
	which is non-zero. This implies that the nb-array $bei$ is in case \labelcref{case:II}, a contradiction. Therefore $|I_{\omega}|\leq 2$. By looking at the nb-array of $abe$ we see that $|I_{\omega}|=(n-4)/2$, and so $n\leq 8$. 
\end{proof}

\subsection{Removing case (IV)(i)}

\begin{lemma}[Case \labelcref{case:IV}(\labelcref{case:IVi})]
	Let $abc$ be a vertex of $\Gamma$ with $\overline{abc}=1$, and let $d\in \left[n\right]\setminus \{a,b,c\}$ be such that we have nb-array
	$$
	\Gamma(abc):
	\underbrace{\begin{array}{ccccc}
		1 & \dots & \dots & 1 & 1\\
		1 & \dots & \dots & 1 & 1\\
		1 & \dots & \dots & 1 & 1
		\end{array}}_{I_{\omega}}
	\underbrace{\begin{array}{ccccc}
		1 & 1 & \dots & \dots & 1\\
		0 & 0 & \dots & \dots & 0\\
		0 & 0 & \dots & \dots & 0
		\end{array}}_{I_{\beta}}
	\begin{array}{c}  
	~\leftarrow ab\text{-row} \\
	~\leftarrow ac\text{-row} \\
	~\leftarrow bc\text{-row} 
	\end{array}
	$$
	Then $n\leq 6$
\end{lemma}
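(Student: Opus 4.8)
The plan is to exploit the fact that in Case~\labelcref{case:IV}(\labelcref{case:IVi}) the $ab$-row lies entirely in $X_1$, and then to bootstrap with Equation~\eqref{eq:rowdiff} until a single vertex is forced into an impossible configuration. First I would read off from the difference tuple that $\overline{ab\ast}=n-2$ and $\overline{ac\ast}=\overline{bc\ast}=n/2$, so that $|I_\omega|=(n-2)/2$ and $|I_\beta|=(n-4)/2$; in particular $n$ is even, and for $n>6$ we have $|I_\omega|\ge 2$ and $|I_\beta|\ge 2$. Writing $I_\omega,I_\beta$ for the index sets of the all-ones and the $(1,0,0)$ columns, I would apply Equation~\eqref{eq:rowdiff} at the vertex $abc$ (and its analogues for the row-pairs $(ab\ast,ac\ast)$ and $(ac\ast,bc\ast)$), feeding in witness pairs $d,e$ taken (a)~both from $I_\omega$, (b)~both from $I_\beta$, and (c)~one from each. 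Each choice collapses the six-term right-hand side to a single unknown and yields $\overline{aii'}=\overline{bii'}=0,\ \overline{cii'}=1$ for distinct $i,i'\in I_\omega$; $\overline{ajj'}=\overline{bjj'}=1,\ \overline{cjj'}=0$ for distinct $j,j'\in I_\beta$; and $\overline{aij}=\overline{bij}=\overline{cij}=:\alpha_{ij}$ for $i\in I_\omega,\,j\in I_\beta$.

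The only family left undetermined is the ``mixed'' family $\alpha_{ij}$, and pinning these values down is the crux of the argument. To do this I would examine the vertex $abi$ for a fixed $i\in I_\omega$ (which lies in $X_1$). Using the values just obtained, a direct count gives $\overline{ab\ast}=n-2$ and $\overline{ai\ast}=\overline{bi\ast}=2+S_i$, where $S_i:=\sum_{j\in I_\beta}\alpha_{ij}$. Since $\overline{ab\ast}-\overline{ai\ast}=n-4-S_i$ must be an integer multiple of $(n-4)/2$ while $0\le S_i\le|I_\beta|=(n-4)/2$, the only possibilities are $S_i=0$ and $S_i=(n-4)/2$. If $S_i=0$, then $abi$ has difference tuple $(n-4,0)$, i.e.\ it falls into Case~\labelcref{case:I}, contradicting the standing assumption that Cases~\labelcref{case:I} and~\labelcref{case:II} never occur. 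Hence $S_i=(n-4)/2$, which forces $\alpha_{ij}=1$ for every $j\in I_\beta$; as $i$ was arbitrary, all mixed triples lie in $X_1$.

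With $\alpha_{ij}\equiv 1$, the final contradiction comes from one well-chosen vertex. Taking $i\in I_\omega$, $j\in I_\beta$ (so $aij\in X_1$), I would compute its two row-sums through $a$: a direct count gives $\overline{ai\ast}=2+|I_\beta|=n/2$ and $\overline{aj\ast}=|I_\omega|+|I_\beta|=n-3$, where $n>6$ (equivalently $|I_\beta|\ge 2$) is used so that the sum over $I_\beta\setminus\{j\}$ is nonempty. Then $\overline{ai\ast}-\overline{aj\ast}=(6-n)/2$, whose absolute value $(n-6)/2$ satisfies $0<(n-6)/2<(n-4)/2$ for $n>6$, so it cannot be an integer multiple of $(n-4)/2$ --- contradicting Equation~\eqref{eq:rowdiff} applied at $aij$. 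Therefore $n\le 6$. I expect the main obstacle to be the middle step: the row-sums of $aij$ cannot be evaluated until the mixed triples $\alpha_{ij}$ are fixed, and the clean way to fix them is to recognise $abi$ as a vertex whose own local structure is already forbidden (Case~\labelcref{case:I}) unless every $\alpha_{ij}$ equals $1$.
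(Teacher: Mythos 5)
Your proof is correct, and while it opens the same way as the paper's (using Equation \eqref{eq:rowdiff} with both witnesses in $I_\omega$ to get $\overline{aii'}=\overline{bii'}=0$, $\overline{cii'}=1$), it reaches the contradiction by a genuinely different route. The paper pins down the mixed triples by looking at the vertex $acd$ for $d\in I_\omega$: the count $p_{11}=2n-5$ forces $\overline{ade}=\overline{cde}=1$ for all $e\in I_\beta$, whereupon $acd$ is itself a Case \labelcref{case:IV} vertex whose nb-array contains $|I_\beta|$ columns of the shape $(1,1,0)$ (big row first) — and the already-established classification of Case \labelcref{case:IV} nb-arrays permits at most one such column, giving $|I_\beta|\le 1$ and $n\le 6$ in one stroke. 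You instead compute the two further families ($\overline{ajj'}=\overline{bjj'}=1$, $\overline{cjj'}=0$ for $j,j'\in I_\beta$, and $\overline{aij}=\overline{bij}=\overline{cij}$ for the mixed pairs), fix the mixed values via the vertex $abi$ and the exclusion of Case \labelcref{case:I} (or, equivalently, the Case \labelcref{case:I} lemma itself, which already yields $n\le 6$ in the $S_i=0$ branch), and then get the contradiction from the quantization $\overline{ai\ast}-\overline{aj\ast}\in\{0,\pm(n-4)/2,\pm(n-4)\}$ at the vertex $aij$, since $(6-n)/2$ is not such a multiple for $n>6$. Your argument is a little longer and needs the extra triple computations, but it is more self-contained: it never invokes the detailed column classification of the Case \labelcref{case:IV} nb-arrays, only the row-difference quantization and the Case \labelcref{case:I} bound. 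All the individual computations ($\overline{ai\ast}=n/2$, $\overline{aj\ast}=n-3$, the dichotomy $S_i\in\{0,(n-4)/2\}$) check out.
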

\begin{proof}
	Note that $|I_{\omega}|=(n-2)/2,|I_{\beta}|=(n-4)/2$ and $p_{11}=2n-5$.
	
	Let $d,e\in I_{\omega}$. Using Equation \eqref{eq:rowdiff}, we see that
	
	$$\frac{n-4}{2}=\overline{ab\ast}-\overline{ac\ast}=\frac{n-4}{2}(\overline{cde}-\overline{bde})$$
	
	Therefore, $\overline{bde}=0,\overline{cde}=1$. Similarly, we can show that $\overline{ade}=0$. 
	
	Let $d\in I_{\omega}$ and consider $acd$. We have $\overline{acd}=1$, and the nb-array
	
	$$
	\Gamma(acd):
	\underbrace{\begin{array}{c}
		1 \\
		1\\
		1
		\end{array}
	}_{b}
	\underbrace{\begin{array}{ccccc}
		1 & \dots & \dots & 1 & 1\\
		0 & \dots & \dots & 0 & 0\\
		1 & \dots & \dots & 1 & 1
		\end{array}}_{I_{\omega}\setminus \{d\}}
	\underbrace{\begin{array}{ccccc}
		0 & 0 & \dots & \dots & 0\\
		* & * & \dots & \dots & *\\
		* & * & \dots & \dots & *
		\end{array}}_{I_{\beta}}
	\begin{array}{c}  
	~\leftarrow ac\text{-row} \\
	~\leftarrow ad\text{-row} \\
	~\leftarrow cd\text{-row} 
	\end{array}
	$$
	
	As $p_{11}=2n-5$, we must have $\overline{ade}=\overline{cde}=1$ for all $e\in I_{\beta}$. Therefore, $acd$ is in case \labelcref{case:IV}. Looking at the possible arrays for case \labelcref{case:IV}, we see that the nb-array for $acd$ can only be valid when $|I_{\beta}|\leq 1$, so $n\leq 6$.
\end{proof}

\subsection{Removing case \labelcref{case:VI}(\labelcref{case:VIii})}\label{sec:rmVIii}

\begin{lemma}[Case \labelcref{case:VI}(\labelcref{case:VIii})]
	Let $abc$ be a vertex of $\Gamma$ with $\overline{abc}=1$, and let $d,e\in \left[n\right]\setminus \{a,b,c\}$ be such that we have nb-array
	$$
	\Gamma(abc):
	\underbrace{\begin{array}{c}
		1\\
		1\\
		0
		\end{array}}_{d}
	\underbrace{\begin{array}{c}
		0\\
		0\\
		1
		\end{array}}_{e}
	\underbrace{\begin{array}{ccccc}
		1 & 1 & \dots & \dots & 1\\
		1 & 1 & \dots & \dots & 1\\
		1 & 1 & \dots & \dots & 1
		\end{array}}_{I_{\omega}}
	\underbrace{\begin{array}{cccc}
		0 & \dots & \dots & 0\\
		0 & \dots & \dots & 0\\
		0 & \dots & \dots & 0
		\end{array}}_{I_{\beta}}
	\begin{array}{c}  
	~\leftarrow ab\text{-row} \\
	~\leftarrow ac\text{-row} \\
	~\leftarrow bc\text{-row} 
	\end{array}
	$$
	Then $n\leq 14$
\end{lemma}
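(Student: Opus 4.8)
The plan is to use Equation \eqref{eq:rowdiff} at the given vertex $abc$ to pin down the indicator value of every triple that meets $\{a,b,c\}$ in exactly two points, then to read off the nb-array of the neighbour $abd\in X_1$. Its three row sums will turn out to be so rigidly determined that a single count against the standing bound $p_{11}\ge 2n-7$ forces $n$ to be small; in fact this route yields $n\le 8$, which of course implies the claimed $n\le 14$.

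First I would record the parameters coming from the nb-array of $abc$. All three rows of $\Gamma(abc)$ have weight $2+|I_\omega|$, so by Lemma \ref{lem:localIndicator}, $p_{11}=\overline{\Gamma(abc)}=3(2+|I_\omega|)-3=3+3|I_\omega|$, while the columns give $|I_\omega|+|I_\beta|=n-5$. In particular $p_{11}\ge 2n-7$ already forces $|I_\omega|\ge (2n-10)/3$, so for large $n$ the set $I_\omega$ is nonempty and there is room to apply the five-element identity.

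Next I would apply Equation \eqref{eq:rowdiff} at $abc$ for all three ordered row-pairs, using that each such difference is $0$. With auxiliary pair $\{d,e\}$ one gets $\overline{ade}=\overline{bde}=\overline{cde}=:s\in\{0,1\}$. With an auxiliary pair $\{p,d\}$, $p\in I_\omega\cup I_\beta$, the choice $x=b,y=a,z=c$ produces a $0=\tfrac{n-4}{2}(1+\overline{cpd}-\overline{apd})$ equation, whose only $\{0,1\}$-solution is $\overline{apd}=1$, $\overline{cpd}=0$; the choice $x=a,y=b,z=c$ then gives $\overline{bpd}=\overline{cpd}=0$. Symmetrically, the pair $\{p,e\}$ forces $\overline{ape}=0$ and $\overline{bpe}=\overline{cpe}=1$. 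These values determine every triple containing exactly two of $a,b,c$.

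I would then assemble the nb-array of $abd$, which lies in $X_1$. In rows $ab,ad,bd$ its columns are $c\mapsto(1,1,0)$, $e\mapsto(0,s,s)$, each $p\in I_\omega\mapsto(1,1,0)$, and each $p\in I_\beta\mapsto(0,1,0)$. Hence $\overline{ab\ast}=2+|I_\omega|$, $\overline{ad\ast}=n-3+s$, $\overline{bd\ast}=1+s$, so $\overline{\Gamma(abd)}=n-3+|I_\omega|+2s$. Since $abd\in X_1$ this equals $p_{11}=3+3|I_\omega|$, giving $2|I_\omega|=n-6+2s$, hence $|I_\omega|=(n-6)/2+s\le (n-4)/2$. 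Substituting into $p_{11}=3+3|I_\omega|\ge 2n-7$ yields $(3n-6)/2\ge 2n-7$, i.e. $n\le 8$. (Equivalently, the three row sums of $abd$ differ by exactly $(n-4)/2$ each, so $abd$ is in Case \labelcref{case:II}, already shown to force $n\le 8$.) The main obstacle is purely the bookkeeping of the third step: one must apply Equation \eqref{eq:rowdiff} with the correct assignment of the five distinct elements for each forced value and check, uniformly over the pair types and over $p\in I_\omega$ versus $p\in I_\beta$, that each resulting linear equation has a unique $\{0,1\}$-solution; the only genuine surviving unknown is the common value $s$, and the final inequality holds for both $s=0$ and $s=1$.
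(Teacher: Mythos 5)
Your proof is correct, and it in fact establishes the stronger bound $n\le 8$ rather than the stated $n\le 14$. Both arguments run through the same two ingredients --- Equation \eqref{eq:rowdiff} applied at $abc$ and a contradiction extracted at the neighbour $abd\in X_1$ --- but they diverge in how the contradiction is obtained. The paper only computes $\overline{ab\ast}-\overline{ad\ast}=-(n-4)/2$ and plays the crude lower bound $\overline{ab\ast}=t_{VI}+1>n/2$ (valid once $n>14$) against the trivial ceiling $\overline{ad\ast}\le n-2$; it never determines the values $\overline{apd}$ for $p\in I_\omega\cup I_\beta$. You instead pin those values down exactly ($\overline{apd}=1$, $\overline{bpd}=\overline{cpd}=0$, correctly derived from the two orderings of \eqref{eq:rowdiff} with auxiliary pair $\{p,d\}$), so that $\overline{ad\ast}=n-3+s$ and $\overline{bd\ast}=1+s$ are known outright; equating $\overline{\Gamma(abd)}=n-3+|I_\omega|+2s$ with $p_{11}=3+3|I_\omega|$ gives $|I_\omega|=(n-6)/2+s\le(n-4)/2$, whence $p_{11}\le(3n-6)/2$ and $n\le 8$. (Your parenthetical observation that $abd$ then has difference tuple $((n-4)/2,(n-4)/2)$, i.e.\ lands in Case \labelcref{case:II}, is a clean alternative way to close.) The paper's version buys brevity; yours buys a sharper threshold, which matters here because the bound $14$ for cases \labelcref{case:VI}(\labelcref{case:VIi}) and \labelcref{case:VI}(\labelcref{case:VIii}) in Table \ref{tab:posnba} is precisely what forces the final classification to be stated for $n>14$, so your refinement removes one of the two obstructions to lowering that threshold.
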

\begin{proof}
	Suppose $n>14$, and let $t_{VI}=\overline{ab\ast}-1$. Then $p_{11}=\overline{\Gamma(abc)}=3t_{VI}$ and so $t_{VI}\geq (2n-7)/3$. We  then observe that when $n> 14$, $t_{VI}\geq (2n-7)/3> n/2$.
	
	Using Equation \eqref{eq:rowdiff}, we see that
	
	$$0=\overline{ab\ast}-\overline{bc\ast}=\frac{n-4}{2}(\overline{cde}-\overline{ade})$$
	
	Therefore, $\overline{ade}=\overline{cde}$. Similarly, we can show that $\overline{bde}=\overline{ade}=\overline{cde}$.
	
	Then we consider $abd$. We have $\overline{abd}=1$ and 
	
	\begin{eqnarray*}
		\overline{ab\ast}-\overline{ad\ast}&=&\frac{n-4}{2}(\overline{abc}+\overline{abe}+\overline{cde}-\overline{acd}-\overline{ade}-\overline{bce})\\
		&=&\frac{n-4}{2}(\overline{cde}-\overline{ade}-1)\\
		&=&-(n-4)/2
	\end{eqnarray*}
	
	Therefore, $\overline{ad\ast}>n/2+(n-4)/2=n-2$ when $n>14$. 
	
\end{proof}

\subsection{Removing case \labelcref{case:VI}(\labelcref{case:VIiv})}
\begin{lemma}[Case \labelcref{case:VI}(\labelcref{case:VIiv})]
	Let $abc$ be a vertex of $\Gamma$ with $\overline{abc}=1$, and let $d,e,f\in \left[n\right]\setminus \{a,b,c\}$ be such that we have nb-array
	$$
	\Gamma(abc):
	\underbrace{\begin{array}{c}
		1\\
		1\\
		0
		\end{array}}_{d}
	\underbrace{\begin{array}{c}
		1\\
		0\\
		1
		\end{array}}_{e}
	\underbrace{\begin{array}{c}
		0\\
		1\\
		1
		\end{array}}_{f}
	\underbrace{\begin{array}{cccc}
		1 & \dots & \dots & 1\\
		1 & \dots & \dots & 1\\
		1 & \dots & \dots & 1
		\end{array}}_{I_{\omega}}
	\underbrace{\begin{array}{cccc}
		0 & \dots & \dots & 0\\
		0 & \dots & \dots & 0\\
		0 & \dots & \dots & 0
		\end{array}}_{I_{\beta}}
	\begin{array}{c}  
	~\leftarrow ab\text{-row} \\
	~\leftarrow ac\text{-row} \\
	~\leftarrow bc\text{-row} 
	\end{array}
	$$
	Then $n\leq 8$
\end{lemma}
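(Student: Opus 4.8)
The plan is to show, exactly as in the treatment of Case \labelcref{case:VI}(\labelcref{case:VIii}), that this nb-array forces the clique-sum $\overline{ad\ast}$ to exceed its trivial maximum $n-2$ once $n>8$. Write $I_\omega,I_\beta$ for the all-ones and all-zeros blocks, so that $d,e,f$ are the three mixed columns with
\[
(\overline{abd},\overline{acd},\overline{bcd})=(1,1,0),\qquad
(\overline{abe},\overline{ace},\overline{bce})=(1,0,1),\qquad
(\overline{abf},\overline{acf},\overline{bcf})=(0,1,1).
\]
Reading off the array gives $\overline{ab\ast}=\overline{ac\ast}=\overline{bc\ast}=|I_\omega|+3$ and $p_{11}=\overline{\Gamma(abc)}=3|I_\omega|+6$. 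Since we are assuming $p_{11}\geq 2n-7$ throughout this part of the paper, this yields the lower bound $|I_\omega|\geq (2n-13)/3$. The aim is to produce a matching upper bound on $|I_\omega|$ strong enough to collapse $n$.

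First I would pin down two off-array triple-indicators. Applying Equation \eqref{eq:rowdiff} to $abc$ with auxiliary pair $d,e$: the three row-sums of $abc$ are equal, so the right-hand side vanishes, and computing it once for $\overline{ab\ast}-\overline{ac\ast}=0$ and once for $\overline{ab\ast}-\overline{bc\ast}=0$ forces $\overline{cde}=0$ and $\overline{ade}=\overline{bde}=1$. These are the only values not already read from the array that I will need.

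Next I would examine the vertex $abd$, which lies in $X_1$ because $\overline{abd}=1$. Treating Equation \eqref{eq:eq2} as an identity among the five elements $a,b,d,c,e$, I compute
\[
\overline{ab\ast}-\overline{ad\ast}
=\frac{n-4}{2}\bigl(\overline{abc}+\overline{abe}+\overline{cde}-\overline{acd}-\overline{ade}-\overline{bce}\bigr)
=\frac{n-4}{2}\,(1+1+0-1-1-1)=-\frac{n-4}{2}.
\]
Hence $\overline{ad\ast}=\overline{ab\ast}+\tfrac{n-4}{2}=|I_\omega|+3+\tfrac{n-4}{2}$. Since $ad\ast$ induces a clique on $n-2$ vertices we always have $\overline{ad\ast}\leq n-2$, which rearranges to $|I_\omega|\leq (n-6)/2$.

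Finally I would combine the two bounds $\tfrac{2n-13}{3}\leq |I_\omega|\leq \tfrac{n-6}{2}$; cross-multiplying gives $4n-26\leq 3n-18$, that is $n\leq 8$, which is the claim. The only delicate part is the bookkeeping of the forced values $\overline{cde},\overline{ade}$ and their correct substitution into Equation \eqref{eq:eq2}; the rest is precisely the overflow mechanism already used for Case \labelcref{case:VI}(\labelcref{case:VIii}), so I expect no conceptual difficulty beyond keeping the row labels of $\Gamma(abd)$ straight.
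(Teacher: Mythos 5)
Your proof is correct, and its skeleton matches the paper's: both first use Equation \eqref{eq:rowdiff} on the given nb-array to pin down $\overline{cde}=0$, $\overline{ade}=\overline{bde}=1$, and both then pivot to the vertex $abd$. Where you diverge is in how the contradiction is extracted. The paper goes on to determine the remaining off-array values (for the pairs $d,f$ and $e,f$ and for $\overline{adg},\overline{bdg},\overline{cdg}$ with $g\in I_{\omega}\cup I_{\beta}$), assembles the full nb-array of $abd$, and observes that its difference tuple is $((n-4)/2,(n-4)/2)$, i.e.\ Case \labelcref{case:II}, so the previously proved bound $n\leq 8$ applies. You instead stop after the single identity $\overline{ad\ast}-\overline{ab\ast}=(n-4)/2$ and close with a counting overflow: $\overline{ab\ast}=|I_{\omega}|+3$ with $p_{11}=3|I_{\omega}|+6\geq 2n-7$ forces $|I_{\omega}|\geq(2n-13)/3$, while $\overline{ad\ast}\leq n-2$ forces $|I_{\omega}|\leq(n-6)/2$, and the two bounds are incompatible for $n>8$. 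Your route is slightly more economical (it needs fewer forced values and does not invoke the Case \labelcref{case:II} lemma, only the standing assumption $p_{11}\geq 2n-7$ and the trivial clique bound), whereas the paper's route yields the complete local picture around $abd$ as a by-product. Both computations check out; in particular your substitution $\overline{ab\ast}-\overline{ad\ast}=\frac{n-4}{2}(1+1+0-1-1-1)=-\frac{n-4}{2}$ is the correct instance of Equation \eqref{eq:eq2} for the five elements $a,b,d,c,e$.
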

\begin{proof}
	Note that we have a lower bound on the size of $I_{\omega}$. 
	
	Using Equation \eqref{eq:rowdiff}, we see that
	
	$$0=\overline{ab\ast}-\overline{ac\ast}=\frac{n-4}{2}(1+\overline{cde}-\overline{bde})$$
	
	Therefore, $\overline{cde}=0,\overline{bde}=1$. Similarly, we can show that $\overline{ade}=1$. We can consider pairs $d,f$ or $e,f$, or use symmetry to see that $\overline{adf}=\overline{cdf}=1,\overline{bdf}=0$ and $\overline{bef}=\overline{cef}=1,\overline{aef}=0$
	
	Let $g\in I_{\omega}\cup I_{\beta}$ Using Equation \eqref{eq:rowdiff}, we see that
	
	$$0=\overline{ab\ast}-\overline{ac\ast}=\frac{n-4}{2}((\overline{abg}-\overline{acg})+(\overline{cdg}-\overline{bdg}))$$
	$$
	0=\overline{ab\ast}-\overline{bc\ast}=\frac{n-4}{2}(1+(\overline{abg}-\overline{bcg})+(\overline{cdg}-\overline{adg})).
	$$
	
	Therefore, $\overline{adg}=1,\overline{bdg}=0=\overline{adg}$.
	
	Now consider $abd$. We know $\overline{abd}=1$, and it has the nb-array

	$$
	\Gamma(abd):
	\underbrace{\begin{array}{c}
		1\\
		1\\
		0
		\end{array}}_{c}
	\underbrace{\begin{array}{c}
		1\\
		1\\
		1
		\end{array}}_{e}
	\underbrace{\begin{array}{c}
		0\\
		1\\
		0
		\end{array}}_{f}
	\underbrace{\begin{array}{cccc}
		1 & \dots & \dots & 1\\
		1 & \dots & \dots & 1\\
		0 & \dots & \dots & 0
		\end{array}}_{I_{\omega}}
	\underbrace{\begin{array}{cccc}
		0 & \dots & \dots & 0\\
		1 & \dots & \dots & 1\\
		0 & \dots & \dots & 0
		\end{array}}_{I_{\beta}}
	\begin{array}{c}  
	~\leftarrow ab\text{-row} \\
	~\leftarrow ad\text{-row} \\
	~\leftarrow bd\text{-row} 
	\end{array}
	$$
	This array is in case \labelcref{case:II}, so $n\leq 8$
\end{proof}

\subsection{Removing case \labelcref{case:VI}(\labelcref{case:VIi})}
\begin{lemma}[Case \labelcref{case:VI}(\labelcref{case:VIi})]
	Let $abc$ be a vertex of $\Gamma$ with $\overline{abc}=1$ be such that we have nb-array
	
	$$
	\Gamma(abc):
	\underbrace{\begin{array}{ccccccc}
		1 & 1 & 1 & 1 & \dots & \dots & 1\\
		1 & 1 & 1 & 1 & \dots & \dots & 1\\
		1 & 1 & 1 & 1 & \dots & \dots & 1
		\end{array}}_{I_{\omega}}
	\underbrace{\begin{array}{cccc}
		0 & \dots & \dots & 0\\
		0 & \dots & \dots & 0\\
		0 & \dots & \dots & 0
		\end{array}}_{I_{\beta}}
	\begin{array}{c}  
	~\leftarrow ab\text{-row} \\
	~\leftarrow ac\text{-row} \\
	~\leftarrow bc\text{-row} 
	\end{array}
	$$
	Then $n\leq 14$.
\end{lemma}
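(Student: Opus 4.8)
The plan is to show that the extremely rigid local pattern of Case VI(i) propagates to neighbouring vertices and pins down $X_1$ globally, and then to recompute $p_{21}$ from that global description and contradict the standing hypothesis $p_{11}\geq 2n-7$. Write $\omega:=|I_\omega|$ and $\beta:=|I_\beta|$, so $\omega+\beta=n-3$. Reading the nb-array of $abc$ directly gives $p_{11}=\overline{\Gamma(abc)}=3\omega$ and $p_{12}=3\beta$, while $\overline{ab\ast}=\overline{ac\ast}=\overline{bc\ast}=\omega+1$ (each clique contains $abc$ together with the $\omega$ triples indexed by $I_\omega$). Since $abc\in X_1$, Lemma \ref{lem:theta} gives $p_{21}=p_{11}-\lambda_2=3\omega-(n-7)$, and $p_{11}\geq 2n-7$ forces $\omega\geq(2n-7)/3$, so $\omega$ is large.

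First I would record a consequence of the difference tuple being $(0,0)$: every column of the nb-array of $abc$ is constant, so $\overline{abd}=\overline{acd}=\overline{bcd}$ for each $d$, and substituting this into Equation \eqref{eq:rowdiff} (and its analogue for the rows $ab\ast,bc\ast$) collapses it to $\overline{ade}=\overline{bde}=\overline{cde}=:F(d,e)$ for all distinct $d,e\in\left[n\right]\setminus\{a,b,c\}$. Now fix $d\in I_\omega$ and consider $abd\in X_1$. Applying Equation \eqref{eq:eq1} to the disjoint pairs $\{a,d\},\{b,c\}$ and $\{b,d\},\{a,c\}$ gives $\overline{ad\ast}=\overline{bd\ast}=\overline{ab\ast}=\omega+1$, so $abd$ also has difference tuple $(0,0)$, i.e. it is in Case VI. By the preceding subsections we may assume Cases VI(ii) and VI(iv) do not occur for $n>14$, so $abd$ is in VI(i) or VI(iii); but every column of the nb-array of $abd$ has equal $ad$- and $bd$-entries (each equal to $F(d,\cdot)$, or $(1,1)$ in the $c$-column), so its only single-$1$ columns are $(1,0,0)$, with the $1$ in the $ab$-row. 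This is incompatible with VI(iii), which needs a single $1$ in each of the three rows, so $abd$ is in VI(i), and its constant columns force $F(d,e)=1$ for all $e\in I_\omega$ and $F(d,g)=0$ for all $g\in I_\beta$.

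Running the same argument on $ade$ for $d,e\in I_\omega$ — it is again in Case VI with columns whose $ad$- and $ae$-entries agree (so its single-$1$ columns all carry the $1$ in the $de$-row, again ruling out VI(iii)), hence in VI(i) — yields $\overline{def}=1$ for $f\in I_\omega$ and $\overline{deg}=0$ for $g\in I_\beta$. Setting $S:=\{a,b,c\}\cup I_\omega$ (so $|S|=\omega+3$), we have now shown $\binom{S}{3}\subseteq X_1$, while every triple meeting $S$ in exactly two points lies in $X_2$. The main obstacle is to classify the triples meeting $S$ in exactly one point, which I would settle with a maximal-clique count. For $g,h\in I_\beta$ and any pair $s_1,s_2\in S$, Equation \eqref{eq:eq1} reduces to $\overline{s_1s_2\ast}-\overline{gh\ast}=-\frac{n-4}{2}(\overline{s_1gh}+\overline{s_2gh})$, because $\overline{s_1s_2g}=\overline{s_1s_2h}=0$. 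As $\overline{s_1s_2\ast}=\omega+1$ is independent of the chosen pair, $\overline{sgh}$ must be the same value $F(g,h)$ for all $s\in S$, whence $\overline{gh\ast}=\omega+1+(n-4)F(g,h)$; since $\overline{gh\ast}\leq n-2$ and $\omega$ is large, this forces $F(g,h)=0$. Thus every triple meeting $S$ in exactly one point also lies in $X_2$.

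Finally I would count the $X_1$-neighbours of a vertex $s_1s_2g\in X_2$ with $s_1,s_2\in S$ and $g\in I_\beta$: its only neighbours in $X_1$ are the $\omega+1$ triples obtained by replacing $g$ with a point of $S$ (replacing $s_1$ or $s_2$ always lands in a triple meeting $S$ in one or two points, hence in $X_2$), so $p_{21}=\omega+1$. Comparing with $p_{21}=3\omega-(n-7)$ yields $\omega=(n-6)/2$, and therefore $p_{11}=3\omega=\frac{3(n-6)}{2}<2n-7$, contradicting $p_{11}\geq 2n-7$. (When $\beta\leq 1$ the clique step is vacuous but the count of $p_{21}$ is unchanged, and $\beta=0$ is impossible since it makes $X_2$ empty.) Hence Case VI(i) cannot occur when $n$ is large; in particular $n\leq 14$.
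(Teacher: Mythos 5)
Your proof is correct and follows essentially the same route as the paper's: both propagate the constant-column structure of Case \labelcref{case:VI}(\labelcref{case:VIi}) through $S=I_\omega\cup\{a,b,c\}$ (using the prior exclusion of \labelcref{case:VI}(\labelcref{case:VIii}) for $n>14$) to conclude $X_1=\binom{S}{3}$, and then contradict equitability by counting $X_1$-neighbours of $X_2$-vertices. The only differences are in the bookkeeping: you dispose of the triples meeting $S$ in at most one point via Equation \eqref{eq:eq1} rather than the paper's direct bound on $p_{11}$ at a putative $X_1$-vertex, and you phrase the final contradiction as $p_{21}=\omega+1\neq 3\omega-(n-7)$ rather than comparing the $X_1$-valencies of the two kinds of $X_2$-vertices.
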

\begin{proof}
	Suppose $n>14$, and let $t_{VI}=\overline{ab\ast}-1$. Then $p_{11}=\overline{\Gamma(abc)}=3t_{VI}$ and so $t_{VI}\geq (2n-7)/3$. We  then observe that when $n> 14$, $t_{VI}\geq (2n-7)/3> n/2$.
	
	Let $d,e\in I_{\omega}\cup I_{\beta}$. Using Equation \eqref{eq:rowdiff}, we see that
	
	$$0=\overline{ab\ast}-\overline{ac\ast}=\frac{n-4}{2}(\overline{cde}-\overline{bde})$$
	
	Therefore, $\overline{cde}=\overline{bde}$. Similarly, we can show that $\overline{ade}=\overline{cde}=\overline{bde}$.
	
	Now fix $d\in I_{\omega}$ and let $e\in I_{\omega}\cup I_{\beta}$. Then $\overline{abd}=1$ and
	
	$$\overline{ab\ast}-\overline{ad\ast}=\frac{n-4}{2}((\overline{abe}-\overline{bce})+(\overline{cde}-\overline{ade}))=0$$
	
	$$\overline{ab\ast}-\overline{bd\ast}=\frac{n-4}{2}((\overline{abe}-\overline{ace})+(\overline{cde}-\overline{bde}))=0$$
	
	So $abd$ is in case \labelcref{case:VI} and has the following nb-array.
	
	$$
	\Gamma(abd):
	\underbrace{\begin{array}{c}
		1\\
		1\\
		1
		\end{array}}_{c}
	\underbrace{\begin{array}{cccc}
		1 & \dots & \dots & 1\\
		* & \dots & \dots & *\\
		* & \dots & \dots & *
		\end{array}}_{I_{\omega}}
	\underbrace{\begin{array}{cccc}
		0 & \dots & \dots & 0\\
		* & \dots & \dots & *\\
		* & \dots & \dots & *
		\end{array}}_{I_{\beta}}
	\begin{array}{c}  
	~\leftarrow ab\text{-row} \\
	~\leftarrow ad\text{-row} \\
	~\leftarrow bd\text{-row} 
	\end{array}
	$$
	
	Using the fact that $\overline{ade}=\overline{bde}$ for any $e\in I_{\omega}\cup I_{\beta}$, we see that the nb-array of $abd$ is in case \labelcref{case:VI}(\labelcref{case:VIii}) or case \labelcref{case:VI}(\labelcref{case:VIi}). 
	
	When $n>14$, we must be in case \labelcref{case:VI}(\labelcref{case:VIi}) (see section \ref{sec:rmVIii}). Therefore we have the following nb-array. 
	$$
	\Gamma(abd):
	\underbrace{\begin{array}{c}
		1\\
		1\\
		1
		\end{array}}_{c}
	\underbrace{\begin{array}{cccc}
		1 & \dots & \dots & 1\\
		1 & \dots & \dots & 1\\
		1 & \dots & \dots & 1
		\end{array}}_{I_{\omega}}
	\underbrace{\begin{array}{cccc}
		0 & \dots & \dots & 0\\
		0 & \dots & \dots & 0\\
		0 & \dots & \dots & 0
		\end{array}}_{I_{\beta}}
	\begin{array}{c}  
	~\leftarrow ab\text{-row} \\
	~\leftarrow ad\text{-row} \\
	~\leftarrow bd\text{-row} 
	\end{array}
	$$
	
	We can then continue iterating this process. For any distinct elements $d,e,f\in I_{\omega}\cup \{a,b,c\}$ the nb-array is as follows.
	
	$$
	\Gamma(def):
	\underbrace{\begin{array}{cccc}
		1 & \dots & \dots & 1\\
		1 & \dots & \dots & 1\\
		1 & \dots & \dots & 1
		\end{array}}_{I_{\omega}\cup \{a,b,c\}\setminus \{d,e,f\}}
	\underbrace{\begin{array}{cccc}
		0 & \dots & \dots & 0\\
		0 & \dots & \dots & 0\\
		0 & \dots & \dots & 0
		\end{array}}_{I_{\beta}}
	\begin{array}{c}  
	~\leftarrow de\text{-row} \\
	~\leftarrow df\text{-row} \\
	~\leftarrow ef\text{-row} 
	\end{array}
	$$
	
	Now suppose there is $d\in I_{\omega}\cup \{a,b,c\},g,h\in I_{\beta}$ such that $\overline{dgh}=1$. Then $dgh$ has the following nb-array.

	$$
	\Gamma(dgh):
	\underbrace{\begin{array}{cccc}
		0 & \dots & \dots & 0\\
		0 & \dots & \dots & 0\\
		* & \dots & \dots & *
		\end{array}}_{I_{\omega}\cup \{b,c\}}
	\underbrace{\begin{array}{cccc}
		* & \dots & \dots & *\\
		* & \dots & \dots & *\\
		* & \dots & \dots & *
		\end{array}}_{I_{\beta}\setminus \{g,h\}}
	\begin{array}{c}  
	~\leftarrow dg\text{-row} \\
	~\leftarrow dh\text{-row} \\
	~\leftarrow gh\text{-row} 
	\end{array}
	$$
	
	As $n>14$ $|I_{\omega}|>(n-3)/2,|I_{\omega}|+|I_{\beta}|=n-3$, we see that $|I_{\beta}\setminus \{g,h\}|<(n-4)/2$. We observe that $\overline{dg\ast}$, $\overline{dh\ast}$ is at most $|I_{\beta}\setminus \{g,h\}|+1$, and therefore $\overline{dg\ast}-\overline{dh\ast}=0$. Looking back at the nb-array of $dgh$, we have $p_{11}<(n-3)+2(n-4)/2=2n-7$, contradicting our assumption.
	
	Therefore there is no vertex $dgh$ such that $d\in I_{\omega}\cup \{a,b,c\},g,h\in I_{\beta}$ and $\overline{dgh}=1$.  
	
	Now suppose there is $g,h,i\in I_{\beta}$ such that $\overline{ghi}=1$. Then $ghi$ has the following nb-array.
	
	$$
	\Gamma(ghi):
	\underbrace{\begin{array}{cccc}
		0 & \dots & \dots & 0\\
		0 & \dots & \dots & 0\\
		0 & \dots & \dots & 0
		\end{array}}_{I_{\omega}\cup \{a,b,c\}}
	\underbrace{\begin{array}{cccc}
		* & \dots & \dots & *\\
		* & \dots & \dots & *\\
		* & \dots & \dots & *
		\end{array}}_{I_{\beta}\setminus \{g,h,i\}}
	\begin{array}{c}  
	~\leftarrow gh\text{-row} \\
	~\leftarrow gi\text{-row} \\
	~\leftarrow hi\text{-row} 
	\end{array}
	$$
	
	We see that $\overline{gh\ast},\overline{gi\ast},\overline{hi\ast}$ are at most $|I_{\beta}|-3< (n-4)/2$. Therefore, $p_{11}<3(n-4)/2<2n-7$, a contradiction. 
	
	Therefore there is no vertex $ghi$ such that $g,h,i\in I_{\beta}$ and $\overline{ghi}=1$.
	
	We have shown that a vertex $def$ is such that $\overline{def}=1$ if and only if $d,e,f\in I_{\omega}\cup \{a,b,c\}$. In other words,
	
	$$X_1=\{def\in V(\Gamma):def\subset I_{\omega}\}.$$
	
	For $g,h,i\in I_{\beta}$ or $g,h\in I_{\beta},i\in I_{\omega}\cup \{a,b,c\}$, $ghi$ is not adjacent to any vertex in $X_1$. For $g\in I_{\beta},h,i\in I_{\omega}\cup \{a,b,c\}$, $ghi$ is adjacent to $|I_{\omega}\cup \{a,b,c\}|-2$ vertices in $X_1$ ($dhi$, for $d\in I_{\omega}\cup \{a,b,c\}\setminus \{h,i\}$) This is not a regular set unless $|I_{\beta}|=1$. In this case, we have found a $\lambda_1$-equitable 2-partition.    
\end{proof}

We summarise the results of this section in the following table. The first column of the table give the numeral of a difference tuple. The second column contains the numeral given to a possible nb-array with the corresponding difference tuple. The final column gives a upper bound on $n$ for which the corresponding nb-array can be present in a $\lambda_2$-equitable 2-partition. 

\begin{table}[ht!]
	\begin{center}
		$\begin{array}{c|c|c}
		\text{tuple} & \text{nb-array} & \text{max } n\\
		\hline 
		\labelcref{case:I}     &          & 6\\
		\labelcref{case:II}    &          & 8\\
		\labelcref{case:III}   &          & 8\\
		\labelcref{case:IV}    & (\labelcref{case:IVi})      & 6\\
		\labelcref{case:IV}    & (\labelcref{case:IVii})     & 8\\
		\labelcref{case:V}     & (\labelcref{case:Vi})     & \text{none}\\
		\labelcref{case:V}     & (\labelcref{case:Vii})     & \text{none}\\
		\labelcref{case:VI}    & (\labelcref{case:VIi})     & 14\\
		\labelcref{case:VI}    & (\labelcref{case:VIii})     & 14\\
		\labelcref{case:VI}    & (\labelcref{case:VIiii})    & \text{none}\\
		\labelcref{case:VI}    & (\labelcref{case:VIiv})     & 8	
		\end{array}$
	\end{center}
	\caption{Bounds on $n$ given the existence of a vertex with specified nb-array.}\label{tab:posnba}
\end{table}

\section{Classification of $\lambda_2$ partitions for $n>14$}\label{sec:ng14}

Given the results of Section \ref{sec:Jt2}, we now assume $n>14$. This means that the only possible nb-arrays are \labelcref{case:VI}(\labelcref{case:VIiii}),\labelcref{case:V}(\labelcref{case:Vi}) and \labelcref{case:V}(\labelcref{case:Vii}).

\subsection{Case \labelcref{case:V}(\labelcref{case:Vii})}\label{sec:caseVii} 

In this section, we prove the following.

\begin{thm}\label{thm:Vii}
	Let $abc$ be a vertex of $\Gamma$ with $\overline{abc}=1$ be such that we have nb-array as in \labelcref{case:V}(\labelcref{case:Vii}).
	
	Then the partition $X$ can be constructed from a matching on the elements of $\left[n\right]$, which corresponds to an instance of a partition $\Pi_3$.
\end{thm}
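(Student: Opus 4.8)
The plan is to leverage the rigidity already proved for $n>14$: every $X_1$-vertex carries one of the three nb-arrays \labelcref{case:VI}(\labelcref{case:VIiii}), \labelcref{case:V}(\labelcref{case:Vi}), \labelcref{case:V}(\labelcref{case:Vii}). I would first show that the hypothesis pins down the type globally. From the given \labelcref{case:V}(\labelcref{case:Vii})-array of $abc$, write $I_\omega$ for the all-ones columns, $B$ for the $(1,1,0)^{T}$ columns, $I_\beta$ for the zero columns, and let $d,e$ index the $(1,0,0)^{T}$ and $(0,1,0)^{T}$ columns. Equation \eqref{eq:rowdiff} together with $\overline{ab\ast}=\overline{ac\ast}$ and $\overline{ac\ast}-\overline{bc\ast}=(n-4)/2$ gives $|B|=(n-6)/2$ and $|I_\omega|+|I_\beta|=(n-4)/2$, so that $p_{11}=3|I_\omega|+n-4$ lies in the interval $[2n-7,\,5n/2-10]$. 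For $n>14$ this interval excludes the values $5n/2-7$ and $3n-15$ attached to the arrays \labelcref{case:V}(\labelcref{case:Vi}) and \labelcref{case:VI}(\labelcref{case:VIiii}); since $p_{11}$ is constant over $X_1$, every $X_1$-vertex is then of type \labelcref{case:V}(\labelcref{case:Vii}).

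The engine of the argument is local propagation through Equation \eqref{eq:rowdiff}. Starting from the fully known array of $abc$, I would compute the indicators of the triples lying one step away --- for instance $\overline{ade},\overline{bde},\overline{cde}$ and the individual entries of the columns indexed by $B$ and $I_\omega$ --- and use them to reconstruct the nb-arrays of the neighbours $abd$, $ace$, the vertices $abf,acf$ with $f\in B$, and the vertices $abi,aci,bci$ with $i\in I_\omega$. Each such neighbour is again of type \labelcref{case:V}(\labelcref{case:Vii}) by the previous paragraph, and reading its array exposes a new matched pair. In particular, inspecting a vertex built from one element of $B$ and one of $I_\omega$ pairs these two blocks off against one another; this is the step that both forces $|I_\omega|=(n-6)/2$ (hence $|I_\beta|=1$ and $p_{11}=5n/2-13$) and extends the three initial pairs $\{c,d\}$, $\{b,e\}$, $\{a,g\}$, where $I_\beta=\{g\}$, to a perfect matching $M$ of $[n]$, while assigning each element to one of two complementary transversals $U,W$ of $M$.

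Once $M$ and the bipartition $\{U,W\}$ are recovered, I would verify that $X$ is the instance of $\Pi_3$ determined by this data. Concretely, I would show that an arbitrary triple lies in $X_1$ precisely when it is $M$-independent and meets both $U$ and $W$ --- the defining property of the cell $X_3$ in Lemma \ref{thm:knJn3} --- and lies in $X_2$ otherwise, by matching its indicator against the propagated local structure; connectedness of $J(n,3)$ guarantees that the pairings recovered from overlapping neighbourhoods are mutually consistent, so that $M$ and $\{U,W\}$ are well defined on all of $[n]$.

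I expect the main obstacle to be the bookkeeping inside the propagation. At every neighbour one must check that the competing arrays \labelcref{case:V}(\labelcref{case:Vi}) and \labelcref{case:VI}(\labelcref{case:VIiii}) are genuinely excluded by the entries already determined, and one must confirm that the pairing of $B$ against $I_\omega$ obtained from different base vertices agrees, so that $M$ is unambiguous. The crucial quantitative point hidden in this is the equality $|I_\beta|=1$: it is what rules out spurious partitions possessing a larger zero-block, and what ultimately identifies the quotient matrix with that of $\Pi_3$.
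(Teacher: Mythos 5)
Your proposal is correct and follows essentially the same route as the paper: establish $|I_\beta|=1$ and the quotient matrix $p_{11}=5n/2-13$, propagate nb-arrays through neighbours via Equation \eqref{eq:rowdiff} to extract a perfect matching $M$ (pairing $I_\gamma\cup\{a,b,c\}$ with $I_\omega\cup\{\hat a,\hat b,\hat c\}$, with uniqueness of each partner forced by the impossibility of two identical single-$1$ columns), and then verify that a triple lies in $X_1$ exactly when it is $M$-independent and meets both transversals, which is the defining property of $\Pi_3$. The only cosmetic difference is that you exclude the competing arrays \labelcref{case:V}(\labelcref{case:Vi}) and \labelcref{case:VI}(\labelcref{case:VIiii}) globally up front via the interval $[2n-7,\,5n/2-10]$ for $p_{11}$, whereas the paper re-derives the case at each propagation step.
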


To do this, we start by characterising the quotient matrices when we are in this case.

\begin{lemma}
	Let $abc$ be a vertex of $\Gamma$ with $\overline{abc}=1$ be such that we have nb-array
	$$
	\Gamma(abc):
	\underbrace{\begin{array}{c}
		1\\
		0\\
		0
		\end{array}}_{d}
	\underbrace{\begin{array}{c}
		0\\
		1\\
		0
		\end{array}}_{e}
	\underbrace{\begin{array}{cccc}
		1 & \dots & \dots & 1\\
		1 & \dots & \dots & 1\\
		1 & \dots & \dots & 1
		\end{array}}_{I_\omega}
	\underbrace{\begin{array}{cccc}
		1 & \dots & \dots & 1\\
		1 & \dots & \dots & 1\\
		0 & \dots & \dots & 0
		\end{array}}_{I_{\gamma}}
	\underbrace{\begin{array}{cccc}
		0 & \dots & \dots & 0\\
		0 & \dots & \dots & 0\\
		0 & \dots & \dots & 0
		\end{array}}_{I_{\beta}}
	\begin{array}{c}  
	~\leftarrow ab\text{-row} \\
	~\leftarrow ac\text{-row} \\
	~\leftarrow bc\text{-row} 
	\end{array}
	$$
	Then the quotient matrix for the partition is 
	\begin{equation*}
	\left(\begin{matrix}
	5n/2-13 & n/2+4\\
	3n/2-1 & 3n/2-3
	\end{matrix}\right)
	\end{equation*}
	
\end{lemma}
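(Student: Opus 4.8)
The plan is to determine the three block sizes appearing in the nb-array and then read off the quotient matrix. Write $\omega=|I_\omega|$, $\gamma=|I_\gamma|$ and $\beta=|I_\beta|$. Since $\Gamma(abc)$ has $n-3$ columns, two of which are the $d$- and $e$-columns, we have $\omega+\gamma+\beta=n-5$. Summing each row of the nb-array gives $ab$-row and $ac$-row sums equal to $1+\omega+\gamma$ and $bc$-row sum equal to $\omega$, so the difference-tuple entry $\overline{ac\ast}-\overline{bc\ast}=1+\gamma$ must equal $(n-4)/2$, forcing $\gamma=(n-6)/2$ and hence $\omega+\beta=(n-4)/2$. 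Counting the ones in the nb-array (and using $\overline{abc}=1$) yields $p_{11}=\overline{\Gamma(abc)}=2+3\omega+2\gamma$.

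The obstacle is that this single nb-array does not separate $\omega$ from $\beta$: the data at $abc$ is self-consistent for every admissible value of $\omega$. To break the tie I would propagate to the adjacent cell member $abd$, noting that $\overline{abd}=1$, so $abd\in X_1$ and its nb-array must again be admissible with the same row-sum $p_{11}$. Applying Equation \eqref{eq:rowdiff} together with its analogue obtained by interchanging the roles of $a$ and $b$ (using the known row-differences $\overline{ab\ast}-\overline{ac\ast}=0$ and $\overline{ab\ast}-\overline{bc\ast}=(n-4)/2$), and running over the second index through $c,e$ and the members of $I_\omega,I_\gamma,I_\beta$, determines every entry of the nb-array of $abd$ except for the common value $s:=\overline{ade}=\overline{bde}=\overline{cde}\in\{0,1\}$. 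Re-summing the reconstructed nb-array of $abd$ gives $p_{11}=1+2\omega+3\gamma+2s+\beta$; equating this with $2+3\omega+2\gamma$ and substituting $\gamma=(n-6)/2$, $\omega+\beta=(n-4)/2$ collapses to the single relation $s+\beta=1$, so $(s,\beta)\in\{(0,1),(1,0)\}$.

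The hard part is then excluding $(s,\beta)=(1,0)$, and here I would invoke the standing hypothesis $n>14$, under which the only admissible nb-arrays are \labelcref{case:Vi}, \labelcref{case:Vii} and \labelcref{case:VIiii}. When $s=1$ the nb-array of $abd$ contains, in rows $(ab,ad,bd)$, the column $(\overline{abe},\overline{ade},\overline{bde})=(0,1,1)$ coming from $e$, together with $\omega\ge 1$ columns $(\overline{abi},\overline{adi},\overline{bdi})=(1,0,1)$ coming from $I_\omega$. These are two distinct columns each having exactly two entries equal to $1$, with their zero-entries in different rows. Each of the three admissible nb-arrays has all of its weight-two columns in a single common pattern, so no permutation of the rows can realise both $(0,1,1)$ and $(1,0,1)$ simultaneously; this contradiction rules out $\beta=0$ and forces $(s,\beta)=(0,1)$.

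With $\beta=1$ we obtain $\omega=(n-6)/2=\gamma$, whence $p_{11}=2+3\omega+2\gamma=5n/2-13$. The remaining entries then follow from the general identities for a $\lambda_2$-equitable $2$-partition: the row-sum condition gives $p_{12}=k-p_{11}=3(n-3)-p_{11}$, and by Lemma \ref{lem:theta} we have $p_{21}=p_{11}-\lambda_2=p_{11}-(n-7)$ and $p_{22}=k-p_{21}$, which yields the claimed quotient matrix. All of the work lies in the reconstruction of the nb-array of $abd$ and the weight-two pattern argument excluding $\beta=0$; the rest is bookkeeping.
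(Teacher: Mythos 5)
Your proposal is correct and follows essentially the same route as the paper: both arguments reconstruct the nb-array of $abd$ from Equation \eqref{eq:rowdiff} up to the single unknown $s=\overline{ade}=\overline{bde}=\overline{cde}$ and then invoke the $n>14$ classification of admissible nb-arrays to force $|I_\beta|=1$ (the paper pins down case \labelcref{case:V}(\labelcref{case:Vii}) via the weight-one $c$-column and counts its unique $(1,0,0)$-column, while you exclude $s=1$ via two weight-two columns with zeros in different rows --- a cosmetic difference). One remark: your derivation correctly gives $p_{21}=p_{11}-\lambda_2=3n/2-6$, which is consistent with $p_{21}+p_{22}=3n-9$ and with the quotient matrix of $\Pi_3$; the entry $3n/2-1$ printed in the statement is a typo, so your final step does in fact produce the right matrix even though it does not literally match the one ``claimed''.
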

\begin{proof}
	By using Equation \eqref{eq:rowdiff}, we see that 
	\begin{align*}
	\overline{ade}&=\overline{bde}=\overline{cde} \\
	\overline{adg}&=\overline{cdg}=0, \overline{bdg}=1 \qquad (for\quad g\in I_{\omega}\cup I_{\beta})\\
	\overline{adg}&=\overline{bdg}=1,\overline{cdg}=0\qquad (for\quad g\in I_{\gamma})
	\end{align*}
	Consider $abd$. We have $\overline{abd}=1$, and it has the following nb-array.
	
	$$
	\Gamma(abd):
	\underbrace{\begin{array}{c}
		0\\
		1\\
		0
		\end{array}}_{c}
	\underbrace{\begin{array}{c}
		*\\
		0\\
		*
		\end{array}}_{e}
	\underbrace{\begin{array}{cccc}
		1 & \dots & \dots & 1\\
		1 & \dots & \dots & 1\\
		1 & \dots & \dots & 1
		\end{array}}_{I_\gamma}
	\underbrace{\begin{array}{cccc}
		1 & \dots & \dots & 1\\
		1 & \dots & \dots & 1\\
		0 & \dots & \dots & 0
		\end{array}}_{I_{\omega}\setminus \{d\}}
	\underbrace{\begin{array}{cccc}
		1 & \dots & \dots & 1\\
		0 & \dots & \dots & 0\\
		0 & \dots & \dots & 0
		\end{array}}_{I_{\beta}}
	\begin{array}{c}  
	~\leftarrow bd\text{-row} \\
	~\leftarrow ab\text{-row} \\
	~\leftarrow ad\text{-row} 
	\end{array}
	$$
	
	Note that the $\ast$ are equal here. By using Equation \eqref{eq:rowdiff}, we also see that $abd$ is in case \labelcref{case:V}. Observing the $c$-column, we deduce that $|I_{\beta}|=1$ and the $e$-column contains only zeros. Then $p_{11}=5n/2-13$. We deduce the remaining values of the quotient matrix from Lemma \ref{lem:theta}.
\end{proof}

Therefore, we can assume we have a vertex $\hat{a}bc$ such that $\overline{\hat{a}bc}=1$ and the nb-array
$$
\Gamma(\hat{a}bc):
\underbrace{\begin{array}{c}
	0\\
	0\\
	0
	\end{array}}_{a}
\underbrace{\begin{array}{c}
	0\\
	1\\
	0
	\end{array}}_{\hat{b}}
\underbrace{\begin{array}{c}
	1\\
	0\\
	0
	\end{array}}_{\hat{c}}
\underbrace{\begin{array}{cccc}
	1 & \dots & \dots & 1\\
	1 & \dots & \dots & 1\\
	1 & \dots & \dots & 1
	\end{array}}_{I_\omega}
\underbrace{\begin{array}{cccc}
	1 & \dots & \dots & 1\\
	1 & \dots & \dots & 1\\
	0 & \dots & \dots & 0
	\end{array}}_{I_{\gamma}}
\begin{array}{c}  
~\leftarrow \hat{a}b\text{-row} \\
~\leftarrow \hat{a}c\text{-row} \\
~\leftarrow bc\text{-row} 
\end{array}
$$
Note that we have been careful to pick $a$ as the index of the unique column of all 0s and $\hat{a}$ as the index in $\hat{a}bc$ such that $\overline{bc\ast}$ is minimum. For the moment, we will let 
\begin{eqnarray*}
	W&=&I_{\omega}\cup \{\hat{a},\hat{b},\hat{c}\},\\
	U&=&I_{\gamma}\cup \{a,b,c\}.
\end{eqnarray*}
Then we have the following using Equation \eqref{eq:rowdiff}.

\begin{align*}
&\overline{\hat{a}\hat{b}\hat{c}}=\overline{b\hat{b}\hat{c}}=\overline{\hat{b}c\hat{c}} \\
&\overline{a\hat{a}\hat{c}}=\overline{ac\hat{c}}=0, \overline{ab\hat{c}}=1 \\
&\overline{a\hat{a}\hat{b}}=\overline{ab\hat{b}}=0,\overline{ac\hat{b}}=1 \\
&\overline{\hat{a}\hat{c}g}=\overline{b\hat{c}g}=1,\overline{c\hat{c}g}=0\qquad &(for\quad g\in I_{\gamma})\\
&\overline{\hat{a}\hat{b}g}=\overline{c\hat{b}g}=1,\overline{b\hat{b}g}=0 \qquad &(for\quad g\in I_{\gamma}).\\
&\overline{a\hat{a}g}=\overline{abg}=\overline{acg}\qquad &(for\quad g\in I_{\gamma}).\\
&\overline{\hat{bgh}}=\overline{cgh}=0,\overline{\hat{a}gh}=1\qquad &(for\quad g,h\in I_{\gamma})\\
&\overline{\hat{a}\hat{g}h}=\overline{b\hat{g}h}=\overline{c\hat{g}h}\qquad &(for\quad \hat{g}\in I_{\omega},h\in I_{\gamma}).
\end{align*}
Let $d\in I_{\gamma}$. Then we have the following nb-array.
$$
\Gamma(\hat{a}bd):
\underbrace{\begin{array}{c}
	0\\
	*\\
	*
	\end{array}}_{a}
\underbrace{\begin{array}{c}
	0\\
	1\\
	0
	\end{array}}_{\hat{b}}
\underbrace{\begin{array}{c}
	1\\
	1\\
	1
	\end{array}}_{\hat{c}}
\underbrace{\begin{array}{c}
	1\\
	1\\
	0
	\end{array}}_{c}
\underbrace{\begin{array}{cccc}
	1 & \dots & \dots & 1\\
	* & \dots & \dots & *\\
	* & \dots & \dots & *
	\end{array}}_{I_\omega}
\underbrace{\begin{array}{cccc}
	1 & \dots & \dots & 1\\
	1 & \dots & \dots & 1\\
	0 & \dots & \dots & 0
	\end{array}}_{I_{\gamma}\setminus \{d\}}
\begin{array}{c}  
~\leftarrow \hat{a}b\text{-row} \\
~\leftarrow \hat{a}d\text{-row} \\
~\leftarrow bd\text{-row} 
\end{array}
$$

Note that any two $\ast$ in the same column must be the same. By applying Equation \eqref{eq:rowdiff} and noting that there is a column containing exactly one 1, we see that the nb-array of $\hat{a}bd$ must be in case \labelcref{case:V}(\labelcref{case:Vii}). So the $a$-column must be all 0, and by considering $p_{11}$ we deduce that there exists a unique element $\hat{d}\in I_{\omega}$ such that $\overline{\hat{a}d\hat{d}}=0=\overline{bd\hat{d}}$. If there was another $e\in I_{\gamma}$ with $\overline{\hat{a}e\hat{d}}=0=\overline{be\hat{d}}$, then $\overline{\hat{a}b\hat{d}}=1$ and has $d$ and $e$-columns equal and containing exactly one entry of 1. However, there are no nb-arrays in the cases \labelcref{case:I,case:II,case:III,case:IV,case:V,case:VI} such that this occurs, giving a contradiction. Therefore we have a matching of $U$ with $W$, given by
\begin{equation*}M=\{(a,\hat{a}),(b,\hat{b}),(c,\hat{c})\}\cup \{(d,\hat{d}):d\in I_{\gamma}\}.
\end{equation*}

Using this matching, we can see the following.
\begin{lemma}\label{lem:cViipr}
	We have the following: 
	\begin{enumerate}
		\item For $d\in I_{\gamma}\cup \{c\}$ we have $\overline{\hat{a}bd}=1$ and nb-array 
		$$
		\Gamma(\hat{a}bd):
		\underbrace{\begin{array}{c}
			0\\
			0\\
			0
			\end{array}}_{a}
		\underbrace{\begin{array}{c}
			0\\
			1\\
			0
			\end{array}}_{\hat{b}}
		\underbrace{\begin{array}{c}
			1\\
			0\\
			0
			\end{array}}_{\hat{d}}
		\underbrace{\begin{array}{cccc}
			1 & \dots & \dots & 1\\
			1 & \dots & \dots & 1\\
			1 & \dots & \dots & 1
			\end{array}}_{I_\omega\cup \{\hat{c}\}\setminus \{\hat{d}\}}
		\underbrace{\begin{array}{cccc}
			1 & \dots & \dots & 1\\
			1 & \dots & \dots & 1\\
			0 & \dots & \dots & 0
			\end{array}}_{I_{\gamma}\cup \{c\}\setminus \{d\}}
		\begin{array}{c}  
		~\leftarrow \hat{a}b\text{-row} \\
		~\leftarrow \hat{a}d\text{-row} \\
		~\leftarrow bd\text{-row} 
		\end{array}
		$$ 
		\item For $d\in I_{\gamma}\cup \{c\}$ we have $\overline{\hat{a}b\hat{d}}=1$ and nb-array$$
		\Gamma(\hat{a}b\hat{d}):
		\underbrace{\begin{array}{c}
			0\\
			0\\
			0
			\end{array}}_{\hat{b}}
		\underbrace{\begin{array}{c}
			0\\
			1\\
			0
			\end{array}}_{\hat{a}}
		\underbrace{\begin{array}{c}
			1\\
			0\\
			0
			\end{array}}_{d}
		\underbrace{\begin{array}{cccc}
			1 & \dots & \dots & 1\\
			1 & \dots & \dots & 1\\
			0 & \dots & \dots & 0
			\end{array}}_{I_\omega\cup \{\hat{c}\}\setminus \{\hat{d}\}}
		\underbrace{\begin{array}{cccc}
			1 & \dots & \dots & 1\\
			1 & \dots & \dots & 1\\
			1 & \dots & \dots & 1
			\end{array}}_{I_{\gamma}\cup \{c\}\setminus \{d\}}
		\begin{array}{c}  
		~\leftarrow \hat{a}b\text{-row} \\
		~\leftarrow b\hat{d}\text{-row} \\
		~\leftarrow \hat{b}\hat{d}\text{-row} 
		\end{array}
		$$
		\item For $d\in I_{\gamma}\cup \{a\}$ we have $\overline{\hat{d}bc}=1$ and nb-array
		$$
		\Gamma(\hat{d}bc):
		\underbrace{\begin{array}{c}
			0\\
			0\\
			0
			\end{array}}_{d}
		\underbrace{\begin{array}{c}
			0\\
			1\\
			0
			\end{array}}_{\hat{b}}
		\underbrace{\begin{array}{c}
			1\\
			0\\
			0
			\end{array}}_{\hat{c}}
		\underbrace{\begin{array}{cccc}
			1 & \dots & \dots & 1\\
			1 & \dots & \dots & 1\\
			1 & \dots & \dots & 1
			\end{array}}_{I_\omega\cup \{\hat{a}\}\setminus \{\hat{d}\}}
		\underbrace{\begin{array}{cccc}
			1 & \dots & \dots & 1\\
			1 & \dots & \dots & 1\\
			0 & \dots & \dots & 0
			\end{array}}_{I_{\gamma}\cup \{a\}\setminus \{d\}}
		\begin{array}{c}  
		~\leftarrow b\hat{d}\text{-row} \\
		~\leftarrow c\hat{d}\text{-row} \\
		~\leftarrow bc\text{-row} 
		\end{array}
		$$ 
	\end{enumerate}
\end{lemma}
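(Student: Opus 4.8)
The plan is to prove all three statements by directly filling in the entries of each nb-array, bootstrapping from the matching $M$ and the list of relations derived immediately above the lemma, and then closing off the few undetermined entries using the rigidity we now have for $n>14$: as recorded at the start of Section \ref{sec:ng14} (and Table \ref{tab:posnba}), every vertex in $X_1$ has an nb-array of shape \labelcref{case:V}(\labelcref{case:Vi}), \labelcref{case:V}(\labelcref{case:Vii}) or \labelcref{case:VI}(\labelcref{case:VIiii}), so once enough entries of a given neighbourhood are known, its whole column pattern is forced.

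First I would treat Part 1. For $d\in I_\gamma$ this is essentially the partial nb-array of $\hat{a}bd$ already displayed before the statement together with the identification of the matched element $\hat d\in I_\omega$: since $\hat{a}bd$ is in case \labelcref{case:V}(\labelcref{case:Vii}), the $a$-column is forced to be all zero, the unique partner $\hat d$ supplies the $(1,0,0)$ column, and a count against $p_{11}=5n/2-13$ fills the remaining columns, so that $\{\hat c\}\cup I_\omega\setminus\{\hat d\}$ carries the all-ones columns and $\{c\}\cup I_\gamma\setminus\{d\}$ the $(1,1,0)$ columns. The case $d=c$ is the standardised vertex $\hat a bc$ itself, with $\hat d=\hat c$. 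A small but essential point to nail down here is that $d\mapsto\hat d$ is symmetric and bijective between $U$ and $W$; this is exactly the uniqueness argument given just before the lemma (a second $e\in I_\gamma$ matched to the same $\hat d$ would force $\hat a b\hat d$ to have two equal columns each with a single $1$, which no surviving nb-array admits).

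Next I would establish Parts 2 and 3, which follow the same template but now feed on the triple-indicators already pinned down in Part 1. For Part 3 I would look at $\hat d bc$ with $d\in I_\gamma$ (the value $d=a$ gives $\hat a bc$): the relations for triples through $\hat g\in I_\omega$ and $h\in I_\gamma$, together with Equation \eqref{eq:rowdiff} applied to the row pairs $b\hat d\text{-row},c\hat d\text{-row}$ and $b\hat d\text{-row},bc\text{-row}$, determine the row-sum differences, force $\overline{\hat d bc}=1$, and show the neighbourhood is again of shape \labelcref{case:V}(\labelcref{case:Vii}) with the all-zero column indexed by the partner $d$. Part 2, for $\hat a b\hat d$, is handled symmetrically: here two of the three vertices lie in $W$, so the roles of the all-ones and $(1,1,0)$ column-blocks are interchanged, and each relevant entry is a triple-indicator that has already appeared as an entry of a previously determined nb-array in Part 1 or Part 3.

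The main obstacle is resolving the undetermined $\ast$-entries, i.e.\ proving that the ambiguous columns (for instance the $\hat a d$- and $bd$-rows over $I_\omega$ in the preliminary array of $\hat a bd$) are constant and take the claimed values. This cannot be done by Equation \eqref{eq:rowdiff} alone; it requires simultaneously using the exact count $p_{11}=5n/2-13$ to fix the number of $1$s in each neighbourhood and the fact that, for $n>14$, only the three nb-array shapes survive, so any locally consistent but non-conforming column pattern is excluded. Careful bookkeeping of which matched pair $(d,\hat d)$ lands in which column-block, and checking that this is consistent across all three nb-arrays, is where the care is needed; once it is done, the three displayed arrays follow.
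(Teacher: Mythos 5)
Your proposal is correct and follows essentially the same route as the paper: the paper's own proof is a one-line appeal to "the definition of the matching $M$ and similar arguments to the above discussion," and that discussion is precisely what you reconstruct — the relations from Equation \eqref{eq:rowdiff}, the elimination of all nb-array shapes except \labelcref{case:V}(\labelcref{case:Vii}) via the fixed quotient matrix $p_{11}=5n/2-13$, and the uniqueness of the matched partner $\hat d$. Your explicit attention to resolving the $\ast$-entries and to the consistency of the matching across the three arrays fills in exactly the bookkeeping the paper leaves implicit.
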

\begin{proof}
	This follows from the definition of the matching $M$ and similar arguments to the above discussion.
\end{proof}

\begin{cor}\label{cor:genVii}
	For all distinct $d,e,f\in U$, we have $\overline{\hat{d}ef}=1,\overline{\hat{d}e\hat{f}}=1$, and the following nb-arrays:
	\begin{enumerate}
		\item $$\Gamma(\hat{d}ef):
		\underbrace{\begin{array}{c}
			0\\
			0\\
			0
			\end{array}}_{d}
		\underbrace{\begin{array}{c}
			0\\
			1\\
			0
			\end{array}}_{\hat{e}}
		\underbrace{\begin{array}{c}
			1\\
			0\\
			0
			\end{array}}_{\hat{f}}
		\underbrace{\begin{array}{cccc}
			1 & \dots & \dots & 1\\
			1 & \dots & \dots & 1\\
			1 & \dots & \dots & 1
			\end{array}}_{W\setminus \{\hat{d},\hat{e},\hat{f} \}}
		\underbrace{\begin{array}{cccc}
			1 & \dots & \dots & 1\\
			1 & \dots & \dots & 1\\
			0 & \dots & \dots & 0
			\end{array}}_{U\setminus \{d,e,f\}}
		\begin{array}{c}  
		~\leftarrow \hat{d}e\text{-row} \\
		~\leftarrow \hat{d}f\text{-row} \\
		~\leftarrow ef\text{-row} 
		\end{array}$$
		\item  $$
		\Gamma(\hat{d}e\hat{f}):
		\underbrace{\begin{array}{c}
			0\\
			0\\
			0
			\end{array}}_{\hat{e}}
		\underbrace{\begin{array}{c}
			0\\
			1\\
			0
			\end{array}}_{d}
		\underbrace{\begin{array}{c}
			1\\
			0\\
			0
			\end{array}}_{f}
		\underbrace{\begin{array}{cccc}
			1 & \dots & \dots & 1\\
			1 & \dots & \dots & 1\\
			0 & \dots & \dots & 0
			\end{array}}_{W\setminus \{\hat{d},\hat{e},\hat{f}\}}
		\underbrace{\begin{array}{cccc}
			1 & \dots & \dots & 1\\
			1 & \dots & \dots & 1\\
			1 & \dots & \dots & 1
			\end{array}}_{U\setminus \{d,e,f\}}
		\begin{array}{c}  
		~\leftarrow \hat{d}e\text{-row} \\
		~\leftarrow e\hat{f}\text{-row} \\
		~\leftarrow \hat{d}\hat{f}\text{-row} 
		\end{array}	$$	
	\end{enumerate}
\end{cor}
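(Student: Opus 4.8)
The plan is to read Corollary~\ref{cor:genVii} as the full symmetrisation, over all triples, of Lemma~\ref{lem:cViipr}: the latter pins down the nb-arrays of the vertices $\hat{a}bc$, $\hat{a}bd$, $\hat{a}b\hat{d}$ and $\hat{d}bc$ built from the three fixed reference pairs, while the corollary asserts that the \emph{same} local picture appears around every vertex of the form $\hat{d}ef$ and $\hat{d}e\hat{f}$. I would prove it by propagation, exactly in the spirit of the discussion preceding the corollary: each step transfers indicator values through Equation~\eqref{eq:rowdiff}, and, since $n>14$, the only admissible nb-arrays are V(i), V(ii) and VI(iii) (Table~\ref{tab:posnba}), so as soon as a handful of entries of an nb-array are forced the whole array is determined. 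In effect I want to show that re-running the matching/Lemma~\ref{lem:cViipr} derivation with any V(ii)-type $X_1$ vertex as the base reproduces the matching $M$ and bipartition $\{U,W\}$, so that the corollary's arrays are Lemma~\ref{lem:cViipr}'s arrays merely relabelled.

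Concretely, I would spread outward from the base triple $abc$. Starting from the already-determined array of a treated vertex $\hat{d}eg$, its columns indexed by the remaining elements of $U$ have shape $(1,1,0)$ (or $(1,1,1)$), and the top entry of such a column reads off as $\overline{\hat{d}ef}=1$ for each new $f\in U$, placing $\hat{d}ef$ in $X_1$; iterating frees first the two $U$-coordinates (keeping the $W$-element fixed, as in Lemma~\ref{lem:cViipr}(1)) and then, using Lemma~\ref{lem:cViipr}(3), the $W$-coordinate as well. To determine the \emph{full} nb-array of such a vertex $\hat{d}ef$, I would locate its two single-$1$ columns $(0,1,0)$ and $(1,0,0)$, whose indices must be the $M$-partners $\hat{e},\hat{f}$, together with one $(1,1,0)$ column; the presence of these excludes V(i) (no single-$1$ column) and VI(iii) (which would require a $(0,0,1)$ column and no $(1,1,0)$ column), forcing case V(ii) with precisely the array stated in part~(1). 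Part~(2), for the mirror type $\hat{d}e\hat{f}$, follows from the identical propagation started from Lemma~\ref{lem:cViipr}(2), the $U\leftrightarrow W$ reflection of part~(1).

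The step I expect to be the main obstacle is not any single evaluation of Equation~\eqref{eq:rowdiff} but the \emph{global coherence} of the data being propagated: I must verify that the all-zero column, the pair of single-$1$ columns, and the $(1,1,1)$-versus-$(1,1,0)$-versus-$(0,0,0)$ splitting seen from a fresh base vertex $\hat{d}ef$ agree with the globally fixed matching $M$ and bipartition $\{U,W\}$, rather than defining some new, conflicting pairing. I would settle this with the two facts already used to construct $M$: the value $p_{11}=5n/2-13$ forces each such vertex to have exactly one all-zero column, and two distinct elements of $U$ cannot share an $M$-partner, since that would produce a vertex whose nb-array has two equal columns each containing a single $1$ --- a configuration excluded in every one of the cases \labelcref{case:I,case:II,case:III,case:IV,case:V,case:VI}. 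With this coherence established, each propagation step is routine and the two nb-arrays of the corollary drop out.
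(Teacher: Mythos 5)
Your proposal is correct and takes essentially the same route as the paper, whose proof of Corollary~\ref{cor:genVii} is just the repeated, consecutive application of Lemma~\ref{lem:cViipr} starting from $\hat{a}bc$. Your extra care about the global coherence of the propagated matching with $M$ (via the uniqueness of $M$-partners and the exclusion of two equal single-$1$ columns) fills in a detail the paper leaves implicit but does not change the argument.
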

\begin{proof}
	This follows from the repeated application of Lemma \ref{lem:cViipr}. If $n>8$, we can we can always apply the lemma consecutively, starting from $\hat{a}bc$ to attain the nb-array of $\hat{d}ef,\hat{d}e\hat{f}$.
\end{proof}

\begin{proof}[Proof of Theorem \ref{thm:Vii}]
	Any vertex not considered directly in Corrolary \ref{cor:genVii} is of the form $def,d\hat{d}e,\hat{d}\hat{e}\hat{f}$ or $d\hat{d}\hat{f}$ for distinct elements $d,e,f\in U$. The first two appear in the nb-array of $\hat{d}ef$, and the last two appear in the nb-array of $\hat{d}e\hat{f}$. By observing these nb-arrays, we see that all have value 0. This proves that the partition is exactly the partition $\Pi_3$, with matching $M$ in Section \ref{sec:knJn3}.
\end{proof}

\subsection{Case \labelcref{case:V}(\labelcref{case:Vi})}

In this section, we prove the following.

\begin{thm}\label{thm:Vi}
	Let $abc$ be a vertex of $\Gamma$ with $\overline{abc}=1$ be such that we have nb-array as in \labelcref{case:V}(\labelcref{case:Vi}).
	
	Then the partition $X$ can be constructed from a matching on the elements of $\left[n\right]$, which corresponds to an instance of a partition $\Pi_1$.
\end{thm}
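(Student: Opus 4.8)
The plan is to show that $X$ coincides with the ``mixed versus monochromatic'' partition attached to a balanced bipartition of $[n]$, which is an instance of $\Pi_1$. A useful preliminary observation is that $\Pi_1$ does not in fact depend on the matching: its distinguished cell is the cell $X_1$ of the $3$-partition $\Pi$ of Section~\ref{sec:knJn3}, namely the triples contained in $U$ or in $W$. Thus $\Pi_1$ simply separates the \emph{monochromatic} triples (indicator $0$) from the \emph{mixed} triples (indicator $1$), and this requires only the bipartition $\{U,W\}$. Accordingly, the goal is to recover such a bipartition from the nb-array of $abc$ and then to prove that $\overline{xyz}=1$ precisely when $xyz$ is mixed; any matching between the recovered parts then realises $X$ as an instance of $\Pi_1$.

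First I would pin down the quotient matrix. Since the difference tuple is $(0,(n-4)/2)$, reading off the nb-array gives $|I_\gamma|=(n-4)/2$ and $p_{11}=3|I_\omega|+2|I_\gamma|=5n/2-7-3|I_\beta|$. Exactly as in the opening lemma of Section~\ref{sec:caseVii}, I would pass to a mixed neighbour $abd$ with $d\in I_\gamma$, use that for $n>14$ the only admissible nb-arrays are \labelcref{case:V}(\labelcref{case:Vi}), \labelcref{case:V}(\labelcref{case:Vii}) and \labelcref{case:VI}(\labelcref{case:VIiii}) to force $abd$ back into case \labelcref{case:V}(\labelcref{case:Vi}), and conclude $I_\beta=\emptyset$ (equivalently, that $\{a\}\cup I_\omega$ together with $\{b,c\}\cup I_\gamma$ exhaust $[n]$). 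Hence $|I_\omega|=n/2-1$, $|I_\gamma|=n/2-2$ and the quotient matrix is $\left(\begin{smallmatrix} 5n/2-7 & n/2-2\\ 3n/2 & 3n/2-9 \end{smallmatrix}\right)$, matching $A(\Gamma/\Pi_1)$. This identifies the candidate parts $W:=\{a\}\cup I_\omega$ and $U:=\{b,c\}\cup I_\gamma$, each of size $n/2$, with $a$ the lone vertex of $abc$ in $W$ and $b,c$ its two vertices in $U$.

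Next I would extract all the local information that \eqref{eq:rowdiff} gives at $abc$. Applying it to pairs of outer indices already settles every triple meeting $\{a,b,c\}$ in a pair: for $d,e\in I_\omega$ one obtains $\overline{ade}=0$ and $\overline{bde}=\overline{cde}=1$; for $d,e\in I_\gamma$ one obtains $\overline{ade}=1$ and $\overline{bde}=\overline{cde}=0$; in both cases these values agree with the rule ``mixed $\mapsto 1$, monochromatic $\mapsto 0$''. The obstruction appears for the ``cross'' pairs $d\in I_\omega$, $e\in I_\gamma$, where \eqref{eq:rowdiff} only yields $\overline{ade}=\overline{bde}=\overline{cde}$ without fixing the common value; likewise, triples using two or three outer indices are invisible to the equation evaluated at $abc$ alone.

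To finish I would propagate the structure by sliding the base vertex, in the spirit of Lemma~\ref{lem:cViipr} and Corollary~\ref{cor:genVii}: from $abc$ move to a mixed neighbour (for instance $\{a,b,d\}$ with $d\in I_\omega$ or $d\in I_\gamma$), show that it is again in case \labelcref{case:V}(\labelcref{case:Vi}) with the \emph{same} parts $U,W$ (using the now-fixed quotient matrix and the exclusion of the other two cases for $n>14$), and read off its nb-array to determine the indicators of the triples involving one further outer index. Iterating along a connected sequence of mixed base vertices reaches every triple once $n$ is large, thereby forcing the cross value to $1$ and establishing $\overline{xyz}=1$ for all mixed triples and $\overline{xyz}=0$ for all monochromatic ones, so that $X$ is exactly $\Pi_1$ for the bipartition $\{U,W\}$. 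I expect this propagation to be the main obstacle: one must check that each mixed neighbour genuinely lands in case \labelcref{case:V}(\labelcref{case:Vi}) rather than \labelcref{case:V}(\labelcref{case:Vii}) or \labelcref{case:VI}(\labelcref{case:VIiii}), and that the chosen base vertices sweep out enough configurations to determine \emph{every} triple, including the monochromatic triples lying wholly inside $I_\omega$ or wholly inside $I_\gamma$, which are never adjacent to $abc$.
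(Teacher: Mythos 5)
Your proposal follows essentially the same route as the paper: fix the quotient matrix by forcing a mixed neighbour back into case \labelcref{case:V}(\labelcref{case:Vi}) (using the case-\labelcref{case:V}(\labelcref{case:Vii}) analysis to exclude $|I_\beta|>0$), read off the triples meeting $\{a,b,c\}$ in a pair from Equation \eqref{eq:rowdiff}, and then propagate by sliding the base vertex to determine all remaining triples, concluding that $X_1$ is exactly the set of mixed triples for the bipartition $\{U,W\}$. This is the paper's argument (its unnumbered lemma plus Corollary \ref{cor:genVi}), including the observation that the matching itself is immaterial.
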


First we characterise the possible quotient matrices in this case.

\begin{lemma}
	Let $abc$ be a vertex of $\Gamma$ with $\overline{abc}=1$ be such that we have nb-array
	$$
	\Gamma(abc):
	\underbrace{\begin{array}{cccc}
		1 & \dots & \dots & 1\\
		1 & \dots & \dots & 1\\
		1 & \dots & \dots & 1
		\end{array}}_{I_{\omega}}
	\underbrace{\begin{array}{cccc}
		1 & \dots & \dots & 1\\
		1 & \dots & \dots & 1\\
		0 & \dots & \dots & 0
		\end{array}}_{I_{\gamma}}
	\underbrace{\begin{array}{cccc}
		0 & \dots & \dots & 0\\
		0 & \dots & \dots & 0\\
		0 & \dots & \dots & 0
		\end{array}}_{I_{\beta}}
	\begin{array}{c}  
	~\leftarrow ab\text{-row} \\
	~\leftarrow ac\text{-row} \\
	~\leftarrow bc\text{-row} 
	\end{array}
	$$
	Then the quotient matrix for the partition is 
	\begin{equation*}
	\left(\begin{matrix}
	5n/2-7 & n/2-2\\
	3n/2 & 3n/2-9
	\end{matrix}\right)
	\end{equation*}
\end{lemma}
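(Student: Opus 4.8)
The plan is to reduce everything to determining $|I_\beta|$, since once the sizes of $I_\omega,I_\gamma,I_\beta$ are known the whole quotient matrix drops out of Lemma~\ref{lem:theta}. From the difference tuple $(0,(n-4)/2)$ of the case-\labelcref{case:Vi} array one reads off $|I_\gamma|=(n-4)/2$, and since the three index sets partition $[n]\setminus\{a,b,c\}$ we have $|I_\omega|+|I_\gamma|+|I_\beta|=n-3$, whence $|I_\omega|=(n-2)/2-|I_\beta|$. Counting the $1$'s in the displayed nb-array gives $p_{11}=\overline{\Gamma(abc)}=3|I_\omega|+2|I_\gamma|=5n/2-7-3|I_\beta|$, so the lemma is equivalent to the single claim $|I_\beta|=0$.

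First I would use Equation~\eqref{eq:rowdiff}, applied both to the pair $ab\ast,ac\ast$ and, after relabelling the anchor, to $ac\ast,bc\ast$, to evaluate the triples $\overline{ade},\overline{bde},\overline{cde}$ for a fixed $d\in I_\omega$ and $e$ ranging over the three index sets. The outcome is $\overline{ade}=0$ and $\overline{bde}=\overline{cde}=1$ for $e\in I_\omega\cup I_\beta$, and $\overline{ade}=\overline{bde}=\overline{cde}=:x_e$ for $e\in I_\gamma$. Reading these off, the nb-array of the $X_1$-vertex $abd$ (rows $ab,ad,bd$) has its $c$-column equal to $(1,1,1)$, its $I_\omega\setminus\{d\}$ columns equal to $(1,0,1)$, its $I_\gamma$ columns equal to $(1,x_e,x_e)$, and its $I_\beta$ columns equal to $(0,0,1)$. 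Equating $p_{11}$ for $abc$ and for $abd$ then forces $\sum_{e\in I_\gamma}x_e=|I_\gamma|-|I_\beta|$, so exactly $|I_\beta|$ of the $I_\gamma$-columns equal $(1,0,0)$ and the remaining $|I_\gamma|-|I_\beta|$ of them equal $(1,1,1)$.

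The crux is to convert this into a bound on $|I_\beta|$. Counting columns with exactly one entry equal to $1$ in the array of $abd$ (a count invariant under sorting the rows), we get $|I_\beta|$ such columns of type $(1,0,0)$ and a further $|I_\beta|$ of type $(0,0,1)$, that is $2|I_\beta|$ singleton columns, and no all-zero column. Since $n>14$, the only admissible nb-arrays are \labelcref{case:Vi}, \labelcref{case:Vii} and \labelcref{case:VIiii}, which contain $0$, $2$ and $3$ singleton columns respectively; hence $2|I_\beta|\le 3$, giving $|I_\beta|\le 1$. If $|I_\beta|=1$, then after sorting its two tied rows the array of $abd$ is precisely a case-\labelcref{case:Vii} array (with empty all-zero block), so the quotient-matrix lemma of Section~\ref{sec:caseVii} forces $p_{11}=5n/2-13$, contradicting $p_{11}=5n/2-7-3=5n/2-10$. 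Therefore $|I_\beta|=0$ and $p_{11}=5n/2-7$. Finally, Lemma~\ref{lem:theta} with $p_{11}+p_{22}=k+\lambda_2=4(n-4)$ and $p_{12}+p_{21}=2n-2$ yields $p_{22}=3n/2-9$, $p_{12}=n/2-2$ and $p_{21}=3n/2$, which is the asserted matrix. I expect the exclusion of $|I_\beta|=1$, together with the bookkeeping of the column patterns of $abd$, to be the main obstacle; the rest is routine substitution into \eqref{eq:rowdiff} and Lemma~\ref{lem:theta}.
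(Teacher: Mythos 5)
Your proof is correct and follows essentially the same route as the paper: compute the relevant triple values via Equation \eqref{eq:rowdiff}, inspect the nb-array of an adjacent $X_1$-vertex to force $|I_\beta|\le 1$, and eliminate $|I_\beta|=1$ by the clash between $p_{11}=5n/2-10$ and the value $5n/2-13$ forced by the case \labelcref{case:V}(\labelcref{case:Vii}) lemma of Section \ref{sec:caseVii}. The only (harmless) differences are that you work with $abd$ where the paper uses $acd$, and you bound $|I_\beta|$ by counting singleton columns against Table \ref{tab:posnba} rather than by observing directly that the auxiliary vertex lies in case \labelcref{case:V}.
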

\begin{proof}
	By using Equation \eqref{eq:rowdiff}, we see that 
	\begin{align*}
	&\overline{bde}=\overline{cde}=1, \overline{ade}=0  \qquad &(for\quad d,e\in I_{\omega}\cup I_{\beta})\\
	&\overline{bde}=\overline{cde}=0, \overline{ade}=1  \qquad &(for\quad d,e\in I_{\gamma})\\
	&\overline{ade}=\overline{bde}=\overline{cde} \qquad &(for\quad d\in I_{\omega}\cup I_{\beta},e\in I_{\gamma})
	\end{align*}
	
	Let $d \in I_{\omega}$ We have $\overline{acd}=1$ and the following nb-array.
	
	$$
	\Gamma(acd):
	\underbrace{\begin{array}{c}
		1\\
		1\\
		1
		\end{array}}_{b}
	\underbrace{\begin{array}{cccc}
		1 & \dots & \dots & 1\\
		1 & \dots & \dots & 1\\
		0 & \dots & \dots & 0
		\end{array}}_{I_{\omega}\setminus \{d\}}
	\underbrace{\begin{array}{cccc}
		0 & \dots & \dots & 0\\
		1 & \dots & \dots & 1\\
		0 & \dots & \dots & 0
		\end{array}}_{I_{\beta}}
	\underbrace{\begin{array}{cccc}
		1 & \dots & \dots & 1\\
		* & \dots & \dots & *\\
		* & \dots & \dots & *
		\end{array}}_{I_{\gamma}}
	\begin{array}{c}  
	~\leftarrow ac\text{-row} \\
	~\leftarrow cd\text{-row} \\
	~\leftarrow ad\text{-row} 
	\end{array}
	$$
	
	Note that if two $\ast$ lie in the same column, they take the same value. By using Equation \eqref{eq:rowdiff}, we also see that $acd$ is in case \labelcref{case:V}. Therefore $|I_{\beta}|\leq 1$. If $|I_{\beta}|=1$, we are in case \labelcref{case:V}(\labelcref{case:Vii}), so exactly one column from $I_{\gamma}$ is zero on the $ad$-row. Using $|I_{\gamma}|=(n-4)/2$, this means that $p_{11}=(n-4)/2+2(n-4)=5n/2-10$. But section \ref{sec:caseVii} shows that any partition in which there is a vertex in case \labelcref{case:V}(\labelcref{case:Vii}) must have $p_{11}=5n/2-13$, giving a contradiction.
	
	Therefore $I_{\beta}=\emptyset$ and $acd$ is in case \labelcref{case:V}(\labelcref{case:Vi}). We also have $p_{11}=5n/2-7$, and can deduce the values for the remaining entries of the quotient matrix from Lemma \ref{lem:theta}.
\end{proof}

Therefore we can assume we have the following.
$$
\Gamma(\hat{a}bc):
\underbrace{\begin{array}{cccc}
	1 & \dots & \dots & 1\\
	1 & \dots & \dots & 1\\
	1 & \dots & \dots & 1
	\end{array}}_{I_{\omega}}
\underbrace{\begin{array}{cccc}
	1 & \dots & \dots & 1\\
	1 & \dots & \dots & 1\\
	0 & \dots & \dots & 0
	\end{array}}_{I_{\gamma}}
\begin{array}{c}  
~\leftarrow \hat{a}b\text{-row} \\
~\leftarrow \hat{a}c\text{-row} \\
~\leftarrow bc\text{-row} 
\end{array}
$$
Note that we have been careful to pick $\hat{a}$ as the index in $\hat{a}bc$ such that $\overline{bc\ast}$ is minimum. For the moment, we will let 
\begin{eqnarray*}
	W&=&I_{\omega}\cup \{\hat{a}\},\\U&=&I_{\gamma}\cup \{b,c\},
\end{eqnarray*}
and $M$ be any matching of $U,W$. For $d\in U$ we denote by $\hat{d}\in W$, its matched element.

\begin{lemma}
	We have the following.	
	\begin{enumerate}
		\item For $d\in I_{\gamma}\cup \{c\}$ we have $\overline{\hat{a}bd}=1$ and nb-array
		$$
		\Gamma(\hat{a}bd):
		\underbrace{\begin{array}{cccc}
			1 & \dots & \dots & 1\\
			1 & \dots & \dots & 1\\
			1 & \dots & \dots & 1
			\end{array}}_{I_\omega}
		\underbrace{\begin{array}{cccc}
			1 & \dots & \dots & 1\\
			1 & \dots & \dots & 1\\
			0 & \dots & \dots & 0
			\end{array}}_{I_{\gamma}\cup \{c\}\setminus \{d\}}
		\begin{array}{c}  
		~\leftarrow \hat{a}b\text{-row} \\
		~\leftarrow \hat{a}d\text{-row} \\
		~\leftarrow bd\text{-row} 
		\end{array}
		$$
		\item For $d\in I_{\gamma}\cup \{c\}$ we have $\overline{\hat{a}b\hat{d}}=1$ and nb-array
		$$
		\Gamma(\hat{a}b\hat{d}):
		\underbrace{\begin{array}{cccc}
			1 & \dots & \dots & 1\\
			1 & \dots & \dots & 1\\
			0 & \dots & \dots & 0
			\end{array}}_{I_\omega\setminus \{\hat{d}\}}
		\underbrace{\begin{array}{cccc}
			1 & \dots & \dots & 1\\
			1 & \dots & \dots & 1\\
			1 & \dots & \dots & 1
			\end{array}}_{I_{\gamma}\cup \{c\}}
		\begin{array}{c}  
		~\leftarrow \hat{a}b\text{-row} \\
		~\leftarrow b\hat{d}\text{-row} \\
		~\leftarrow \hat{b}\hat{d}\text{-row} 
		\end{array}
		$$
	\end{enumerate}	
\end{lemma}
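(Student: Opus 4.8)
The plan is to prove both parts by exactly the local propagation used in the proof of the preceding lemma: combine the relations among the values $\overline{\hat{a}de},\overline{bde},\overline{cde}$ that follow from Equation~\eqref{eq:rowdiff} applied to $\hat{a}bc$ with a single global count that uses the already-determined value $p_{11}=5n/2-7$. Throughout I would exploit that $\overline{\hat{a}b\ast}=n-2$ is the largest possible clique-sum, so that in every neighbourhood considered the $\hat{a}b$-row is forced to be identically $1$.

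For part 1, fix $d\in I_{\gamma}\cup\{c\}$; the case $d=c$ is just the given nb-array of $\hat{a}bc$, so assume $d\in I_{\gamma}$. That $\overline{\hat{a}bd}=1$ is immediate from the $d$-column of $\Gamma(\hat{a}bc)$. Since $\overline{\hat{a}b\ast}=n-2$ and every triple $\hat{a}by$ with $y\neq\hat{a},b$ already lies in $X_1$, the $\hat{a}b$-row of $\Gamma(\hat{a}bd)$ is identically $1$. The relations recorded in the proof of the preceding lemma give $\overline{\hat{a}dx}=1$ and $\overline{bdx}=0$ for $x\in(I_{\gamma}\cup\{c\})\setminus\{d\}$, whereas for $x\in I_{\omega}$ they only yield the common value $v_x:=\overline{\hat{a}dx}=\overline{bdx}=\overline{cdx}\in\{0,1\}$. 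I would then pin these down by counting: summing the three rows of $\Gamma(\hat{a}bd)$ gives $\overline{\Gamma(\hat{a}bd)}=(n-3)+|I_{\gamma}|+2\sum_{x\in I_{\omega}}v_x$, and equating this with $p_{11}=5n/2-7$ forces $\sum_{x\in I_{\omega}}v_x=|I_{\omega}|$, hence $v_x=1$ for every $x\in I_{\omega}$. This gives precisely the claimed nb-array.

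Part 2 is the mirror-image argument applied to the triple $\hat{a}b\hat{d}$ with $\hat{d}\in I_{\omega}$ (the element of $W$ matched to $d$; the third row should be read as the $\hat{a}\hat{d}$-row). Again $\overline{\hat{a}b\hat{d}}=1$ comes from the $\hat{d}$-column of $\Gamma(\hat{a}bc)$, and the $\hat{a}b$-row is all $1$ as before. The relations of the preceding lemma give $\overline{\hat{a}\hat{d}x}=0$, $\overline{b\hat{d}x}=1$ for $x\in I_{\omega}\setminus\{\hat{d}\}$ and a common value $w_x:=\overline{\hat{a}\hat{d}x}=\overline{b\hat{d}x}=\overline{c\hat{d}x}$ for $x\in I_{\gamma}$, while the $c$-column entries are read off the $\hat{d}$-column of $\Gamma(\hat{a}bc)$. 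The same count, now giving $\overline{\Gamma(\hat{a}b\hat{d})}=(3n-6)/2+2\sum_{x\in I_{\gamma}}w_x$, together with $p_{11}=5n/2-7$, forces $w_x=1$ for all $x\in I_{\gamma}$, producing the stated nb-array.

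The only genuine subtlety, and the step I expect to need the most care, is the bookkeeping of the matching. Because the partition $\Pi_1$ does not depend on the matching $M$ at all, part 2 is really a statement about an \emph{arbitrary} $\hat{d}\in I_{\omega}$; to make the displayed row labels literally correct one takes $M$ to pair $I_{\gamma}\cup\{c\}$ with $I_{\omega}$ (so that $b$ is paired with $\hat{a}$, i.e.\ $\hat{b}=\hat{a}$), and the preceding count shows such a pairing is consistent. Everything else is routine: the cross-relations are already available from Equation~\eqref{eq:rowdiff}, the $\hat{a}b$-row is forced by $\overline{\hat{a}b\ast}=n-2$, and the fact that the $v_x$ and $w_x$ are $\{0,1\}$-valued turns each linear count into an all-or-nothing conclusion.
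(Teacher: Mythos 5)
Your proof is correct and takes essentially the same route as the paper: the paper's own proof consists of exactly the three relations you quote (obtained by applying Equation \eqref{eq:rowdiff} to $\Gamma(\hat{a}bc)$) followed by ``the result follows immediately''. Your explicit count of $\overline{\Gamma(\hat{a}bd)}$ and $\overline{\Gamma(\hat{a}b\hat{d})}$ against $p_{11}=5n/2-7$, forcing the undetermined common values $v_x,w_x$ to equal $1$, is a valid filling-in of that final step, and your reading of the matching and of the $\hat{b}\hat{d}$-row label in part~2 is consistent with the paper's conventions.
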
 
\begin{proof}
	By applying Equation \eqref{eq:rowdiff}, we see the following.
	\begin{align*}
	&\overline{bde}=\overline{cde}=1, \overline{\hat{a}de}=0  \qquad &(for\quad d,e\in I_{\omega})\\
	&\overline{bde}=\overline{cde}=0, \overline{\hat{a}de}=1  \qquad &(for\quad d,e\in I_{\gamma})\\
	&\overline{\hat{a}de}=\overline{bde}=\overline{cde} \qquad &(for\quad d\in I_{\omega},e\in I_{\gamma})
	\end{align*}
	The result follows immediately.
\end{proof}

\begin{cor}\label{cor:genVi}
	For all distinct $d,e,f\in U$, we have $\overline{\hat{d}ef}=1,\overline{\hat{d}e\hat{f}}=1$, and the following nb-arrays:
	\begin{enumerate}
		\item $$\Gamma(\hat{d}ef):
		\underbrace{\begin{array}{cccc}
			1 & \dots & \dots & 1\\
			1 & \dots & \dots & 1\\
			1 & \dots & \dots & 1
			\end{array}}_{W\setminus \{\hat{d}\}}
		\underbrace{\begin{array}{cccc}
			1 & \dots & \dots & 1\\
			1 & \dots & \dots & 1\\
			0 & \dots & \dots & 0
			\end{array}}_{U\setminus \{e,f\}}
		\begin{array}{c}  
		~\leftarrow \hat{d}e\text{-row} \\
		~\leftarrow \hat{d}f\text{-row} \\
		~\leftarrow ef\text{-row} 
		\end{array}$$
		\item  $$
		\Gamma(\hat{d}e\hat{f}):
		\underbrace{\begin{array}{cccc}
			1 & \dots & \dots & 1\\
			1 & \dots & \dots & 1\\
			0 & \dots & \dots & 0
			\end{array}}_{W\setminus \{\hat{d},\hat{f}\}}
		\underbrace{\begin{array}{cccc}
			1 & \dots & \dots & 1\\
			1 & \dots & \dots & 1\\
			1 & \dots & \dots & 1
			\end{array}}_{U\setminus \{e\}}
		\begin{array}{c}  
		~\leftarrow \hat{d}e\text{-row} \\
		~\leftarrow e\hat{f}\text{-row} \\
		~\leftarrow \hat{d}\hat{f}\text{-row} 
		\end{array}	$$	
	\end{enumerate}
\end{cor}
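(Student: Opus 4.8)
The plan is to obtain both families of nb-arrays by iterating the preceding Lemma, exactly as Corollary~\ref{cor:genVii} is derived from Lemma~\ref{lem:cViipr} in the analysis of case \labelcref{case:V}(\labelcref{case:Vii}). The vertices to be controlled are precisely those triples meeting $W$ in one element and $U$ in two (the shape $\hat{d}ef$), together with those meeting $W$ in two elements and $U$ in one (the shape $\hat{d}e\hat{f}$), subject always to the constraint that no present element of $W$ is the $M$-partner of a present element of $U$. I will call these the \emph{admissible} vertices. The base vertex $\hat{a}bc$ is admissible — it is $\hat{d}ef$ with $d$ the partner of $\hat{a}$ and $e=b$, $f=c$ — and its nb-array is the given one.

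First I would record the inductive step furnished by the preceding Lemma. Part~1 says that replacing one of the two $U$-coordinates of an admissible vertex of shape $\hat{d}ef$ by a different element of $U$ yields another admissible vertex of the same shape lying in $X_1$, with the predicted nb-array; Part~2 says that replacing a $U$-coordinate by its $M$-partner in $W$ produces an admissible vertex of shape $\hat{d}e\hat{f}$ in $X_1$ with its predicted nb-array. Because Equation~\eqref{eq:rowdiff}, and hence the whole nb-array analysis, is symmetric under permuting the three elements of a triple, these conclusions hold with any coordinate in the distinguished position; reading the moves backwards also lets a $W$-coordinate of a shape-$\hat{d}e\hat{f}$ vertex be exchanged for another $W$-element, or be converted back to its partner in $U$. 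Thus a single admissible vertex, once its nb-array is known, pins down the membership and nb-array of every admissible vertex obtained from it by a single legal coordinate change.

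Next I would verify that any target $\hat{d}ef$ or $\hat{d}e\hat{f}$ is reachable from $\hat{a}bc$ through a chain of admissible vertices related by such single moves. Using Part~1 in both $U$-positions one brings the two $U$-coordinates to their desired values; to exchange the distinguished element $\hat{a}$ of $W$ for an arbitrary $\hat{d}$ one routes through a shape-$\hat{d}e\hat{f}$ vertex, applying Part~2 to turn a $U$-coordinate into a second $W$-element, then the symmetric form of Part~1 to swap $\hat{a}$ for $\hat{d}$, and finally the reverse of Part~2 to restore the $U$-coordinate. Throughout one only has to ensure that the $M$-partner of a hatted element is never simultaneously present as an unhatted coordinate, which is possible because $d,e,f$ are distinct and $\hat{a}$'s partner is absent from the base. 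Induction along the chain, with the Lemma supplying the nb-array at each step, then yields the stated nb-arrays for both $\hat{d}ef$ and $\hat{d}e\hat{f}$.

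The main obstacle is the bookkeeping behind this iteration rather than any new identity: one must check that every elementary move between admissible vertices is genuinely an instance of (a row-permuted) Part~1 or Part~2, that each intermediate vertex respects the matching constraint and so stays admissible, and that the column counts $|I_\omega|=(n-2)/2$ and $|I_\gamma|=(n-4)/2$ remain large enough for the Lemma's hypotheses at every step. Since we assume $n>14$, far beyond the $n>8$ needed for Corollary~\ref{cor:genVii}, this last point causes no trouble, and the corollary follows just as in case \labelcref{case:V}(\labelcref{case:Vii}).
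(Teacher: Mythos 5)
Your proposal is correct and follows essentially the same route as the paper: the paper's proof of Corollary~\ref{cor:genVi} is simply a ``repeated application of the previous lemma'', applied consecutively starting from $\hat{a}bc$, which is exactly the chain of single-coordinate moves you describe (using the symmetry of Equation~\eqref{eq:rowdiff} to relabel which coordinate is distinguished at each step). The only superfluous element is the partner-avoidance bookkeeping: in case \labelcref{case:V}(\labelcref{case:Vi}) the matching $M$ of $U$ with $W$ is arbitrary and the nb-arrays treat the partner $d$ of $\hat{d}$ like any other element of $U\setminus\{e,f\}$, so no such constraint needs to be tracked here.
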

\begin{proof}
	This follows from the repeated application of the previous lemma. If $n>8$, we can we can always apply the lemma consecutively to $\hat{a}bc$ to attain the nb-array of $\hat{d}ef,\hat{d}e\hat{f}$.
\end{proof}

\begin{proof}[Proof of Theorem \ref{thm:Vi}]
	Any vertex not considered directly in Corrolary \ref{cor:genVi} is of the form $def,d\hat{d}e,\hat{d}\hat{e}\hat{f}$ or $d\hat{d}\hat{f}$ for distinct elements $d,e,f\in U$. The first two appear in the nb-array of $\hat{d}ef$, and the last two appear in the nb-array of $\hat{d}e\hat{f}$. From this we can see a vertex is in $X_1$ if and only if the vertex consists of 2 elements of $U$ and one of $W$, or vice-versa. This gives the partition $\Pi_1$ defined by the matching $M$, presented in Section \ref{sec:knJn3}.  
\end{proof}

\subsection{Case \labelcref{case:VI}(\labelcref{case:VIiii})} 

\begin{thm}\label{thm:VIiii}
	Let $abc$ be a vertex of $\Gamma$ with $\overline{abc}=1$ be such that we have nb-array as in \labelcref{case:VI}(\labelcref{case:VIiii}).
	
	Then the partition $X$ can be constructed from a matching on the elements of $\left[n\right]$, which corresponds to an instance of a partition $\Pi_2$.
\end{thm}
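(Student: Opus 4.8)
The plan is to follow the template of the proofs of Theorems~\ref{thm:Vii} and~\ref{thm:Vi}: first determine the quotient matrix, then read off a perfect matching of $\left[n\right]$ from the local data, and finally verify that $X$ is the instance of $\Pi_2$ attached to that matching.

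First I would prove a quotient-matrix lemma. Starting from the case~\labelcref{case:VI}(\labelcref{case:VIiii}) nb-array of $abc$, with a possibly non-empty set $I_{\beta}$, I would examine the neighbour $abd$, where $d$ is the index of the $(1,0,0)$-column. Applying Equation~\eqref{eq:rowdiff} to the pairs of columns $\{d,x\}$ shows that in the nb-array of $abd$ the three single-$1$ columns survive (the old $d$-column being replaced by the $c$-column, which equals $(1,0,0)$), every $I_{\omega}$-column of $abc$ remains $(1,1,1)$, and every $I_{\beta}$-column of $abc$ becomes a two-$1$ column $(0,1,1)$. Counting ones, $\overline{\Gamma(abd)}=3+3|I_{\omega}|+2|I_{\beta}|$, whereas $\overline{\Gamma(abc)}=3+3|I_{\omega}|$; since $abc,abd\in X_1$ share the value $p_{11}$, this forces $I_{\beta}=\emptyset$. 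Then $p_{11}=3(n-5)$, and Lemma~\ref{lem:theta} yields the quotient matrix
\begin{equation*}
\left(\begin{matrix}
3n-15 & 6\\
2n-8 & n-1
\end{matrix}\right).
\end{equation*}

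With $I_{\beta}=\emptyset$, the three single-$1$ columns of $abc$ identify partners: the $(1,0,0)$-, $(0,1,0)$- and $(0,0,1)$-columns have their two zero entries in the two rows sharing the common element $c$, $b$ and $a$, respectively, so I set $\hat{c}$, $\hat{b}$, $\hat{a}$ to be their indices. The defining feature of a matched pair $\{x,\hat{x}\}$ will be that every triple containing it lies in $X_2$, i.e.\ $\overline{x\hat{x}\ast}=0$. Next I would prove a propagation lemma (the analogue of Lemma~\ref{lem:cViipr}): recomputing via Equation~\eqref{eq:rowdiff}, the neighbour $ab\hat{c}$ is again in case~\labelcref{case:VI}(\labelcref{case:VIiii}), with single-$1$ columns now indexed by $c$, $\hat{b}$, $\hat{a}$, so the partner map is preserved under this local move. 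Iterating as in Corollary~\ref{cor:genVii} (which is legitimate since $n>8$ forces the single-$1$ columns at each step) shows that every $abc\in X_1$ has an nb-array in case~\labelcref{case:VI}(\labelcref{case:VIiii}) whose single-$1$ columns are indexed by $\hat{a},\hat{b},\hat{c}$, and that $x\mapsto\hat{x}$ is a fixed-point-free involution; that is, $M=\{\{x,\hat{x}\}:x\in\left[n\right]\}$ is a perfect matching of $\left[n\right]$ (in particular $n$ is even).

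Finally I would assemble the global description: a triple $\{x,y,z\}$ satisfies $\overline{xyz}=0$ exactly when it contains a matched edge of $M$, and $\overline{xyz}=1$ otherwise, which is precisely the instance of $\Pi_2$ determined by $M$ in Section~\ref{sec:knJn3}. The main obstacle is the global consistency of the matching: one must show that the partner $\hat{x}$ does not depend on which value-$1$ triple through $x$ is used to define it, and that the local moves iterate to cover all of $\left[n\right]$; as in cases~\labelcref{case:V}(\labelcref{case:Vii}) and~\labelcref{case:V}(\labelcref{case:Vi}) this is exactly the role of the propagation lemma and its corollary. The point particular to this case, and the only genuinely new calculation, is ruling out the two-$1$ columns produced by a non-empty $I_{\beta}$, which is what the quotient-matrix lemma disposes of at the outset.
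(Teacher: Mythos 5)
Your proposal is correct and follows essentially the same route as the paper: the same $I_{\beta}=\emptyset$ quotient-matrix computation (you count the whole neighbourhood of $abd$ where the paper compares the $ab$- and $ad$-rows, which is equivalent), the same partner map read off the three single-$1$ columns, and the same propagation-plus-corollary scheme to globalise the matching into an instance of $\Pi_2$. The only step you leave implicit is the move to $abd$ for $d\in I_{\omega}$, where the count $p_{11}=3(n-5)$ forces exactly one further single-$1$ column in $\Gamma(abd)$ and hence a unique partner $\hat{d}$ — this is precisely how the paper manufactures the matching on $I_{\omega}$ and establishes the well-definedness you correctly flag as the main obstacle.
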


First we characterise the possible quotient matrices in this case.

\begin{lemma}
	Let $abc$ be a vertex of $\Gamma$ with $\overline{abc}=1$ be such that we have nb-array
	$$
	\Gamma(abc):
	\underbrace{\begin{array}{c}
		1\\
		0\\
		0
		\end{array}}_{d}
	\underbrace{\begin{array}{c}
		0\\
		1\\
		0
		\end{array}}_{e}
	\underbrace{\begin{array}{c}
		0\\
		0\\
		1
		\end{array}}_{f}
	\underbrace{\begin{array}{cccc}
		1 & \dots & \dots & 1\\
		1 & \dots & \dots & 1\\
		1 & \dots & \dots & 1
		\end{array}}_{I_{\omega}}
	\underbrace{\begin{array}{cccc}
		0 & \dots & \dots & 0\\
		0 & \dots & \dots & 0\\
		0 & \dots & \dots & 0
		\end{array}}_{I_{\beta}}
	\begin{array}{c}  
	~\leftarrow ab\text{-row} \\
	~\leftarrow ac\text{-row} \\
	~\leftarrow bc\text{-row} 
	\end{array}
	$$
	Then the quotient matrix for the partition is 
	\begin{equation*}
	\left(\begin{matrix}
	3(n-5) & 6\\
	2(n-4) & n-1
	\end{matrix}\right)
	\end{equation*}
\end{lemma}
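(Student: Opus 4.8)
The plan is to exploit that $p_{11}=\overline{\Gamma(u)}$ takes the same value for every $u\in X_1$, and to recount this value at a neighbour of $abc$. Reading off the given nb-array, each of the columns $d,e,f$ contributes a single $1$, each column of $I_\omega$ contributes $3$, and each column of $I_\beta$ contributes $0$, so $p_{11}=\overline{\Gamma(abc)}=3+3|I_\omega|$, where $|I_\omega|+|I_\beta|=n-6$. Since the $d$-column has $\overline{abd}=1$, the vertex $abd$ also lies in $X_1$, whence $\overline{\Gamma(abd)}=p_{11}$. The whole argument reduces to determining the nb-array of $abd$ and comparing the two counts.

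To compute $\Gamma(abd)$ I would apply Equation \eqref{eq:rowdiff} at $abc$ to each pair of columns involving $d$, using the difference tuple $(0,0)$, i.e. $\overline{ab\ast}=\overline{ac\ast}=\overline{bc\ast}$. For a bulk index $g\in I_\omega\cup I_\beta$ one has $\overline{abg}=\overline{acg}=\overline{bcg}$, so pairing $d$ with $g$ on the rows $ab,ac$ gives $1+\overline{cdg}-\overline{bdg}=0$, forcing $\overline{bdg}=1,\overline{cdg}=0$; pairing $d$ with $g$ on the rows $ab,bc$ then yields $\overline{adg}=1$. For the special index $e$ (with $\overline{abe}=0,\overline{ace}=1,\overline{bce}=0$) the same two applications give $\overline{ade}=1$ and $\overline{bde}=\overline{cde}=0$, and symmetrically for $f$ one obtains $\overline{bdf}=1$ and $\overline{adf}=\overline{cdf}=0$. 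Finally the $c$-column of $\Gamma(abd)$ is $(\overline{abc},\overline{acd},\overline{bcd})=(1,0,0)$, read directly off the $d$-column of $\Gamma(abc)$.

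Assembling these, the columns of $\Gamma(abd)$ (in the rows $ab,ad,bd$) are $(1,0,0)$ at $c$, $(0,1,0)$ at $e$, $(0,0,1)$ at $f$, $(1,1,1)$ at each index of $I_\omega$, and $(0,1,1)$ at each index of $I_\beta$. Hence $\overline{\Gamma(abd)}=3+3|I_\omega|+2|I_\beta|$, and comparing with $\overline{\Gamma(abc)}=3+3|I_\omega|$ forces $|I_\beta|=0$. Thus $|I_\omega|=n-6$ and $p_{11}=3+3(n-6)=3(n-5)$. The remaining entries follow from Lemma \ref{lem:theta}: since $p_{11}+p_{22}=k+\lambda_2=4(n-4)$ we get $p_{22}=n-1$, and from the row sums $p_{11}+p_{12}=p_{21}+p_{22}=k=3(n-3)$ we obtain $p_{12}=6$ and $p_{21}=2(n-4)$, which is the claimed quotient matrix.

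I expect the only delicate part to be the bookkeeping in the second paragraph: one must pin down every triple $\overline{adi},\overline{bdi},\overline{cdi}$ and check that the two applications of \eqref{eq:rowdiff} (to the row-pairs $ab,ac$ and $ab,bc$) are mutually consistent. Conceptually there is no obstacle once one notices that the $I_\beta$-columns, which were all-zero in $\Gamma(abc)$, become $(0,1,1)$ in $\Gamma(abd)$ and so inject a surplus of $2|I_\beta|$ into the neighbour count; equating the two values of $p_{11}$ immediately annihilates this surplus. (Note that this argument does not even use the reduction to three cases available for $n>14$, so the lemma holds whenever this nb-array occurs.)
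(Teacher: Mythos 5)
Your proposal is correct and follows essentially the same route as the paper: both use Equation \eqref{eq:rowdiff} to pin down all values $\overline{adg},\overline{bdg},\overline{cdg}$, assemble the nb-array of $abd$ (whose $I_\beta$-columns become $(0,1,1)$), and then force $|I_\beta|=0$ by a counting comparison, finishing with Lemma \ref{lem:theta}. The only cosmetic difference is that you equate $\overline{\Gamma(abc)}$ with $\overline{\Gamma(abd)}$ where the paper equates $\overline{ab\ast}$ with $\overline{ad\ast}$; these are equivalent given the derived array.
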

\begin{proof}
	Using Equation \eqref{eq:rowdiff}, we deduce the following.
	\begin{align*}
	&\overline{ade}=1,\overline{bde}=\overline{cde}=0 \\
	&\overline{bdf}=1,\overline{adf}=\overline{cdf}=0\\
	&\overline{adg}=\overline{bdg}=1, \overline{cdg}=0 \qquad &(for \quad g\in I_{\omega}\cup I_{\beta})
	\end{align*}
	
	Now consider $abd$. We have $\overline{abd}=1$, and the following nb-array.
	
	$$
	\Gamma(abd):
	\underbrace{\begin{array}{c}
		1\\
		0\\
		0
		\end{array}}_{c}
	\underbrace{\begin{array}{c}
		0\\
		1\\
		0
		\end{array}}_{e}
	\underbrace{\begin{array}{c}
		0\\
		0\\
		1
		\end{array}}_{f}
	\underbrace{\begin{array}{cccc}
		1 & \dots & \dots & 1\\
		1 & \dots & \dots & 1\\
		1 & \dots & \dots & 1
		\end{array}}_{I_{\omega}\setminus \{c\}}
	\underbrace{\begin{array}{cccc}
		0 & \dots & \dots & 0\\
		1 & \dots & \dots & 1\\
		1 & \dots & \dots & 1
		\end{array}}_{I_{\beta}}
	\begin{array}{c}  
	~\leftarrow ab\text{-row} \\
	~\leftarrow ad\text{-row} \\
	~\leftarrow bd\text{-row} 
	\end{array}
	$$
	
	By using Equation \eqref{eq:rowdiff}, we see that 
	
	$$\overline{ab\ast}-\overline{ad\ast}=\frac{n-4}{2}(\overline{abc}+\overline{abe}+\overline{cde}-\overline{acd}-\overline{ade}-\overline{bce})=0$$.
	
	Therefore $I_{\beta}=\emptyset$ and $p_{11}=3(n-5)$. We can then use Lemma \ref{lem:theta} to deduce the remaining entries of the quotient matrix.
\end{proof}

Therefore, we can assume we have the following nb-array.
$$
\Gamma(\hat{a}bc):
\underbrace{\begin{array}{c}
	1\\
	0\\
	0
	\end{array}}_{a}
\underbrace{\begin{array}{c}
	0\\
	1\\
	0
	\end{array}}_{\hat{b}}
\underbrace{\begin{array}{c}
	0\\
	0\\
	1
	\end{array}}_{\hat{c}}
\underbrace{\begin{array}{cccc}
	1 & \dots & \dots & 1\\
	1 & \dots & \dots & 1\\
	1 & \dots & \dots & 1
	\end{array}}_{I_{\omega}}
\begin{array}{c}  
~\leftarrow \hat{a}b\text{-row} \\
~\leftarrow \hat{a}c\text{-row} \\
~\leftarrow bc\text{-row} 
\end{array}
$$
Note that we have been careful to denote by $\hat{b}$ as the index not in $I_{\omega}$ of the column such that $\hat{a}c\hat{b}=1$, and similarly for $a$ and $\hat{c}$.

Using Equation \eqref{eq:rowdiff}, we have the following.
\begin{align*}
&\overline{abd}=\overline{acd}=1,\overline{\hat{a}ad}=0 \qquad &(for \quad d\in I_{\omega})\\
&\overline{\hat{a}\hat{b}d}=\overline{c\hat{b}d}=1,\overline{b\hat{b}d}=0 \qquad &(for \quad d\in I_{\omega})\\
&\overline{\hat{a}\hat{c}d}=\overline{b\hat{c}d}=1,\overline{c\hat{c}d}=0 \qquad &(for \quad d\in I_{\omega})\\
&\overline{ade}=\overline{bde}=\overline{cde} \qquad &(for \quad d,e\in I_{\omega}) 
\end{align*}

Now fix $d\in I_{\omega}$. We have $\overline{\hat{a}bd}=1$ and the following nb-array.

$$
\Gamma(\hat{a}bd):
\underbrace{\begin{array}{c}
	0\\
	0\\
	1
	\end{array}}_{a}
\underbrace{\begin{array}{c}
	0\\
	1\\
	0
	\end{array}}_{\hat{b}}
\underbrace{\begin{array}{c}
	1\\
	1\\
	1
	\end{array}}_{\hat{c}}
\underbrace{\begin{array}{c}
	1\\
	1\\
	1
	\end{array}}_{c}
\underbrace{\begin{array}{cccc}
	1 & \dots & \dots & 1\\
	* & \dots & \dots & *\\
	* & \dots & \dots & *
	\end{array}}_{I_{\omega}\setminus \{d\}}
\begin{array}{c}  
~\leftarrow \hat{a}b\text{-row} \\
~\leftarrow \hat{a}d\text{-row} \\
~\leftarrow bd\text{-row} 
\end{array}
$$

Note that any two $\ast$ in the same column must be of the same value. As $p_{11}=3(n-5)$, there must be a unique $\hat{d}\in I_{\omega}$ such that $\overline{\hat{a}d\hat{d}}=0=\overline{bd\hat{d}}$. If there were another $e\in I_{\omega}\setminus \{d\}$ such that $\overline{\hat{a}e\hat{d}}=0=\overline{be\hat{d}}$, then $\overline{\hat{a}b\hat{d}}=1$, and for which the $d$ and $e$ columns are equal and contain exactly 1 row each,  giving a contradiction. Therefore we have a perfect matching of the set $I_{\omega}$, and a matching of $\left[n\right]$ given by 
\begin{equation*}
M=\{(a,\hat{a}),(b,\hat{b}),(c,\hat{c})\}\cup\{(d,\hat{d}):d\in I_{\omega}\}.
\end{equation*}
Without loss of generality, we choose a specific partition of $\left[n\right]$ into parts $U,W$, such that $M$ is a matching of $U$ with $W$ as follows.
\begin{align*}
U&=\{d:(d,\hat{d})\in M\},\\
W&=\{\hat{d}:(d,\hat{d})\in M\}.
\end{align*}

\begin{lemma}\label{lem:cVIiiipr}
	We have the following.	
	\begin{enumerate}
		\item For $d\in I_{\gamma}\cup \{c\}$ we have $\overline{\hat{a}bd}=1$ and nb-array	
		$$\Gamma(\hat{a}bd):
		\underbrace{\begin{array}{c}
			0\\
			0\\
			1
			\end{array}}_{a}
		\underbrace{\begin{array}{c}
			0\\
			1\\
			0
			\end{array}}_{\hat{b}}
		\underbrace{\begin{array}{c}
			1\\
			0\\
			0
			\end{array}}_{\hat{d}}
		\underbrace{\begin{array}{cccc}
			1 & \dots & \dots & 1\\
			1 & \dots & \dots & 1\\
			1 & \dots & \dots & 1
			\end{array}}_{I_{\omega}\setminus \{d,\hat{d}\}}
		\begin{array}{c}  
		~\leftarrow \hat{a}b\text{-row} \\
		~\leftarrow \hat{a}d\text{-row} \\
		~\leftarrow bd\text{-row} 
		\end{array}$$
		\item For $d\in I_{\gamma}\cup \{c\}$ we have $\overline{\hat{a}b\hat{d}}=1$ and nb-array
		$$\Gamma(\hat{a}b\hat{d}):
		\underbrace{\begin{array}{c}
			0\\
			0\\
			1
			\end{array}}_{\hat{b}}
		\underbrace{\begin{array}{c}
			0\\
			1\\
			0
			\end{array}}_{a}
		\underbrace{\begin{array}{c}
			1\\
			0\\
			0
			\end{array}}_{\hat{d}}
		\underbrace{\begin{array}{cccc}
			1 & \dots & \dots & 1\\
			1 & \dots & \dots & 1\\
			1 & \dots & \dots & 1
			\end{array}}_{I_{\omega}\setminus \{d,\hat{d}\}}
		\begin{array}{c}  
		~\leftarrow \hat{a}b\text{-row} \\
		~\leftarrow b\hat{d}\text{-row} \\
		~\leftarrow \hat{a}\hat{d}\text{-row} 
		\end{array}$$
	\end{enumerate}
\end{lemma}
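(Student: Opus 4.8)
The plan is to establish both nb-arrays in exactly the way the nb-array of $\hat{a}bc$ and the matching $M$ were obtained: by propagating the six families of identities already derived from Equation \eqref{eq:rowdiff}, together with the defining uniqueness property of $M$. This is the Case \labelcref{case:VI}(\labelcref{case:VIiii}) analogue of Lemma \ref{lem:cViipr}, so I would handle the two parts in parallel, since they differ only by a relabelling. The single clean anchor throughout is that any vertex of the form $\hat{a}b\ast$ shares the pair $\hat{a}b$ with $\hat{a}bc$, so its $\hat{a}b$-row is inherited verbatim: the value $\overline{\hat{a}bx}$ depends only on the pair $\hat{a}b$ and the index $x$, not on the third point of the vertex.

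For part 1, fix the given $d\in U\setminus\{a,b\}$. The vertex $\hat{a}bd$ has the pair $\hat{a}b$, so its top row is the $\hat{a}b$-row of $\Gamma(\hat{a}bc)$, which fixes one of the three rows at once. The two remaining rows, $\hat{a}d$ and $bd$, are then filled column by column from the standing identities $\overline{abd}=\overline{acd}=1$, $\overline{\hat{a}ad}=0$, $\overline{\hat{a}\hat{b}d}=\overline{c\hat{b}d}=1$, $\overline{b\hat{b}d}=0$, $\overline{\hat{a}\hat{c}d}=\overline{b\hat{c}d}=1$, $\overline{c\hat{c}d}=0$ and $\overline{ade}=\overline{bde}=\overline{cde}$, by locating, for each column index $x$, the family containing the triple $\{\hat{a},d,x\}$ or $\{b,d,x\}$. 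The single special column headed by the matched index $\hat{d}$ is supplied by the defining property of $M$, namely that $\hat{d}$ is the unique element with $\overline{\hat{a}d\hat{d}}=\overline{bd\hat{d}}=0$; this uniqueness is precisely what forbids a second such column and therefore rules out any nb-array other than the claimed one.

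For part 2 the argument is the mirror image: the vertex $\hat{a}b\hat{d}$ still contains the pair $\hat{a}b$, so its top row is inherited as before, while $b\hat{d}$ and $\hat{a}\hat{d}$ become the two new rows (reflected in the row labels of the displayed array) and the column heads $a$ and $\hat{b}$ exchange their special roles. One again applies Equation \eqref{eq:rowdiff} to the row pairs through $\hat{a}b$ to transport the known values at $\hat{a},b,\hat{d},d,c,\hat{c}$, and invokes the uniqueness in $M$ to force the only nonuniform columns to be the three displayed. Wherever an alternative column with one or two $1$'s could a priori appear, it is excluded exactly as in the matching step: such a pattern would drive $\hat{a}b\hat{d}$ (or an auxiliary vertex built from it) into one of the impossible Cases \labelcref{case:I,case:II,case:III,case:IV}, contradicting the standing assumption $n>14$.

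The main obstacle is purely the bookkeeping: keeping straight which of the six identity families, plus the two inherited top-row values, determines each entry under the relabellings $c\mapsto d$, $\hat{c}\mapsto\hat{d}$ and $a\leftrightarrow\hat{b}$, and checking that no column is left ambiguous. The fast internal consistency check I would run at every step is that the $\hat{a}b$-row of $\Gamma(\hat{a}bd)$ and of $\Gamma(\hat{a}b\hat{d})$ must coincide, entry for entry, with the $\hat{a}b$-row of $\Gamma(\hat{a}bc)$, since $\overline{\hat{a}bx}$ is intrinsic to the pair $\hat{a}b$; this pins down every top-row entry and immediately localises any labelling slip. Once all entries are verified, both displayed nb-arrays follow, and the lemma then serves as the base step from which the generalisation to all $\hat{d}ef$ and $\hat{d}e\hat{f}$ (the analogue of Corollaries \ref{cor:genVii} and \ref{cor:genVi}) is obtained by iteration.
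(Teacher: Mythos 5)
Your overall strategy is the same as the paper's (whose entire proof is ``this follows immediately from the above discussion''): inherit the $\hat{a}b$-row, transport the six identity families obtained from Equation \eqref{eq:rowdiff}, and use the uniqueness of $\hat{d}$ to place the one non-uniform column. The difficulty is that the bookkeeping you defer is exactly where the argument breaks, and the consistency check you yourself prescribe fails if you actually run it. From the displayed array of $\Gamma(\hat{a}bc)$ the $a$-column is $(1,0,0)$, so $\overline{\hat{a}ba}=1$; but in both target arrays of the lemma the $a$-column carries a $0$ in the $\hat{a}b$-row, i.e.\ $\overline{\hat{a}ba}=0$. Since, as you correctly note, $\overline{\hat{a}bx}$ depends only on the pair $\hat{a}b$ and the index $x$, these two displays are irreconcilable: no transport of values from $\Gamma(\hat{a}bc)$ can produce the arrays exactly as printed, so asserting that the entries ``come out as claimed'' is not a proof.

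The root cause is that the identities you quote verbatim are themselves partly wrong. Applying Equation \eqref{eq:rowdiff} to the rows $\hat{a}b\ast,\hat{a}c\ast$ of $\hat{a}bc$ with $d=a$ and $e\in I_{\omega}$ gives $0=\overline{\hat{a}ba}+\overline{\hat{a}be}+\overline{ace}-\overline{\hat{a}ca}-\overline{\hat{a}ce}-\overline{abe}=1+\overline{ace}-\overline{abe}$, hence $\overline{abe}=1$ and $\overline{ace}=0$ (not $1$); the rows $\hat{a}b\ast,bc\ast$ then force $\overline{\hat{a}ae}=1$ (not $0$), and symmetrically $\overline{c\hat{c}e}=\overline{b\hat{c}e}=1$ while $\overline{\hat{a}\hat{c}e}=0$. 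In other words the partner pairs actually revealed by $\Gamma(\hat{a}bc)$ are $(a,c)$, $(b,\hat{b})$ and $(\hat{a},\hat{c})$, not $(a,\hat{a})$, $(b,\hat{b})$, $(c,\hat{c})$; the column of $\Gamma(\hat{a}bd)$ with pattern $(0,0,1)$ is indexed by $\hat{c}$, not by $a$ (this is easily confirmed on the model partition $\Pi_{2}$). The lemma is therefore true only after relabelling its special columns, and a correct write-up must either perform that relabelling or rederive the identities; a proof that takes both the quoted identities and the printed arrays at face value cannot close. Two smaller points you patch silently but should flag explicitly: $I_{\gamma}$ is not defined in this subsection (you substitute $U\setminus\{a,b\}$), and in part~2 the third special column cannot be indexed by $\hat{d}$, which is an element of the vertex $\hat{a}b\hat{d}$; it must be $d$.
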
 
\begin{proof}
	This follows immediately from the above discussion.
\end{proof}

\begin{cor}\label{cor:genVIiii}
	For all distinct $d,e,f\in U$, we have $\overline{\hat{d}ef}=1,\overline{\hat{d}e\hat{f}}=1$, and the following nb-arrays:
	\begin{enumerate}
		\item $$\Gamma(\hat{d}ef):
		\underbrace{\begin{array}{c}
			0\\
			0\\
			1
			\end{array}}_{d}
		\underbrace{\begin{array}{c}
			0\\
			1\\
			0
			\end{array}}_{\hat{e}}
		\underbrace{\begin{array}{c}
			1\\
			0\\
			0
			\end{array}}_{\hat{f}}
		\underbrace{\begin{array}{cccc}
			1 & \dots & \dots & 1\\
			1 & \dots & \dots & 1\\
			1 & \dots & \dots & 1
			\end{array}}_{(U\setminus \{d,e,f\})\cup(W\setminus \{\hat{d},\hat{e},\hat{f}\})}
		\begin{array}{c}  
		~\leftarrow \hat{d}e\text{-row} \\
		~\leftarrow \hat{d}f\text{-row} \\
		~\leftarrow ef\text{-row} 
		\end{array}$$
		\item $$\Gamma(\hat{d}e\hat{f}):
		\underbrace{\begin{array}{c}
			0\\
			0\\
			1
			\end{array}}_{\hat{e}}
		\underbrace{\begin{array}{c}
			0\\
			1\\
			0
			\end{array}}_{d}
		\underbrace{\begin{array}{c}
			1\\
			0\\
			0
			\end{array}}_{f}
		\underbrace{\begin{array}{cccc}
			1 & \dots & \dots & 1\\
			1 & \dots & \dots & 1\\
			1 & \dots & \dots & 1
			\end{array}}_{(U\setminus \{d,e,f\})\cup (W\setminus \{\hat{d},\hat{e},\hat{f}\})}
		\begin{array}{c}  
		~\leftarrow \hat{d}e\text{-row} \\
		~\leftarrow e\hat{f}\text{-row} \\
		~\leftarrow \hat{d}\hat{f}\text{-row} 
		\end{array}$$
		
	\end{enumerate}
\end{cor}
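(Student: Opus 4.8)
The plan is to mirror the proofs of Corollaries \ref{cor:genVii} and \ref{cor:genVi}: I would deduce the two general nb-arrays by repeatedly applying Lemma \ref{lem:cVIiiipr}, using the combinatorial symmetry of the matching $M$ to transport the known local structure of the base vertex $\hat{a}bc$ to that of an arbitrary vertex $\hat{d}ef$ (and its companion $\hat{d}e\hat{f}$) with $d,e,f\in U$ distinct. Since we are in the regime $n>14$, the condition $n>8$ needed to run the iteration is automatic.

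First I would start from the nb-array of $\hat{a}bc$ fixed just before Lemma \ref{lem:cVIiiipr}. Part 1 of that lemma changes the third coordinate, producing the nb-array of $\hat{a}bf$ for $f\in U\setminus\{a,b\}$, and part 2 produces the nb-array of $\hat{a}b\hat{f}$. Next I would observe that the defining data of $\hat{a}bc$ --- namely the displayed row equalities together with the quotient matrix $\left(\begin{smallmatrix}3(n-5)&6\\2(n-4)&n-1\end{smallmatrix}\right)$ --- are symmetric in the three matched pairs $(a,\hat a),(b,\hat b),(c,\hat c)$. Hence each instance of Lemma \ref{lem:cVIiiipr} may be re-read with any matched triple playing the role of $(a,\hat a),(b,\hat b),(c,\hat c)$, so I can perform the analogous single-coordinate moves on the first and second coordinates as well. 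Composing a bounded number of such moves carries $\hat{a}bc$ to $\hat{d}ef$ for arbitrary distinct $d,e,f\in U$, and applying part 2 at the final move yields $\hat{d}e\hat{f}$. Formally this is an induction on the number of coordinates in which $\hat{d}ef$ differs from $\hat{a}bc$.

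I expect the main obstacle to be the bookkeeping that keeps each intermediate nb-array inside Case \labelcref{case:VI}(\labelcref{case:VIiii}) rather than letting it collapse into one of the small-$n$ cases already excluded in Table \ref{tab:posnba}. Concretely, after each move I must check that no entry of the all-ones block is secretly $0$; this is controlled by the uniqueness of the matched partner $\hat{d}$ established just before the lemma (a second index with the same zero pattern would force a forbidden repeated single-entry column) together with the column restrictions of Lemma \ref{lem:eqq11}. The one genuinely delicate point is making the permutation symmetry precise, since the labels $a,b,c$ are fixed by the earlier construction; I would handle this by stating explicitly that the hypotheses of Lemma \ref{lem:cVIiiipr} are invariant under relabelling the matched pairs, after which the corollary follows immediately from the repeated application of the lemma.
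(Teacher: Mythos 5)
Your proposal is correct and matches the paper's own argument: the paper likewise proves this corollary by iterated application of Lemma \ref{lem:cVIiiipr} starting from $\hat{a}bc$, exactly as in the analogous Corollaries \ref{cor:genVii} and \ref{cor:genVi}. Your additional care about relabelling the matched pairs and inducting on the number of differing coordinates only makes explicit what the paper's one-line proof leaves implicit.
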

\begin{proof}
	This follows from the repeated application of Lemma \ref{lem:cVIiiipr}. If $n>8$, we can we can always apply the lemma consecutively to $\hat{a}bc$ to attain the nb-array of $\hat{d}ef,\hat{d}e\hat{f}$.
\end{proof}

\begin{proof}[Proof of Theorem \ref{thm:VIiii}]
	Any vertex not considered directly in Corrolary \ref{cor:genVIiii} is of the form $def,d\hat{d}e,\hat{d}\hat{e}\hat{f}$ or $d\hat{d}\hat{f}$ for distinct elements $d,e,f\in U$. The first two appear in the nb-array of $\hat{d}ef$, and the last two appear in the nb-array of $\hat{d}e\hat{f}$. From this we can see a vertex is in $X_1$ if and only if the vertex does not contain a pair $d\hat{d}$ such that $d\in U$. This gives the partition $\Pi_2$ defined by the $M$ presented in Section \ref{sec:knJn3}.  
\end{proof}

\section*{Acknowledgements}
R. J. Evans was supported by the Mathematical
Center in Akademgorodok, under agreement No. 075-15-2019-1613 with the Ministry
of Science and Higher Education of the Russian Federation.

\bibliographystyle{plain}
\bibliography{references.bib}

\begin{thebibliography}{10}

\bibitem{AM_2008}
Sergey~V. Avgustinovich and Ivan~Yu. Mogilnykh.
\newblock {Perfect 2-Colorings of Johnson Graphs J(6,3) and J(7,3)}.
\newblock In {\'A}ngela Barbero, editor, {\em Coding Theory and Applications},
  pages 11--19, Berlin, Heidelberg, 2008. Springer Berlin Heidelberg.

\bibitem{AM_2011}
Sergey~V. Avgustinovich and Ivan~Yu. Mogilnykh.
\newblock {Perfect colorings of the Johnson graphs $J(8, 3)$ and $J(8,4)$ with
  two colors}.
\newblock {\em Journal of Applied and Industrial Mathematics}, 5(1):19--30,
  2011.

\bibitem{BCGG_2019}
Rosemary~A. Bailey, Peter~J. Cameron, Alexander~L. Gavrilyuk, and Sergey~V.
  Goryainov.
\newblock {Equitable partitions of Latin-square graphs}.
\newblock {\em Journal of Combinatorial Designs}, 27(3):142--160, 2019.

\bibitem{BCN_1989}
Andries~E. Brouwer, Arjeh~M. Cohen, and Arnold Neumaier.
\newblock {\em {Distance-Regular Graphs}}.
\newblock Springer-Verlag, Berlin, 1989.

\bibitem{D_1975}
Phillipe Delsarte.
\newblock {An algebraic approach to the association schemes of coding theory}.
\newblock {\em Philips Research Reports. Supplements}, 10:143--161, 1973.

\bibitem{Ihringer}
Yuval Filmus and Ferdinand Ihringer.
\newblock {Boolean degree 1 functions on some classical association schemes}.
\newblock {\em Journal of Combinatorial Theory, Series A}, 162:241--270, 2019.

\bibitem{F_2007}
Dmitrii~G. Fon-Der-Flaass.
\newblock {Perfect 2-colorings of a hypercube}.
\newblock {\em Siberian Mathematical Journal}, 48(4):740--745, 2007.

\bibitem{GG_2013}
Alexander~L. Gavrilyuk and Sergey~V. Goryainov.
\newblock {On Perfect $2$-Colorings of Johnson Graphs $J(v, 3)$}.
\newblock {\em Journal of Combinatorial Designs}, 21(6):232--252, 2013.

\bibitem{M_1992}
William~J. Martin.
\newblock {\em {Completely regular subsets}}.
\newblock PhD thesis, University of Waterloo, 1992.

\bibitem{M_1994}
William~J. Martin.
\newblock {Completely regular designs of strength one}.
\newblock {\em Journal of Algebraic Combinatorics}, 3(2):170--185, 1994.

\bibitem{M_2003}
Aaron~D. Meyerowitz.
\newblock {Cycle-balanced partitions in distance-regular graphs}.
\newblock {\em Discrete Mathematics}, 264:149--165, 2003.

\bibitem{M_2007}
Ivan Mogilnykh.
\newblock {On the regularity of perfect 2-colorings of the Johnson graph}.
\newblock {\em Problems of Information Transmission}, 43(4):303--309, 2007.

\bibitem{MV_2020}
Ivan Mogilnykh and Alexandr Valyuzhenich.
\newblock {Equitable 2-partitions of the Hamming graphs with the second
  eigenvalue}.
\newblock {\em Discrete Mathematics}, 343(11):112039, 2020.

\bibitem{N_1982}
Arnold Neumaier.
\newblock {Regular sets and quasi-symmetric $2$-designs}.
\newblock In Dieter Jungnickel and Klaus Vedder, editors, {\em Combinatorial
  Theory}, pages 258--275, Berlin, Heidelberg, 1982. Springer Berlin
  Heidelberg.

\bibitem{V_2020}
Konstantin Vorob{'}ev.
\newblock Equitable 2-partitions of {J}ohnson graphs with the second
  eigenvalue.
\newblock 2020.
\newblock arXiv:2003.10956 [math.CO].

\bibitem{VMV_2018}
Konstantin Vorob’ev, Ivan Mogilnykh, and Alexandr Valyuzhenich.
\newblock { Minimum supports of eigenfunctions of Johnson graphs}.
\newblock {\em Discrete Mathematics}, 341(8):2151--2158, 2018.

\end{thebibliography}
	
\end{document}